\def\C{\mathbb{C}}
\def\R{\mathbb{R}}
\def\Z{\mathbb{Z}}
\def\H{\mathbb{H}}
\def\P{\mathcal{P}}
\def\A{\mathcal{A}}
\def\D{\mathcal{D}}
\def\E{\mathcal{E}}
\def\cc{\mathcal{C}}
\def\z{\mathcal{Z}}
\def\stab{\mathrm{Stab}}
\def\im{\mathop{\mathcal{I}m}}
\def\CP{\mathbb{C}\mathrm{P}}
\def\T{\mathscr{T}}
\def\rep{\mathop{\rm rep}\nolimits}
\def\hom{\mathop{\rm Hom}\nolimits}
\def\ext{\mathop{\rm Ext}\nolimits}
\def\aut{\mathop{\rm Aut}\nolimits}
\theoremstyle{definition}
\newtheorem{df}{Definition}[section]
\theoremstyle{plain}
\newtheorem{thm}[df]{Theorem}
\newtheorem{prop}[df]{Proposition}
\newtheorem{lem}[df]{Lemma}
\newtheorem{cor}[df]{Corollary}
\title{The space of stability conditions for quivers with two vertices}
\author{Takahisa Shiina}
\address{Academic Support Center, Kogakuin University}
\email{kt13423@ns.kogakuin.ac.jp}
\begin{document}

\begin{abstract}
 The purpose of this article is to study the space of stability
 conditions $\stab(P_n)$ on the bounded derived category
 $\D^b(P_n)$ of finite dimensional representations of the quiver $P_n$
 with two vertices and $n$ parallel arrows.
 There is a local homeomorphism $\z:\stab(P_n)\rightarrow\C^2$.
 We show that, when the number of arrows is one or two, $\z$ is a
 covering map if we restrict it to the complement of a line arrangement.
 When the number of arrows is greater than two, we need to remove
 uncountably many lines to obtain a covering map.
\end{abstract}

\maketitle

\section{Introduction}

T.~Bridgeland introduced the notion of stability conditions on
triangulated categories (\cite{bri}).
The idea comes from Douglas's work on $\pi$-stability for D-branes in
string theory (\cite{douc}).

The main results of Bridgeland's paper are as follows:
The set of all (locally finite) stability conditions $\stab(\T)$ on a
triangulated category $\T$ has a topology.
Each connected component $\Sigma\subset\stab(\T)$ is equipped with a
local homeomorphism $\z$ to a certain topological vector space
$V(\Sigma)$ (\cite[Theorem~1.2]{bri}).
Moreover $\stab(\T)$ becomes a (possibly infinite-dimensional)
manifold.

\subsection{Background}
It is an important problem to show simply connectivity of $\Sigma$,
connectivity of $\stab(\T)$, or (universal) covering map property
of the local homeomorphism $\z:\Sigma\rightarrow V(\Sigma)$.
It have been studied intensively, in particular for
$\stab(X)=\stab(\D^b(\mathrm{coh}(X)))$, the space of stability
conditions on the bounded derived category of coherent sheaves on $X$.
For example, if $X$ is a K3 surface, Bridgeland proved that there is a
connected component $\stab^\dagger(X)\subset\stab(X)$ and a covering map
$\pi:\stab^\dagger(X)\rightarrow\P_0^+(X)$ (\cite[Theorem~1.1]{bric}).
He proved also that if $X$ is an elliptic curve, $\stab(X)$ is
connected, the image of $\z:\stab(X)\rightarrow\C^2$ is
$\mathrm{GL}^+(2,\R)$ and $\z$ is a universal covering map
(\cite[Section~9]{bri}).
E.~Macr{\`{\i}} proved that if $X$ is a curve of genus $\geq 2$,
$\stab(X)$ also is isomorphic to $\widetilde{\mathrm{GL}^+(2,\R)}$, the
universal cover of $\mathrm{GL}^+(2,\R)$ (\cite[Theorem~2.7]{mac}).
S.~Okada proved that $\stab(\mathbb{P}^1)$ is isomorphic to $\C^2$ as a
complex manifold (\cite[Theorem~1.1]{oka}).

The author was moved by Bridgeland's work on the stability conditions
for Kleinian singularities (\cite{brib}).
Let $A$ be a preprojective algebra of the ADE-graph $\Gamma$ and let
$\D=\D^b(\mathrm{mod}A)$.
He proved that there is a connected component
$\stab^\dagger(\D)\subset\stab(\D)$ which is a covering space of
$\mathfrak{h}^{\mathrm{reg}}$.
Here $\mathfrak{h}$ is the Cartan subalgebra of the simple Lie algebra
corresponding to $\Gamma$ and $\mathfrak{h}^{\mathrm{reg}}$ is the
complement of root hyperplanes in $\mathfrak{h}$.
The simply connectivity of $\stab^\dagger(\D)$ follows from
P.~Seidel and R.~Thomas \cite{st} in type A (see also \cite{tho}) and
C.~Brav and H.~Thomas \cite{bt} in type D and E.
The connectivity of $\stab(\D)$ follows from
A.~Ishii, K.~Ueda, and H.~Uehara \cite{iuu} in type A.

\subsection{Motivation and main results}
It is an important problem to find hyperplanes
$\{H_\lambda\}_{\lambda\in\Lambda}$ in $V(\Sigma)$ such that the
restriction of $\z$ to the complement
$V(\Sigma)-\cup_{\lambda\in\Lambda}H_\lambda$ is a covering map.
At first, the author thought that $\z:\Sigma\rightarrow V(\Sigma)$ was a
covering map without restricting $\Sigma$.
However the author found a inconsistency when the author examined
it for $\stab(P_n)$,
the space of stability conditions on the bounded derived category of
finite dimensional representations of a quiver $P_n$ with two vertices
and $n$ parallel arrows.

In \cite{mac}, Macr{\`{\i}} already analyzed $\stab(P_n)$; he described
coordinate neighborhoods of it, and he proved that it is a connected and
simply connected $2$-dimensional complex manifold
(\cite[Theorem~4.5]{mac}).
Although he did not refer to $\z$, it is straightforward to see the
image of $\z:\stab(P_n)\rightarrow\C^2$ is $\C^2\setminus\{(0,0)\}$ and
to see how $\z$ covers.
Nevertheless we find an interesting behavior if we collapse them by
$\C$-action.

The action of $\C$ on $\stab(P_n)$ is a part of the
$\widetilde{\mathrm{GL}^+(2,\R)}$-action
(cf. \cite{bri}, \cite[Definition~2.3]{oka}).
The quotient space is given as a corollary of \cite{mac}:

\begin{cor}
 \label{maincor}
 The quotient space $\stab(P_n)/\C$ is homeomorphic to
 \begin{itemize}
  \item $\C\setminus\{ yi\,|\,y\leq 0 \}$ if $n=1$ or
  \item $\C\setminus\bigcup_{k\in\Z}%
\left\{\left. x_k+yi\,\right|\,y\leq 0 \right\} \cup%
\left\{ x+yi\,\left|\,b_n\leq x\leq c_n,\,y<0 \right.\right\}$
       if $n\geq 2$.
 \end{itemize}
 These are the $1$-dimensional complex manifolds given by gluing three,
 if $n=1$, or countable, if $n\geq 2$, upper half planes $\H_k$.
 Here $x_k=\log\frac{a_k}{a_{k+1}}$ for all $k\in\Z$,
 $b_n=\log\frac{n-\sqrt{n^2-4}}{2}$,
 $c_n=\log\frac{n+\sqrt{n^2-4}}{2}$,
 and $\H_k$'s are copies of $\H=\{\,z\in\C\,|\,\im z>0\,\}$.
 (Figure~\ref{c1} and \ref{cn}.)
\end{cor}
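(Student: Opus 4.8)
The plan is to read the statement off from Macr\`i's atlas for $\stab(P_n)$ in \cite[Theorem~4.5]{mac} by performing the $\C$-quotient chart by chart. First I would recall the shape of that atlas. Starting from the standard heart $\rep P_n$, with simple objects $S_1,S_2$, and the exceptional pair $(S_1,S_2)$, iterated mutation produces a family of exceptional pairs $(E_k,E_{k+1})$: when $n=1$ the quiver $P_1$ is the $A_2$-quiver, there are only three indecomposable --- hence exceptional --- modules up to shift and the family is periodic of period three; when $n\ge 2$ the exceptional objects are the real-root modules (the preprojective and preinjective modules together with $S_1,S_2$) and the family is indexed by $k\in\Z$. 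To $(E_k,E_{k+1})$ corresponds a coordinate neighbourhood $U_k\subset\stab(P_n)$ on which a stability condition $\sigma$ is recorded by $(Z_\sigma(E_k),Z_\sigma(E_{k+1}))\in(\C^\ast)^2$, with $U_k$ cut out by Macr\`i's explicit inequalities on the phases $\phi_\sigma(E_k),\phi_\sigma(E_{k+1})$; since the $U_k$ cover $\stab(P_n)$, it suffices to quotient each $U_k$ by $\C$ and to understand the overlaps.

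Second I would compute $U_k/\C$. The $\C$-action of \cite[Definition~2.3]{oka} is free, and since $Z_\sigma(E_{k+1})\ne 0$ on $U_k$ it admits there a canonical holomorphic slice, obtained by normalising $Z_\sigma(E_{k+1})$; on this slice $U_k/\C$ is the set of those $Z_\sigma(E_k)\in\C^\ast$ satisfying Macr\`i's inequalities, and a short computation identifies this set, up to biholomorphism, with the open upper half plane $\H$. Writing $\zeta_k=\log Z_\sigma(E_k)-\log Z_\sigma(E_{k+1})$ for a single-valued branch of the logarithm on $U_k/\C$, whose imaginary part is $\pi$ times the phase difference $\phi_\sigma(E_k)-\phi_\sigma(E_{k+1})$, I realise $U_k/\C$ as a half plane $\H_k$, with the wall where the chart degenerates --- where the two phases coincide and $Z_\sigma(E_k)/Z_\sigma(E_{k+1})$ becomes a positive ratio $a_k/a_{k+1}$ --- appearing as the boundary ray of $\H_k$ issuing downward from the point $x_k=\log(a_k/a_{k+1})$ of the real axis.

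Third I would glue. Consecutive charts $U_k,U_{k+1}$ overlap exactly on the locus where $E_k,E_{k+1},E_{k+2}$ are all semistable; \cite{mac} records the transition function there, and on passing to the quotient it becomes an explicit biholomorphism --- a M\"obius transformation in the exponentiated coordinates --- matching collars of the common boundary ray. The mutation triangles express, after tracking shifts, $Z_\sigma(E_{k+2})=nZ_\sigma(E_{k+1})-Z_\sigma(E_k)$, so that the quotient transition reads $\zeta_{k+1}=-\log(n-e^{\zeta_k})$, and matching consecutive walls forces $x_k=\log(a_k/a_{k+1})$ with the recursion $a_{k+1}=na_k-a_{k-1}$. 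Gluing all the $\H_k$ along these boundary rays produces a $1$-dimensional complex manifold, which I would finally identify with the stated planar domain: the interiors of the $\H_k$ fill out the whole upper half plane; each ray $\{x_k+iy:y\le 0\}$ is a genuine wall and is deleted; and since the characteristic roots of $a_{k+1}=na_k-a_{k-1}$ are the roots $\frac{n\pm\sqrt{n^2-4}}{2}=e^{c_n},e^{b_n}$ of $t^2-nt+1$, the walls $x_k$ accumulate precisely at $c_n$ and $b_n$, so the vertical half-strip $\{b_n\le x\le c_n,\ y<0\}$ --- exactly the image of the locus of stability conditions carrying a semistable object of imaginary-root class, empty when $n=1$ and a single ray when $n=2$ since then $b_n=c_n=0$ --- is never covered. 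For $n=1$ the period-three gluing of three half planes yields $\C\setminus\{yi:y\le 0\}$ in the same way. (As a check, the resulting domains are simply connected, as they must be: $\stab(P_n)$ is simply connected by \cite[Theorem~4.5]{mac} and $\stab(P_n)\to\stab(P_n)/\C$ is a fibre bundle with contractible fibre $\C$.)

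The step I expect to be the main obstacle is the bookkeeping at the boundaries: one must extract from Macr\`i's local model the \emph{exact} inequalities defining each $U_k$ and each overlap $U_k\cap U_{k+1}$, and keep track of the shifts near the ``seam'' charts carrying the simple modules, so as to be certain that $U_k/\C$ is precisely a half plane with precisely the ray over $x_k$ removed, that the quotient transitions glue consecutive half planes along that ray with the $x_k$ consistently placed, and that nothing is covered twice and nothing beyond the rays over the $x_k$ and the half-strip over $[b_n,c_n]$ must be deleted; granted this, the elementary verification that the $a_k$-recursion has characteristic polynomial $t^2-nt+1$ then pins down $b_n$ and $c_n$.
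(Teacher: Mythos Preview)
Your proposal is correct and follows essentially the same route as the paper: the paper likewise quotients each Macr\`{\i} chart $\Theta_k\cong\C\times\H$ by the free $\C$-action on the first factor to obtain $\H_k$, computes the transition maps $\psi_k$ as M\"obius transformations in the exponentiated coordinate (Theorems~\ref{thmofccn} and~\ref{defofpsik}), and reads off the planar model from the accumulation of the $x_k$ at $b_n,c_n$. The only cosmetic differences are that the paper uses the coordinate $w=\log\bigl(Z(S_{k+1})/Z(S_k)\bigr)$ rather than your reciprocal $\zeta_k=\log\bigl(Z(E_k)/Z(E_{k+1})\bigr)$, and glues every $\H_k$ directly to $\H_0$ rather than chaining consecutive charts via your recursion $\zeta_{k+1}=-\log(n-e^{\zeta_k})$.
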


\begin{figure}
{\footnotesize
\begin{center}
\begin{picture}(350,70)(0,-10)
 \dottedline{2}(30,1)(30,50)
 \put(30,0){\circle{2}}
 \dottedline{2}(0,30)(60,30)
 \dottedline{2}(0,0)(60,0)
 \put(32,32){\makebox(0,0)[bl]{$\pi i$}}
 \put(30,55){\makebox(0,0)[b]{$\H_1$}}
 \put(30,-3){\makebox(0,0)[t]{$0$}}
 \put(0,15){\line(1,0){30}}
 \qbezier(30,15)(43,13)(45,0)
 \put(45,0){\vector(1,-5){0}}%
 {\color{red}
 \qbezier(10,30)(10,22)(30,22)
 \qbezier(30,22)(50,20)(50,0)
 \put(50,0){\vector(0,-1){0}}
 }%
 {\color{magenta}
 \qbezier(45,30)(47,22)(50,20)
 \qbezier(50,20)(55,15)(55,0)
 \put(55,0){\vector(0,-1){0}}
 }%
 {\color{blue}
 \qbezier(10,0)(10,10)(30,10)
 \qbezier(30,10)(40,10)(40,0)
 \put(40,0){\vector(1,-5){0}}
 }%
 {\color{cyan}
 \qbezier(20,0)(20,5)(30,5)
 \qbezier(30,5)(35,5)(35,0)
 \put(35,0){\vector(1,-3){0}}
 }%
 \put(90,15){\vector(-1,0){20}}
 \put(80,20){\makebox(0,0)[b]{$\psi_1$}}
 \dottedline{2}(100,0)(160,0)
 \put(130,0){\circle{2}}
 \dottedline{2}(130,1)(130,50)
 \dottedline{2}(100,30)(160,30)
 \put(132,32){$\pi i$}
 \put(130,55){\makebox(0,0)[b]{$\H_0$}}
 \put(130,-3){\makebox(0,0)[t]{$0$}}
 \put(130,1){\vector(0,1){29}}%
 {\color{magenta} \put(110,0){\vector(0,1){30}}}%
 {\color{red} \put(120,0){\vector(0,1){30}}}%
 {\color{blue} \put(140,0){\vector(0,1){30}}}%
 {\color{cyan} \put(150,0){\vector(0,1){30}}}%
 \put(170,15){\vector(1,0){20}}
 \put(180,20){\makebox(0,0)[b]{$\psi_2$}}
 \dottedline{2}(200,0)(260,0)
 \put(230,0){\circle{2}}
 \dottedline{2}(200,30)(260,30)
 \dottedline{2}(230,1)(230,50)
 \put(232,32){\makebox(0,0)[lb]{$\pi i$}}
 \put(230,55){\makebox(0,0)[b]{$\H_2$}}
 \put(230,15){\line(1,0){30}}
 \qbezier(215,0)(217,13)(230,15)
 \put(215,0){\vector(-1,-5){0}}%
 {\color{blue}
 \qbezier(250,30)(250,22)(230,22)
 \qbezier(230,22)(210,20)(210,0)
 \put(210,0){\vector(0,-1){0}}
 }%
 {\color{cyan}
 \qbezier(215,30)(213,22)(210,20)
 \qbezier(210,20)(205,15)(205,0)
 \put(205,0){\vector(0,-1){0}}
 }%
 {\color{red}
 \cbezier(220,0)(220,13)(250,13)(250,0)
 \put(220,0){\vector(-1,-5){0}}
 }%
 {\color{magenta}
 \cbezier(225,0)(225,7)(240,7)(240,0)
 \put(225,0){\vector(-1,-3){0}}
 }%
 \dashline{5}(280,-10)(280,70)
 \dottedline{2}(300,25)(350,25)
 \dottedline{2}(300,40)(350,40)
 \drawline(325,0)(325,25)
 \put(325,60){\makebox(0,0){$\H_0$}}
 \put(310,0){\makebox(0,0)[b]{$\H_1$}}
 \put(340,0){\makebox(0,0)[b]{$\H_2$}}
 \drawline(295,58)(302,58)(302,65)
 \put(300,60){\makebox(0,0)[rb]{$\C$}}
 \put(325,25){\circle*{2}}
 \put(325,27){\makebox(0,0)[b]{$0$}}
 \put(325,40){\circle{2}}
 \put(325,42){\makebox(0,0)[b]{$\pi i$}}
 \multiput(305,25)(10,0){3}{\line(1,1){15}}
 \put(300,30){\line(1,1){10}}
 \put(335,25){\line(1,1){15}}
\end{picture}
\end{center}
}
\caption{$\stab(P_1)/\C$} \label{c1}
\end{figure}

\begin{figure}
{\footnotesize
\begin{center}
\begin{picture}(320,80)(0,-20)
 \dottedline{2}(0,0)(70,0) \dottedline{2}(60,0)(60,50)
 \dottedline{2}(0,30)(70,30) \put(35,45){\makebox(0,0)[b]{$\H_0$}}
 \put(20,0){\circle{2}}
 \put(25,-4){\makebox(0,0)[rt]{$\log\frac{a_{k-1}}{a_k}$}}
 \put(40,0){\circle{2}}
 \put(35,-4){\makebox(0,0)[lt]{$\log\frac{a_k}{a_{k+1}}$}}
 \put(62,2){\makebox(0,0)[lb]{$0$}}
 \put(62,32){\makebox(0,0)[lb]{$\pi i$}}
 {\color{magenta} \put(10,0){\vector(0,1){30}}}%
 {\color{red} \put(20,1){\vector(0,1){29}}}%
 \put(30,0){\vector(0,1){30}}%
 {\color{blue} \put(40,1){\vector(0,1){29}}}%
 {\color{cyan} \put(50,0){\vector(0,1){30}}}%
 \put(80,15){\vector(1,0){20}} \put(90,20){\makebox(0,0)[b]{$\psi_k$}}
 \dottedline{2}(110,0)(180,0) \dottedline{2}(110,30)(180,30)
 \dottedline{2}(120,0)(120,50)
 \put(140,45){\makebox(0,0)[b]{$\H_k$}}
 \put(118,2){\makebox(0,0)[rb]{$0$}}
 \put(118,32){\makebox(0,0)[rb]{$\pi i$}}
 \put(140,0){\circle{2}}
 \put(145,-4){\makebox(0,0)[rt]{$\log\frac{a_{k+1}}{a_k}$}}
 \put(160,0){\circle{2}}
 \put(155,-4){\makebox(0,0)[lt]{$\log\frac{a_k}{a_{k-1}}$}}
 \put(150,30){\vector(0,-1){30}}%
 {\color{red}
 \put(180,15){\line(-1,0){10}}
 \qbezier(170,15)(153,15)(153,0)
 \put(153,0){\vector(0,-1){0}}
 }%
 {\color{magenta}
 \cbezier(156,0)(156,10)(170,10)(170,0)
 \put(156,0){\vector(-1,-3){0}}
 }%
 {\color{blue}
 \put(110,15){\line(1,0){20}}
 \qbezier(130,15)(147,15)(147,0)
 \put(147,0){\vector(0,-1){0}}
 }%
 {\color{cyan}
 \qbezier(125,0)(135,14)(144,0)
 \put(144,0){\vector(1,-3){0}}
 }%
 \dashline{5}(200,-20)(200,60)
 \put(220,60){\makebox(0,0)[lt]{$(n=2)$}}
 \dottedline{2}(220,30)(320,30)
 \dottedline{2}(220,20)(320,20)
 \multiput(220,20)(10,0){10}{\line(1,1){10}}
 \drawline(270,20)(270,-20)
 \put(270,22){\makebox(0,0)[b]{$0$}}
 \put(270,32){\makebox(0,0)[b]{$\pi i$}}
 \put(270,30){\circle{2}}
 \put(270,50){\makebox(0,0){$\H_0$}}
 \drawline(230,-20)(230,20)
 \put(223,-15){\makebox(0,0){$\H_1$}}
 \drawline(250,-20)(250,20)
 \put(240,-15){\makebox(0,0){$\H_2$}}
 \put(260,-15){\makebox(0,0){$\cdots$}}
 \drawline(310,-20)(310,20)
 \put(320,-15){\makebox(0,0){$\H_{-1}$}}
 \drawline(290,-20)(290,20)
 \put(300,-15){\makebox(0,0){$\H_{-2}$}}
 \put(280,-15){\makebox(0,0){$\cdots$}}
\end{picture}
\\[3ex]
\begin{picture}(320,80)(0,-20)
 \dottedline{2}(0,0)(70,0) \dottedline{2}(0,30)(70,30)
 \dottedline{2}(10,0)(10,50) \put(35,45){\makebox(0,0)[b]{$\H_0$}}
 \put(30,0){\circle{2}}
 \put(35,-4){\makebox(0,0)[rt]{$\log\frac{a_{k+1}}{a_k}$}}
 \put(50,0){\circle{2}}
 \put(45,-4){\makebox(0,0)[lt]{$\log\frac{a_k}{a_{k-1}}$}}
 \put(8,2){\makebox(0,0)[rb]{$0$}}
 \put(8,32){\makebox(0,0)[rb]{$\pi i$}}
 {\color{magenta} \put(20,0){\vector(0,1){30}}}%
 {\color{red} \put(30,1){\vector(0,1){29}}}%
 \put(40,0){\vector(0,1){30}}%
 {\color{blue} \put(50,1){\vector(0,1){29}}}%
 {\color{cyan} \put(60,0){\vector(0,1){30}}}%
 \put(80,15){\vector(1,0){20}}
 \put(90,20){\makebox(0,0)[b]{$\psi_{-k}$}}
 \dottedline{2}(110,0)(180,0) \dottedline{2}(110,30)(180,30)
 \dottedline{2}(170,0)(170,50)
 \put(145,45){\makebox(0,0)[b]{$\H_{-k}$}}
 \put(172,2){\makebox(0,0)[lb]{$0$}}
 \put(172,32){\makebox(0,0)[lb]{$\pi i$}}
 \put(130,0){\circle{2}}
 \put(135,-4){\makebox(0,0)[rt]{$\log\frac{a_{k-1}}{a_k}$}}
 \put(150,0){\circle{2}}
 \put(145,-4){\makebox(0,0)[lt]{$\log\frac{a_k}{a_{k+1}}$}}
 \put(140,30){\vector(0,-1){30}}%
 {\color{red}
 \put(180,15){\line(-1,0){20}}
 \qbezier(160,15)(143,15)(143,0)
 \put(143,0){\vector(0,-1){0}}
 }%
 {\color{magenta}
 \cbezier(146,0)(146,10)(160,10)(160,0)
 \put(146,0){\vector(-1,-3){0}}
 }%
 {\color{blue}
 \put(110,15){\line(1,0){10}}
 \qbezier(120,15)(137,15)(137,0)
 \put(137,0){\vector(0,-1){0}}
 }%
 {\color{cyan}
 \qbezier(115,0)(125,14)(134,0)
 \put(134,0){\vector(1,-3){0}}
 }%
 \dashline{5}(200,-20)(200,60)
 \put(220,60){\makebox(0,0)[lt]{$(n>2)$}}
 \dottedline{2}(220,30)(320,30)
 \dottedline{2}(220,20)(320,20)
 \multiput(220,20)(10,0){10}{\line(1,1){10}}
 \put(270,22){\makebox(0,0)[b]{$0$}}
 \put(270,20){\circle*{2}}
 \drawline(265,-20)(265,20)(275,20)(275,-20)
 \multiput(265,-15)(0,5){8}{\line(1,0){10}}
 \put(270,32){\makebox(0,0)[b]{$\pi i$}}
 \put(270,30){\circle{2}}
 \put(270,50){\makebox(0,0){$\H_0$}}
 \drawline(230,-20)(230,20)
 \put(223,-15){\makebox(0,0){$\H_1$}}
 \drawline(250,-20)(250,20)
 \put(240,-15){\makebox(0,0){$\H_2$}}
 \put(257,-15){\makebox(0,0){$\cdots$}}
 \drawline(310,-20)(310,20)
 \put(320,-15){\makebox(0,0){$\H_{-1}$}}
 \drawline(290,-20)(290,20)
 \put(300,-15){\makebox(0,0){$\H_{-2}$}}
 \put(283,-15){\makebox(0,0){$\cdots$}}
\end{picture}
\end{center}
}
 \caption{$\stab(P_n)/\C$} \label{cn}
\end{figure}

The sequence $\{a_k\}$ (Definition \ref{defofak}) satisfies
$a_{-k}=-a_k$,
\begin{align*}
 0&=\frac{a_0}{a_1}<\frac{a_1}{a_2}<\cdots<\frac{a_{k-1}}{a_k}<\cdots
  \xrightarrow{k\to\infty} \frac{n-\sqrt{n^2-4}}{2}, \text{ and } \\
 \infty&=\frac{a_1}{a_0}>\frac{a_2}{a_1}>\cdots>\frac{a_k}{a_{k-1}}>\cdots
  \xrightarrow{k\to\infty} \frac{n+\sqrt{n^2-4}}{2}.
\end{align*}
Note that if $n=2$ then $a_k=k$ and fractions converge to $1$.
In the right side figures of Figure~\ref{c1} and \ref{cn}, illustrating
$\stab(P_n)/\C$, the solid vertical lines are $\{\,x_k+yi\,\}$
and the area
\begin{picture}(10,10)(0,2)
 \drawline(0,0)(0,10)(10,10)(10,0)
 \drawline(0,3)(10,3)
 \drawline(0,6)(10,6)
\end{picture}
($n>2$) is $\{ x+yi\,|\,b_n\leq x\leq c_n \}$.
In addition, the area
\begin{picture}(20,10)(0,2)
 \dottedline{2}(0,0)(20,0)
 \dottedline{2}(0,10)(20,10)
 \multiput(0,0)(10,0){2}{\line(1,1){10}}
\end{picture}
is the intersection of $\H_k$'s.
In the left side figures, all $\psi_k$'s, defined in Theorem
\ref{thmofccn} and \ref{defofpsik}, are homeomorphisms on
$\{ x+yi\,|\,0<y<\pi \}$.
It moves a vertical line in $\H_0$ to a curve in $\H_k$ as illustrated;
the vectors in $\H_0$ corresponds to the same colored vectors in $\H_k$
by $\psi_k$ preserving the direction.

\begin{thm}
 \label{main1}
 The local homeomorphism $\z$ induces
 $\chi_n:\stab(P_n)/\C\rightarrow\CP^1$, which
 \begin{itemize}
  \item maps the ``real axis'',
	$\{ x+0i\,|\,x\in\R \}\subset\overline{\H_0}$, to
	$\{ [1:\lambda]\,|\,\lambda>0 \}\subset\CP^1$,
  \item maps the ``$\pi$ axis'', $\{ x+\pi i\,|\,x\in\R \}\subset\H_0$, to
	$\{ [1:\mu]\,|\,\mu<0 \}\subset\CP^1$,
  \item wraps $\H_0$ on $\CP^1$ except $[0:1]$ and $[1:0]$,
  \item maps the real axis of $\overline{\H_k}$ to the longer arc from
	$[a_{k+1}:a_k]$ to $[a_k:a_{k-1}]$,
  \item maps the $\pi$ axis of $\H_k$ to the shorter arc from
	$[a_{k+1}:a_k]$ to $[a_k:a_{k-1}]$, and
  \item wraps $\H_k$ on $\CP^1$ except $[a_{k+1}:a_k]$ and
	$[a_k:a_{k-1}]$.
 \end{itemize}
 (Figure~\ref{chi1} and \ref{chi2}.)
 Moreover the minimal subset of $\CP^1$ which, after the removal,
 makes $\chi_n$ into a covering map is:
 \begin{itemize}
  \item $\{[0:1],\,[1:0],\,[1:1]\}$ if $n=1$,
  \item $\{[k:k+1]\, (k\in\Z),\,[1:1]\}$ if $n=2$, or
  \item $\left\{[a_k:a_{k+1}]\, (k\in\Z),\,[1:\lambda]\,%
\left(\frac{n-\sqrt{n^2-4}}{2}\leq\lambda\leq\frac{n+\sqrt{n^2-4}}{2}%
\right)\right\}$ if $n>2$.
 \end{itemize}
\end{thm}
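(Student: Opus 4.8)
The plan is to produce $\chi_n$ from $\z$ by taking $\C$-quotients, to compute it in the coordinate charts of Corollary~\ref{maincor}, and then to locate the points of $\CP^1$ over which it fails to be a covering. First, the $\C$-action on $\stab(P_n)$ is the restriction of the $\widetilde{\mathrm{GL}^+(2,\R)}$-action, under which $\z$ is equivariant for the linear action $z\cdot Z=e^{-i\pi z}Z$ on $\C^2\cong\hom(K_0(\D^b(\rep P_n)),\C)$. Since $z\mapsto e^{-i\pi z}$ maps $\C$ onto $\C^{*}$, the induced action on $\C^2\setminus\{0\}$ has quotient $\CP^1$; as the image of $\z$ is $\C^2\setminus\{(0,0)\}$ (as observed above, after \cite{mac}), $\z$ descends to
\[
 \chi_n\colon\stab(P_n)/\C\longrightarrow(\C^2\setminus\{0\})/\C^{*}=\CP^1,\qquad[\sigma]\mapsto[\,Z_\sigma(S_0):Z_\sigma(S_1)\,],
\]
a local homeomorphism, indeed locally biholomorphic because $\z$ is; here $S_0,S_1$ denote the simple representations.

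Next I would write $\chi_n\circ\psi_k$ explicitly on each chart $\psi_k\colon\{\,x+yi\mid0<y<\pi\,\}\to\stab(P_n)/\C$ of Theorems~\ref{thmofccn} and \ref{defofpsik}. On the base chart $\H_0$ (heart $\rep P_n$) a point is determined up to $\C$ by $[Z(S_0):Z(S_1)]$, and the defining inequalities of the chart cut out precisely which classes occur and with what multiplicity: the line $y=0$ goes to $\{[1:\lambda]\mid\lambda>0\}$, the line $y=\pi$ to $\{[1:\mu]\mid\mu<0\}$, and the open strip wraps onto $\CP^1\setminus\{[1:0],[0:1]\}$, the two omitted classes being those on which $Z(S_1)$, resp.\ $Z(S_0)$, would vanish. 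For $k\neq0$ one invokes the $\mathrm{GL}(2,\Z)$-relation between $\H_k$ and $\H_0$ recorded in Theorems~\ref{thmofccn} and \ref{defofpsik}, governed by the dimension vectors $(a_{k+1},a_k)$ and $(a_k,a_{k-1})$ of Definition~\ref{defofak}: it amounts to post-composing the $\H_0$-picture with a M\"obius transformation of $\CP^1$, which carries the two omitted classes to $[a_{k+1}:a_k]$ and $[a_k:a_{k-1}]$ and the images of the two boundary lines to the complementary arcs joining them, the sign inequalities listed after Corollary~\ref{maincor} deciding which arc is the longer one. The maps $\psi_k$ make these local pictures agree on overlaps, and the six itemised assertions follow.

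Write $R_n\subset\CP^1$ for the subset displayed in the statement. A local homeomorphism restricts to a covering over an open $U\subseteq\CP^1$ exactly when every point of $U$ has an evenly covered neighbourhood, and I would verify this using the charts $\H_k$. If $p\in\CP^1\setminus R_n$ then $p$ is not an accumulation point of the discrete family $\{[a_k:a_{k+1}]\}_{k\in\Z}$ (it is this fact that forces $[1:1]$ into $R_2$, and the whole closed arc $\{[1:\lambda]\mid\frac{n-\sqrt{n^2-4}}2\le\lambda\le\frac{n+\sqrt{n^2-4}}2\}$ --- the ``imaginary root'' direction --- into $R_n$ for $n>2$); hence a small disk $D\ni p$ can be shrunk to contain no point of $R_n$ and no omitted class of any chart, over such a $D$ no pinching of $\chi_n$ occurs, and, using the explicit charts and their gluing in Corollary~\ref{maincor}, $\chi_n^{-1}(D)$ is a disjoint union of homeomorphic copies of $D$; thus $\chi_n$ restricts to a covering over $\CP^1\setminus R_n$. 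For minimality, no point of $R_n$ can be restored: if $p=[a_k:a_{k+1}]$ is an exceptional class then $p$ is an omitted class of a suitable chart $\H_j$, and the component of $\chi_n^{-1}(D\setminus\{p\})$ lying in $\H_j$ is a half-infinite strip mapped onto the punctured disk by a covering map (of exponential type) that does not extend over $p$, so $D$ is not evenly covered; and if $p$ is an accumulation point of those classes ($p=[1:1]$ for $n=2$, or any point of the arc for $n>2$), then every neighbourhood of $p$ in $\stab(P_n)/\C$ meets the images of infinitely many charts $\H_k$ with $|k|$ large, the local-homeomorphism neighbourhoods of the corresponding preimage points shrinking to $0$, so again no neighbourhood of $p$ is evenly covered. (For $n=1$ the family of exceptional classes is finite with no accumulation, and only the first mechanism occurs.)

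The part I expect to be delicate is the minimality for $n\ge3$ along the \emph{interior} $\{[1:\lambda]\mid\frac{n-\sqrt{n^2-4}}2<\lambda<\frac{n+\sqrt{n^2-4}}2\}$ of the ``imaginary root'' arc: these points, unlike the discrete exceptional classes, lie genuinely in the image of $\chi_n$ (along the real axis of $\overline{\H_0}$), so the covering property must be shown to fail there by the accumulation mechanism above. That requires a careful analysis of the charts $\psi_k$ as $|k|\to\infty$ --- one must show that their images accumulate, with width tending to $0$, exactly onto the portion of $\stab(P_n)/\C$ lying over that arc, which is precisely the region $\{x+yi\mid b_n\le x\le c_n,\ y<0\}$ that is absent in Corollary~\ref{maincor} for $n>2$, collapses to a single slit for $n=2$, and disappears altogether for $n=1$. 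Pinning down this accumulation --- equivalently, isolating exactly where the covering property breaks as $n$ passes from $2$ to $3$ --- is the technical heart of the argument; the remainder is bookkeeping with the explicit charts supplied by the cited results.
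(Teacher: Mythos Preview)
Your approach is essentially the same as the paper's: construct $\chi_n$ as the $\C$-quotient of $\z$, compute it on each chart $\H_k$ via a M\"obius transformation, and read off the bad set. The paper makes the M\"obius step completely explicit by introducing
\[
 G_n=\begin{bmatrix}0&1\\-1&n\end{bmatrix}\in\mathrm{PSL}(2,\C),\qquad
 G_n^{k}=\begin{bmatrix}-a_{k-1}&a_k\\-a_k&a_{k+1}\end{bmatrix},
\]
and verifying $\chi_n(\H_k)=G_n^{-k}\cdot\chi_n(\H_0)$, so that the omitted points $[a_{k+1}:a_k]$, $[a_k:a_{k-1}]$ are simply $G_n^{-k}\cdot[1:0]$ and $G_n^{-k}\cdot[0:1]$; for $n=1$ this matrix has order $3$, which explains cleanly why only three points need to be removed. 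You have the same mechanism in mind but leave it implicit.

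Where you and the paper differ is in the minimality argument for the interior arc points when $n>2$. You propose an ``accumulation of charts'' mechanism (the images of $\H_k$ as $|k|\to\infty$ pile up over the arc with width tending to $0$); note, though, that an interior arc point is \emph{not} an accumulation point of the exceptional classes $[a_k:a_{k+1}]$ --- only the two endpoints are --- so your first formulation does not literally apply there, and you correctly flag this as the delicate step. The paper bypasses this entirely with a one-line observation: for $p=[1:\lambda]$ on the arc, the component of $\chi_n^{-1}(U)$ meeting $\H_0$ is a neighbourhood of the boundary point $\log\lambda\in\partial\H_0$, and $\chi_n$ restricted to it is visibly not a homeomorphism onto $U$. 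This avoids any asymptotic analysis of the $\psi_k$. Your route would work but is more laborious; the paper's is the shorter path.

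Two small cosmetic points: the paper's $\C$-action convention is $z\cdot Z=e^{z}Z$ (not $e^{-i\pi z}Z$), and what you want to compute is $\chi_n|_{\H_k}$ rather than $\chi_n\circ\psi_k$ (the $\psi_k$ are gluing maps between the open strips, not chart parametrisations).
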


\begin{figure}
{\footnotesize
\unitlength 1pt
\begin{picture}(298.4028,106.2369)(  2.6017,-113.4639)
%
\special{pn 8}%
\special{ar 638 370 60 30  3.1415927  6.2831853}%
%
%
\special{pn 8}%
\special{ar 680 370 102 42  6.2831853  3.1415927}%
%
%
\special{pn 8}%
\special{ar 638 370 138 90  3.1415927  6.2831853}%
%
%
\special{pn 8}%
\special{pa 500 362}%
\special{pa 498 370}%
\special{fp}%
\special{sh 1}%
\special{pa 498 370}%
\special{pa 528 306}%
\special{pa 506 316}%
\special{pa 488 300}%
\special{pa 498 370}%
\special{fp}%
%
%
\special{pn 8}%
\special{ar 642 832 362 360  0.0000000  6.2831853}%
%
%
\special{pn 8}%
\special{pa 280 832}%
\special{pa 282 826}%
\special{fp}%
\special{pa 302 812}%
\special{pa 308 808}%
\special{fp}%
\special{pa 334 800}%
\special{pa 340 798}%
\special{fp}%
\special{pa 366 794}%
\special{pa 374 792}%
\special{fp}%
\special{pa 400 788}%
\special{pa 408 786}%
\special{fp}%
\special{pa 434 784}%
\special{pa 440 782}%
\special{fp}%
\special{pa 468 780}%
\special{pa 474 780}%
\special{fp}%
\special{pa 502 778}%
\special{pa 508 776}%
\special{fp}%
\special{pa 536 776}%
\special{pa 542 774}%
\special{fp}%
\special{pa 570 774}%
\special{pa 578 774}%
\special{fp}%
\special{pa 604 772}%
\special{pa 612 772}%
\special{fp}%
\special{pa 638 772}%
\special{pa 646 772}%
\special{fp}%
\special{pa 672 772}%
\special{pa 680 772}%
\special{fp}%
\special{pa 706 774}%
\special{pa 714 774}%
\special{fp}%
\special{pa 740 774}%
\special{pa 748 776}%
\special{fp}%
\special{pa 776 776}%
\special{pa 782 776}%
\special{fp}%
\special{pa 808 780}%
\special{pa 816 780}%
\special{fp}%
\special{pa 842 782}%
\special{pa 850 784}%
\special{fp}%
\special{pa 876 786}%
\special{pa 884 788}%
\special{fp}%
\special{pa 910 792}%
\special{pa 916 794}%
\special{fp}%
\special{pa 944 798}%
\special{pa 950 800}%
\special{fp}%
\special{pa 976 808}%
\special{pa 982 810}%
\special{fp}%
\special{pa 1002 826}%
\special{pa 1004 832}%
\special{fp}%
%
%
\special{pn 8}%
\special{ar 642 832 362 60  6.2831853  3.1415927}%
%
%
\special{pn 20}%
\special{ar 642 832 362 360  4.7123890  1.5707963}%
%
%
\special{pn 20}%
\special{ar 642 832 482 480  5.4545864  0.8272208}%
%
%
\special{pn 8}%
\special{pa 160 592}%
\special{pa 160 1070}%
\special{fp}%
%
%
\special{pn 8}%
\special{ar 160 532 62 60  1.5707963  4.7123890}%
%
%
\special{pn 8}%
\special{ar 160 1130 62 60  1.5707963  4.7123890}%
%
%
\special{pn 8}%
\special{pa 160 472}%
\special{pa 974 472}%
\special{fp}%
%
%
\special{pn 8}%
\special{pa 160 1190}%
\special{pa 974 1190}%
\special{fp}%
%
%
\special{pn 4}%
\special{pa 100 532}%
\special{pa 100 1130}%
\special{fp}%
%
%
\special{pn 4}%
\special{pa 462 472}%
\special{pa 160 772}%
\special{fp}%
\special{pa 288 754}%
\special{pa 160 878}%
\special{fp}%
\special{pa 282 866}%
\special{pa 160 986}%
\special{fp}%
\special{pa 306 950}%
\special{pa 106 1148}%
\special{fp}%
\special{pa 336 1028}%
\special{pa 172 1190}%
\special{fp}%
\special{pa 384 1088}%
\special{pa 282 1190}%
\special{fp}%
\special{pa 444 1136}%
\special{pa 390 1190}%
\special{fp}%
\special{pa 354 472}%
\special{pa 160 664}%
\special{fp}%
\special{pa 246 472}%
\special{pa 136 580}%
\special{fp}%
%
%
\special{pn 4}%
\special{pa 1094 706}%
\special{pa 1004 796}%
\special{fp}%
\special{pa 1064 628}%
\special{pa 986 706}%
\special{fp}%
\special{pa 1028 556}%
\special{pa 950 634}%
\special{fp}%
\special{pa 974 502}%
\special{pa 902 574}%
\special{fp}%
\special{pa 896 472}%
\special{pa 842 526}%
\special{fp}%
\special{pa 1112 796}%
\special{pa 1004 902}%
\special{fp}%
\special{pa 1112 902}%
\special{pa 824 1190}%
\special{fp}%
\special{pa 1040 1082}%
\special{pa 932 1190}%
\special{fp}%
%
%
\special{pn 4}%
\special{pa 642 472}%
\special{pa 642 292}%
\special{dt 0.027}%
%
%
\special{pn 8}%
\special{pa 642 1190}%
\special{pa 642 1370}%
\special{dt 0.045}%
%
%
\special{pn 8}%
\special{ar 632 1298 60 30  6.2831853  3.1415927}%
%
%
\special{pn 8}%
\special{ar 674 1298 102 42  3.1415927  6.2831853}%
%
%
\special{pn 8}%
\special{ar 632 1298 138 90  6.2831853  3.1415927}%
%
%
\special{pn 8}%
\special{pa 494 1306}%
\special{pa 492 1298}%
\special{fp}%
\special{sh 1}%
\special{pa 492 1298}%
\special{pa 482 1368}%
\special{pa 500 1352}%
\special{pa 522 1362}%
\special{pa 492 1298}%
\special{fp}%
%
%
\special{pn 8}%
\special{ar 1848 832 362 360  0.0000000  6.2831853}%
%
%
\special{pn 8}%
\special{pa 1486 832}%
\special{pa 1488 826}%
\special{fp}%
\special{pa 1504 812}%
\special{pa 1510 810}%
\special{fp}%
\special{pa 1534 802}%
\special{pa 1540 800}%
\special{fp}%
\special{pa 1566 794}%
\special{pa 1574 792}%
\special{fp}%
\special{pa 1600 788}%
\special{pa 1608 788}%
\special{fp}%
\special{pa 1634 784}%
\special{pa 1642 782}%
\special{fp}%
\special{pa 1668 780}%
\special{pa 1676 780}%
\special{fp}%
\special{pa 1704 778}%
\special{pa 1710 776}%
\special{fp}%
\special{pa 1738 776}%
\special{pa 1746 774}%
\special{fp}%
\special{pa 1772 774}%
\special{pa 1780 774}%
\special{fp}%
\special{pa 1808 772}%
\special{pa 1816 772}%
\special{fp}%
\special{pa 1844 772}%
\special{pa 1850 772}%
\special{fp}%
\special{pa 1878 772}%
\special{pa 1886 772}%
\special{fp}%
\special{pa 1914 774}%
\special{pa 1922 774}%
\special{fp}%
\special{pa 1950 774}%
\special{pa 1956 776}%
\special{fp}%
\special{pa 1984 776}%
\special{pa 1992 778}%
\special{fp}%
\special{pa 2018 780}%
\special{pa 2026 780}%
\special{fp}%
\special{pa 2054 782}%
\special{pa 2060 784}%
\special{fp}%
\special{pa 2086 788}%
\special{pa 2094 788}%
\special{fp}%
\special{pa 2120 792}%
\special{pa 2128 794}%
\special{fp}%
\special{pa 2154 800}%
\special{pa 2160 802}%
\special{fp}%
\special{pa 2184 810}%
\special{pa 2190 812}%
\special{fp}%
\special{pa 2208 826}%
\special{pa 2210 832}%
\special{fp}%
%
%
\special{pn 8}%
\special{ar 1848 832 362 60  6.2831853  3.1415927}%
%
%
\special{pn 20}%
\special{ar 1848 832 362 360  1.5707963  6.2831853}%
%
\put(46.1805,-15){\makebox(0,0){$[0\!:\!1]$}}%
\put(46.1805,-107){\makebox(0,0){$[1\!:\!0]$}}%
%
\special{pn 8}%
\special{pa 222 532}%
\special{pa 222 292}%
\special{fp}%
\special{pa 222 292}%
\special{pa 222 292}%
\special{fp}%
%
\put(16,-16){\makebox(0,0){$\H_0$}}%
%
\special{pn 8}%
\special{ar 1820 1332 34 16  6.2831853  3.1415927}%
%
%
\special{pn 8}%
\special{ar 1844 1332 58 22  3.1415927  6.2831853}%
%
%
\special{pn 8}%
\special{ar 1820 1332 78 50  6.2831853  3.1415927}%
%
%
\special{pn 8}%
\special{pa 1744 1336}%
\special{pa 1744 1332}%
\special{fp}%
\special{sh 1}%
\special{pa 1744 1332}%
\special{pa 1724 1400}%
\special{pa 1744 1386}%
\special{pa 1764 1400}%
\special{pa 1744 1332}%
\special{fp}%
%
%
\special{pn 8}%
\special{pa 2210 832}%
\special{pa 2572 1070}%
\special{fp}%
%
%
\special{pn 8}%
\special{pa 1848 1190}%
\special{pa 2090 1550}%
\special{fp}%
%
%
\special{pn 8}%
\special{pa 2210 1430}%
\special{pa 2450 1190}%
\special{fp}%
%
%
\special{pn 8}%
\special{pa 2210 1550}%
\special{pa 2572 1190}%
\special{fp}%
%
%
\special{pn 8}%
\special{ar 2150 1490 82 82  5.4977871  2.4087776}%
%
%
\special{pn 8}%
\special{ar 2510 1130 86 84  5.4977871  2.3479302}%
%
%
\special{pn 20}%
\special{ar 1848 832 498 494  2.8975906  4.9573676}%
%
%
\special{pn 8}%
\special{pa 1968 352}%
\special{pa 2090 562}%
\special{fp}%
%
%
\special{pn 8}%
\special{pa 1366 950}%
\special{pa 1576 1070}%
\special{fp}%
%
%
\special{pn 4}%
\special{pa 1534 460}%
\special{pa 1534 640}%
\special{fp}%
\special{pa 1462 538}%
\special{pa 1462 1004}%
\special{fp}%
\special{pa 1390 676}%
\special{pa 1390 962}%
\special{fp}%
\special{pa 1534 1022}%
\special{pa 1534 1046}%
\special{fp}%
\special{pa 1606 412}%
\special{pa 1606 556}%
\special{fp}%
\special{pa 1680 376}%
\special{pa 1680 508}%
\special{fp}%
\special{pa 1752 358}%
\special{pa 1752 478}%
\special{fp}%
\special{pa 1824 346}%
\special{pa 1824 466}%
\special{fp}%
\special{pa 1896 346}%
\special{pa 1896 472}%
\special{fp}%
\special{pa 1968 358}%
\special{pa 1968 490}%
\special{fp}%
\special{pa 2040 478}%
\special{pa 2040 520}%
\special{fp}%
%
%
\special{pn 4}%
\special{pa 2258 860}%
\special{pa 2258 1382}%
\special{fp}%
\special{pa 2186 962}%
\special{pa 2186 1556}%
\special{fp}%
\special{pa 2114 1076}%
\special{pa 2114 1556}%
\special{fp}%
\special{pa 2040 1136}%
\special{pa 2040 1478}%
\special{fp}%
\special{pa 1968 1166}%
\special{pa 1968 1370}%
\special{fp}%
\special{pa 1896 1184}%
\special{pa 1896 1262}%
\special{fp}%
\special{pa 2330 908}%
\special{pa 2330 1310}%
\special{fp}%
\special{pa 2402 956}%
\special{pa 2402 1238}%
\special{fp}%
\special{pa 2474 1004}%
\special{pa 2474 1208}%
\special{fp}%
\special{pa 2546 1052}%
\special{pa 2546 1202}%
\special{fp}%
%
%
\special{pn 8}%
\special{pa 2210 832}%
\special{pa 2450 832}%
\special{dt 0.045}%
%
%
\special{pn 8}%
\special{pa 1848 1190}%
\special{pa 1848 1370}%
\special{dt 0.045}%
%
%
\special{pn 8}%
\special{pa 2408 866}%
\special{pa 2422 842}%
\special{pa 2420 810}%
\special{pa 2408 800}%
\special{sp}%
%
%
\special{pn 8}%
\special{pa 2408 800}%
\special{pa 2388 822}%
\special{pa 2384 854}%
\special{pa 2388 886}%
\special{pa 2404 912}%
\special{pa 2408 912}%
\special{sp}%
%
%
\special{pn 8}%
\special{pa 2408 908}%
\special{pa 2436 894}%
\special{pa 2452 868}%
\special{pa 2456 836}%
\special{pa 2452 804}%
\special{pa 2440 776}%
\special{pa 2414 758}%
\special{pa 2408 758}%
\special{sp}%
%
%
\special{pn 8}%
\special{pa 2412 758}%
\special{pa 2408 758}%
\special{fp}%
\special{sh 1}%
\special{pa 2408 758}%
\special{pa 2474 778}%
\special{pa 2460 758}%
\special{pa 2474 738}%
\special{pa 2408 758}%
\special{fp}%
%
\put(133.4827,-107){\makebox(0,0){$[1\!:\!0]$}}%
\put(181.3977,-60){\makebox(0,0)[l]{$[1\!:\!1]$}}%
%
\special{pn 8}%
\special{pa 1576 472}%
\special{pa 1576 292}%
\special{fp}%
%
\put(113.8975,-16.7666){\makebox(0,0){$\H_1$}}%
%
\special{pn 8}%
\special{ar 3418 840 362 360  0.0000000  6.2831853}%
%
%
\special{pn 8}%
\special{ar 3418 840 362 60  6.2831853  3.1415927}%
%
%
\special{pn 8}%
\special{pa 3056 840}%
\special{pa 3058 834}%
\special{fp}%
\special{pa 3074 820}%
\special{pa 3080 818}%
\special{fp}%
\special{pa 3100 810}%
\special{pa 3108 810}%
\special{fp}%
\special{pa 3130 804}%
\special{pa 3138 802}%
\special{fp}%
\special{pa 3162 798}%
\special{pa 3168 796}%
\special{fp}%
\special{pa 3194 794}%
\special{pa 3202 792}%
\special{fp}%
\special{pa 3226 790}%
\special{pa 3234 788}%
\special{fp}%
\special{pa 3260 786}%
\special{pa 3268 786}%
\special{fp}%
\special{pa 3294 784}%
\special{pa 3302 784}%
\special{fp}%
\special{pa 3328 782}%
\special{pa 3336 782}%
\special{fp}%
\special{pa 3362 782}%
\special{pa 3370 782}%
\special{fp}%
\special{pa 3398 780}%
\special{pa 3404 780}%
\special{fp}%
\special{pa 3432 780}%
\special{pa 3440 780}%
\special{fp}%
\special{pa 3466 782}%
\special{pa 3474 782}%
\special{fp}%
\special{pa 3500 782}%
\special{pa 3508 782}%
\special{fp}%
\special{pa 3534 784}%
\special{pa 3542 784}%
\special{fp}%
\special{pa 3568 786}%
\special{pa 3576 786}%
\special{fp}%
\special{pa 3602 788}%
\special{pa 3610 790}%
\special{fp}%
\special{pa 3636 792}%
\special{pa 3642 794}%
\special{fp}%
\special{pa 3668 796}%
\special{pa 3674 798}%
\special{fp}%
\special{pa 3698 802}%
\special{pa 3706 804}%
\special{fp}%
\special{pa 3728 810}%
\special{pa 3734 812}%
\special{fp}%
\special{pa 3758 820}%
\special{pa 3762 822}%
\special{fp}%
\special{pa 3778 834}%
\special{pa 3780 840}%
\special{fp}%
%
%
\special{pn 20}%
\special{ar 3418 840 362 360  6.2831853  4.7123890}%
%
%
\special{pn 8}%
\special{ar 3390 338 34 16  3.1415927  6.2831853}%
%
%
\special{pn 8}%
\special{ar 3412 338 56 22  6.2831853  3.1415927}%
%
%
\special{pn 8}%
\special{ar 3390 338 76 50  3.1415927  6.2831853}%
%
%
\special{pn 8}%
\special{pa 3314 334}%
\special{pa 3314 338}%
\special{fp}%
\special{sh 1}%
\special{pa 3314 338}%
\special{pa 3334 272}%
\special{pa 3314 286}%
\special{pa 3294 272}%
\special{pa 3314 338}%
\special{fp}%
%
%
\special{pn 8}%
\special{pa 3780 840}%
\special{pa 4140 600}%
\special{fp}%
%
%
\special{pn 8}%
\special{pa 3418 480}%
\special{pa 3658 122}%
\special{fp}%
%
%
\special{pn 8}%
\special{pa 3780 242}%
\special{pa 4020 480}%
\special{fp}%
%
%
\special{pn 8}%
\special{pa 3780 122}%
\special{pa 4140 480}%
\special{fp}%
%
%
\special{pn 8}%
\special{ar 3720 182 82 82  3.8661963  0.7853982}%
%
%
\special{pn 8}%
\special{ar 4080 540 86 86  3.9269908  0.7853982}%
%
%
\special{pn 20}%
\special{ar 3418 840 498 494  1.3253306  3.3855948}%
%
%
\special{pn 8}%
\special{pa 3538 1320}%
\special{pa 3658 1110}%
\special{fp}%
%
%
\special{pn 8}%
\special{pa 2936 720}%
\special{pa 3146 600}%
\special{fp}%
%
%
\special{pn 4}%
\special{pa 3104 1212}%
\special{pa 3104 1032}%
\special{fp}%
\special{pa 3032 1134}%
\special{pa 3032 666}%
\special{fp}%
\special{pa 2960 996}%
\special{pa 2960 708}%
\special{fp}%
\special{pa 3104 648}%
\special{pa 3104 624}%
\special{fp}%
\special{pa 3176 1260}%
\special{pa 3176 1116}%
\special{fp}%
\special{pa 3248 1296}%
\special{pa 3248 1164}%
\special{fp}%
\special{pa 3322 1314}%
\special{pa 3322 1194}%
\special{fp}%
\special{pa 3394 1326}%
\special{pa 3394 1206}%
\special{fp}%
\special{pa 3466 1326}%
\special{pa 3466 1200}%
\special{fp}%
\special{pa 3538 1314}%
\special{pa 3538 1182}%
\special{fp}%
\special{pa 3610 1194}%
\special{pa 3610 1152}%
\special{fp}%
%
%
\special{pn 4}%
\special{pa 3828 810}%
\special{pa 3828 290}%
\special{fp}%
\special{pa 3756 708}%
\special{pa 3756 116}%
\special{fp}%
\special{pa 3682 594}%
\special{pa 3682 116}%
\special{fp}%
\special{pa 3610 534}%
\special{pa 3610 194}%
\special{fp}%
\special{pa 3538 504}%
\special{pa 3538 302}%
\special{fp}%
\special{pa 3466 486}%
\special{pa 3466 408}%
\special{fp}%
\special{pa 3900 762}%
\special{pa 3900 360}%
\special{fp}%
\special{pa 3972 714}%
\special{pa 3972 432}%
\special{fp}%
\special{pa 4044 666}%
\special{pa 4044 462}%
\special{fp}%
\special{pa 4116 618}%
\special{pa 4116 468}%
\special{fp}%
%
%
\special{pn 8}%
\special{pa 3780 840}%
\special{pa 4020 840}%
\special{dt 0.045}%
%
%
\special{pn 8}%
\special{pa 3418 480}%
\special{pa 3418 302}%
\special{dt 0.045}%
%
%
\special{pn 8}%
\special{pa 3976 806}%
\special{pa 3992 828}%
\special{pa 3990 860}%
\special{pa 3976 872}%
\special{sp}%
%
%
\special{pn 8}%
\special{pa 3976 872}%
\special{pa 3958 848}%
\special{pa 3954 816}%
\special{pa 3958 784}%
\special{pa 3974 760}%
\special{pa 3976 760}%
\special{sp}%
%
%
\special{pn 8}%
\special{pa 3976 762}%
\special{pa 4004 776}%
\special{pa 4020 802}%
\special{pa 4026 834}%
\special{pa 4022 866}%
\special{pa 4010 894}%
\special{pa 3984 914}%
\special{pa 3976 914}%
\special{sp}%
%
%
\special{pn 8}%
\special{pa 3982 914}%
\special{pa 3976 914}%
\special{fp}%
\special{sh 1}%
\special{pa 3976 914}%
\special{pa 4044 934}%
\special{pa 4030 914}%
\special{pa 4044 894}%
\special{pa 3976 914}%
\special{fp}%
%
%
\special{pn 8}%
\special{pa 3144 1190}%
\special{pa 3144 1370}%
\special{fp}%
%
\put(295.0784,-60){\makebox(0,0)[l]{$[1\!:\!1]$}}%
\put(247,-13){\makebox(0,0){$[0\!:\!1]$}}%
\put(228,-104){\makebox(0,0){$\H_2$}}%
\end{picture}%
}
 \caption{$\chi_1$} \label{chi1}
\end{figure}

\begin{figure}
{\footnotesize
\unitlength 1pt
\begin{picture}(297.1742,108.4050)(  2.5294,-115.6320)
%
\special{pn 8}%
\special{ar 634 386 60 30  3.1415927  6.2831853}%
%
%
\special{pn 8}%
\special{ar 676 386 102 42  6.2831853  3.1415927}%
%
%
\special{pn 8}%
\special{ar 634 386 138 90  3.1415927  6.2831853}%
%
%
\special{pn 8}%
\special{pa 498 380}%
\special{pa 496 386}%
\special{fp}%
\special{sh 1}%
\special{pa 496 386}%
\special{pa 526 324}%
\special{pa 504 334}%
\special{pa 486 318}%
\special{pa 496 386}%
\special{fp}%
%
%
\special{pn 8}%
\special{ar 640 846 360 358  0.0000000  6.2831853}%
%
%
\special{pn 8}%
\special{pa 280 846}%
\special{pa 282 840}%
\special{fp}%
\special{pa 300 826}%
\special{pa 308 824}%
\special{fp}%
\special{pa 332 816}%
\special{pa 338 814}%
\special{fp}%
\special{pa 364 808}%
\special{pa 372 808}%
\special{fp}%
\special{pa 398 802}%
\special{pa 406 802}%
\special{fp}%
\special{pa 432 798}%
\special{pa 438 798}%
\special{fp}%
\special{pa 466 794}%
\special{pa 472 794}%
\special{fp}%
\special{pa 500 792}%
\special{pa 506 792}%
\special{fp}%
\special{pa 534 790}%
\special{pa 540 790}%
\special{fp}%
\special{pa 568 788}%
\special{pa 574 788}%
\special{fp}%
\special{pa 602 788}%
\special{pa 608 788}%
\special{fp}%
\special{pa 636 788}%
\special{pa 642 788}%
\special{fp}%
\special{pa 670 788}%
\special{pa 678 788}%
\special{fp}%
\special{pa 704 788}%
\special{pa 712 788}%
\special{fp}%
\special{pa 738 790}%
\special{pa 746 790}%
\special{fp}%
\special{pa 772 792}%
\special{pa 780 792}%
\special{fp}%
\special{pa 806 794}%
\special{pa 814 794}%
\special{fp}%
\special{pa 840 798}%
\special{pa 846 798}%
\special{fp}%
\special{pa 874 802}%
\special{pa 880 802}%
\special{fp}%
\special{pa 906 808}%
\special{pa 914 808}%
\special{fp}%
\special{pa 940 814}%
\special{pa 946 816}%
\special{fp}%
\special{pa 972 824}%
\special{pa 978 826}%
\special{fp}%
\special{pa 998 842}%
\special{pa 1000 846}%
\special{fp}%
%
%
\special{pn 8}%
\special{ar 640 846 360 60  6.2831853  3.1415927}%
%
%
\special{pn 20}%
\special{ar 640 846 360 358  4.7123890  1.5707963}%
%
%
\special{pn 20}%
\special{ar 640 846 480 478  5.4543362  0.8274633}%
%
%
\special{pn 8}%
\special{pa 160 608}%
\special{pa 160 1084}%
\special{fp}%
%
%
\special{pn 8}%
\special{ar 160 548 60 60  1.5707963  4.7123890}%
%
%
\special{pn 8}%
\special{ar 160 1144 60 60  1.5707963  4.7123890}%
%
%
\special{pn 8}%
\special{pa 160 488}%
\special{pa 970 488}%
\special{fp}%
%
%
\special{pn 8}%
\special{pa 160 1204}%
\special{pa 970 1204}%
\special{fp}%
%
%
\special{pn 4}%
\special{pa 100 548}%
\special{pa 100 1144}%
\special{fp}%
%
%
\special{pn 4}%
\special{pa 460 488}%
\special{pa 160 786}%
\special{fp}%
\special{pa 286 768}%
\special{pa 160 894}%
\special{fp}%
\special{pa 280 882}%
\special{pa 160 1002}%
\special{fp}%
\special{pa 304 966}%
\special{pa 106 1162}%
\special{fp}%
\special{pa 334 1042}%
\special{pa 172 1204}%
\special{fp}%
\special{pa 382 1102}%
\special{pa 280 1204}%
\special{fp}%
\special{pa 442 1150}%
\special{pa 388 1204}%
\special{fp}%
\special{pa 352 488}%
\special{pa 160 680}%
\special{fp}%
\special{pa 244 488}%
\special{pa 136 596}%
\special{fp}%
%
%
\special{pn 4}%
\special{pa 1090 720}%
\special{pa 1000 810}%
\special{fp}%
\special{pa 1060 644}%
\special{pa 982 720}%
\special{fp}%
\special{pa 1024 572}%
\special{pa 946 650}%
\special{fp}%
\special{pa 970 518}%
\special{pa 898 590}%
\special{fp}%
\special{pa 892 488}%
\special{pa 838 542}%
\special{fp}%
\special{pa 1108 810}%
\special{pa 1000 918}%
\special{fp}%
\special{pa 1108 918}%
\special{pa 820 1204}%
\special{fp}%
\special{pa 1036 1096}%
\special{pa 928 1204}%
\special{fp}%
%
%
\special{pn 4}%
\special{pa 640 488}%
\special{pa 640 310}%
\special{dt 0.027}%
%
%
\special{pn 8}%
\special{pa 640 1204}%
\special{pa 640 1382}%
\special{dt 0.045}%
%
%
\special{pn 8}%
\special{ar 628 1312 60 30  6.2831853  3.1415927}%
%
%
\special{pn 8}%
\special{ar 670 1312 102 42  3.1415927  6.2831853}%
%
%
\special{pn 8}%
\special{ar 628 1312 138 90  6.2831853  3.1415927}%
%
%
\special{pn 8}%
\special{pa 492 1318}%
\special{pa 490 1312}%
\special{fp}%
\special{sh 1}%
\special{pa 490 1312}%
\special{pa 480 1380}%
\special{pa 498 1364}%
\special{pa 520 1374}%
\special{pa 490 1312}%
\special{fp}%
%
\put(46.1805,-15){\makebox(0,0){$[0\!:\!1]$}}%
\put(46.1805,-107){\makebox(0,0){$[1\!:\!0]$}}%
%
\special{pn 8}%
\special{pa 220 548}%
\special{pa 220 310}%
\special{fp}%
\special{pa 220 310}%
\special{pa 220 310}%
\special{fp}%
%
\put(15.8994,-17.9952){\makebox(0,0){$\H_0$}}%
%
\special{pn 8}%
\special{ar 3400 850 360 358  0.0000000  6.2831853}%
%
%
\special{pn 8}%
\special{ar 3400 850 360 60  6.2831853  3.1415927}%
%
%
\special{pn 8}%
\special{pa 3040 850}%
\special{pa 3042 844}%
\special{fp}%
\special{pa 3058 830}%
\special{pa 3064 828}%
\special{fp}%
\special{pa 3086 820}%
\special{pa 3092 818}%
\special{fp}%
\special{pa 3116 812}%
\special{pa 3122 812}%
\special{fp}%
\special{pa 3146 806}%
\special{pa 3154 806}%
\special{fp}%
\special{pa 3180 802}%
\special{pa 3186 802}%
\special{fp}%
\special{pa 3212 798}%
\special{pa 3218 798}%
\special{fp}%
\special{pa 3244 796}%
\special{pa 3252 794}%
\special{fp}%
\special{pa 3278 794}%
\special{pa 3286 792}%
\special{fp}%
\special{pa 3312 792}%
\special{pa 3320 792}%
\special{fp}%
\special{pa 3346 790}%
\special{pa 3354 790}%
\special{fp}%
\special{pa 3380 790}%
\special{pa 3388 790}%
\special{fp}%
\special{pa 3414 790}%
\special{pa 3422 790}%
\special{fp}%
\special{pa 3448 790}%
\special{pa 3456 790}%
\special{fp}%
\special{pa 3482 792}%
\special{pa 3490 792}%
\special{fp}%
\special{pa 3516 792}%
\special{pa 3524 794}%
\special{fp}%
\special{pa 3550 794}%
\special{pa 3558 796}%
\special{fp}%
\special{pa 3584 798}%
\special{pa 3590 798}%
\special{fp}%
\special{pa 3616 802}%
\special{pa 3624 802}%
\special{fp}%
\special{pa 3648 806}%
\special{pa 3654 808}%
\special{fp}%
\special{pa 3680 812}%
\special{pa 3686 812}%
\special{fp}%
\special{pa 3710 818}%
\special{pa 3716 820}%
\special{fp}%
\special{pa 3736 828}%
\special{pa 3742 830}%
\special{fp}%
\special{pa 3758 844}%
\special{pa 3760 850}%
\special{fp}%
%
%
\special{pn 20}%
\special{ar 3398 840 360 358  5.5586157  5.1913026}%
%
%
\special{pn 20}%
\special{ar 3338 900 420 418  0.7338974  4.2879162}%
%
%
\special{pn 8}%
\special{pa 3164 518}%
\special{pa 3284 500}%
\special{fp}%
\special{pa 3644 1180}%
\special{pa 3668 1080}%
\special{fp}%
%
%
\special{pn 8}%
\special{pa 3560 530}%
\special{pa 3524 124}%
\special{fp}%
\special{pa 3662 602}%
\special{pa 4122 722}%
\special{fp}%
%
%
\special{pn 8}%
\special{ar 3584 184 84 84  3.9269908  0.7853982}%
%
%
\special{pn 8}%
\special{ar 4062 662 86 84  3.9269908  0.7936624}%
%
%
\special{pn 8}%
\special{pa 3638 244}%
\special{pa 4008 608}%
\special{fp}%
%
%
\special{pn 4}%
\special{pa 3758 362}%
\special{pa 3758 626}%
\special{fp}%
\special{pa 3828 428}%
\special{pa 3828 644}%
\special{fp}%
\special{pa 3900 500}%
\special{pa 3900 662}%
\special{fp}%
\special{pa 3972 572}%
\special{pa 3972 684}%
\special{fp}%
\special{pa 4044 584}%
\special{pa 4044 702}%
\special{fp}%
\special{pa 4116 596}%
\special{pa 4116 720}%
\special{fp}%
\special{pa 3686 292}%
\special{pa 3686 608}%
\special{fp}%
\special{pa 3614 106}%
\special{pa 3614 560}%
\special{fp}%
\special{pa 3542 112}%
\special{pa 3542 298}%
\special{fp}%
%
%
\special{pn 8}%
\special{pa 3644 130}%
\special{pa 4122 596}%
\special{fp}%
%
%
\special{pn 4}%
\special{pa 3110 1066}%
\special{pa 3110 1240}%
\special{fp}%
\special{pa 3038 918}%
\special{pa 3038 1180}%
\special{fp}%
\special{pa 2966 726}%
\special{pa 2966 1072}%
\special{fp}%
\special{pa 3038 620}%
\special{pa 3038 762}%
\special{fp}%
\special{pa 3110 560}%
\special{pa 3110 614}%
\special{fp}%
\special{pa 3182 1132}%
\special{pa 3182 1282}%
\special{fp}%
\special{pa 3254 1174}%
\special{pa 3254 1300}%
\special{fp}%
\special{pa 3326 1198}%
\special{pa 3326 1312}%
\special{fp}%
\special{pa 3398 1204}%
\special{pa 3398 1306}%
\special{fp}%
\special{pa 3470 1198}%
\special{pa 3470 1288}%
\special{fp}%
\special{pa 3542 1174}%
\special{pa 3542 1252}%
\special{fp}%
\special{pa 3614 1132}%
\special{pa 3614 1204}%
\special{fp}%
%
%
\special{pn 8}%
\special{pa 3158 1204}%
\special{pa 3158 1382}%
\special{fp}%
%
\put(228.1564,-104.2133){\makebox(0,0){$H_{-k}$}}%
%
\special{pn 8}%
\special{ar 3398 536 156 192  4.8533400  6.2831853}%
%
\put(245.0676,-24.9332){\makebox(0,0)[rb]{$[a_{k-1}\!:\!a_k]$}}%
%
\special{pn 8}%
\special{ar 3662 786 168 180  4.7319943  6.2831853}%
%
\put(276.6496,-58.9723){\makebox(0,0)[lt]{$[a_k\!:\!a_{k+1}]$}}%
%
\special{pn 8}%
\special{ar 1842 852 360 358  0.0000000  6.2831853}%
%
%
\special{pn 8}%
\special{pa 1482 852}%
\special{pa 1484 846}%
\special{fp}%
\special{pa 1502 832}%
\special{pa 1508 830}%
\special{fp}%
\special{pa 1530 822}%
\special{pa 1538 820}%
\special{fp}%
\special{pa 1562 814}%
\special{pa 1570 812}%
\special{fp}%
\special{pa 1596 808}%
\special{pa 1602 806}%
\special{fp}%
\special{pa 1630 804}%
\special{pa 1636 802}%
\special{fp}%
\special{pa 1664 800}%
\special{pa 1670 798}%
\special{fp}%
\special{pa 1698 796}%
\special{pa 1704 796}%
\special{fp}%
\special{pa 1732 794}%
\special{pa 1740 794}%
\special{fp}%
\special{pa 1768 792}%
\special{pa 1774 792}%
\special{fp}%
\special{pa 1802 792}%
\special{pa 1810 792}%
\special{fp}%
\special{pa 1838 792}%
\special{pa 1844 792}%
\special{fp}%
\special{pa 1872 792}%
\special{pa 1880 792}%
\special{fp}%
\special{pa 1908 792}%
\special{pa 1914 792}%
\special{fp}%
\special{pa 1942 794}%
\special{pa 1950 794}%
\special{fp}%
\special{pa 1976 796}%
\special{pa 1984 796}%
\special{fp}%
\special{pa 2012 798}%
\special{pa 2018 800}%
\special{fp}%
\special{pa 2046 802}%
\special{pa 2054 802}%
\special{fp}%
\special{pa 2080 806}%
\special{pa 2086 808}%
\special{fp}%
\special{pa 2112 812}%
\special{pa 2120 814}%
\special{fp}%
\special{pa 2146 820}%
\special{pa 2152 820}%
\special{fp}%
\special{pa 2176 830}%
\special{pa 2182 832}%
\special{fp}%
\special{pa 2200 846}%
\special{pa 2202 852}%
\special{fp}%
%
%
\special{pn 8}%
\special{ar 1842 852 360 60  6.2831853  3.1415927}%
%
%
\special{pn 20}%
\special{ar 1838 860 360 358  1.0918827  0.7245696}%
%
%
\special{pn 20}%
\special{ar 1778 800 420 418  1.9940833  5.5510929}%
%
%
\special{pn 8}%
\special{pa 1606 1182}%
\special{pa 1724 1200}%
\special{fp}%
\special{pa 2084 520}%
\special{pa 2108 622}%
\special{fp}%
%
%
\special{pn 8}%
\special{pa 2000 1170}%
\special{pa 1964 1576}%
\special{fp}%
\special{pa 2102 1098}%
\special{pa 2564 980}%
\special{fp}%
%
%
\special{pn 8}%
\special{ar 2024 1516 86 84  5.4895229  2.3479302}%
%
%
\special{pn 8}%
\special{ar 2504 1040 84 84  5.4977871  2.3645977}%
%
%
\special{pn 8}%
\special{pa 2078 1458}%
\special{pa 2450 1094}%
\special{fp}%
%
%
\special{pn 4}%
\special{pa 2198 1338}%
\special{pa 2198 1076}%
\special{fp}%
\special{pa 2270 1272}%
\special{pa 2270 1058}%
\special{fp}%
\special{pa 2342 1200}%
\special{pa 2342 1040}%
\special{fp}%
\special{pa 2414 1130}%
\special{pa 2414 1016}%
\special{fp}%
\special{pa 2486 1118}%
\special{pa 2486 998}%
\special{fp}%
\special{pa 2558 1106}%
\special{pa 2558 980}%
\special{fp}%
\special{pa 2126 1410}%
\special{pa 2126 1094}%
\special{fp}%
\special{pa 2054 1594}%
\special{pa 2054 1142}%
\special{fp}%
\special{pa 1982 1588}%
\special{pa 1982 1404}%
\special{fp}%
%
%
\special{pn 8}%
\special{pa 2084 1570}%
\special{pa 2564 1106}%
\special{fp}%
%
%
\special{pn 4}%
\special{pa 1552 634}%
\special{pa 1552 462}%
\special{fp}%
\special{pa 1480 784}%
\special{pa 1480 520}%
\special{fp}%
\special{pa 1408 974}%
\special{pa 1408 628}%
\special{fp}%
\special{pa 1480 1082}%
\special{pa 1480 938}%
\special{fp}%
\special{pa 1552 1142}%
\special{pa 1552 1088}%
\special{fp}%
\special{pa 1624 568}%
\special{pa 1624 420}%
\special{fp}%
\special{pa 1694 526}%
\special{pa 1694 402}%
\special{fp}%
\special{pa 1766 504}%
\special{pa 1766 390}%
\special{fp}%
\special{pa 1838 498}%
\special{pa 1838 396}%
\special{fp}%
\special{pa 1910 504}%
\special{pa 1910 414}%
\special{fp}%
\special{pa 1982 526}%
\special{pa 1982 450}%
\special{fp}%
\special{pa 2054 568}%
\special{pa 2054 498}%
\special{fp}%
%
%
\special{pn 8}%
\special{pa 1600 498}%
\special{pa 1600 318}%
\special{fp}%
%
%
\special{pn 8}%
\special{ar 1838 1166 156 192  6.2831853  1.4298453}%
%
%
\special{pn 8}%
\special{ar 2102 914 168 180  6.2831853  1.5513183}%
%
\put(115.5597,-17.9952){\makebox(0,0){$\H_k$}}%
\put(132.8323,-97.3477){\makebox(0,0)[rt]{$[a_k\!:\!a_{k-1}]$}}%
\put(163.1857,-64.5371){\makebox(0,0)[lb]{$[a_{k+1}\!:\!a_k]$}}%
\end{picture}%
}
 \caption{$\chi_n$}  \label{chi2}
\end{figure}

\begin{cor}
 \label{main2}
 The local homeomorphism
 $\z:\stab(P_n)\rightarrow\C^2\setminus\{(0,0)\}$ is a covering map
 if it is restricted to the complement of three lines ($n=1$),
 countably many lines ($n=2$), or a bundle of uncountably many lines
 ($n>2$).
\end{cor}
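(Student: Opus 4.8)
The plan is to deduce Corollary~\ref{main2} from Theorem~\ref{main1} by transporting the covering statement about $\chi_n$ up along the $\C$-actions. First I would make the equivariance precise: the $\C$-action on $\stab(P_n)$ is the restriction of the $\widetilde{\mathrm{GL}^+(2,\R)}$-action (cf.\ \cite{bri}, \cite[Definition~2.3]{oka}), and under $\z$ it corresponds to the linear action of $\C$ on $\C^2$ that factors through the surjective homomorphism $\C\to\C\setminus\{0\}$, $z\mapsto e^{-i\pi z}$, whose kernel is $2\Z$. Thus $\z$ is $\C$-equivariant and $q\circ\z=\chi_n\circ p$, where $p\colon\stab(P_n)\to\stab(P_n)/\C$ and $q\colon\C^2\setminus\{(0,0)\}\to\CP^1$ are the orbit maps. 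Downstairs the action is \emph{not} free: every point of $\C^2\setminus\{(0,0)\}$ has stabilizer $2\Z$, its orbit is exactly the punctured complex line through it, and $q$ is the associated locally trivial fibre bundle with fibre $\C\setminus\{0\}\cong\C/2\Z$. Upstairs the action \emph{is} free, and in Macrì's coordinate charts (\cite[Theorem~4.5]{mac}) it is a translation in one chart coordinate, so $p$ is a locally trivial principal $\C$-bundle; its base is simply connected by Corollary~\ref{maincor}, hence $p$ is even trivial.

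Next I would set $B_n\subset\CP^1$ to be the minimal bad set of Theorem~\ref{main1} and put $C_n:=q^{-1}(B_n)$, a union of punctured complex lines through the origin in $\C^2$. Reading off the three cases of Theorem~\ref{main1}: $C_n$ consists of three lines if $n=1$; of the countably many lines $q^{-1}[k:k+1]$ ($k\in\Z$) together with $q^{-1}[1:1]$ if $n=2$; and, if $n>2$, of the countably many lines $q^{-1}[a_k:a_{k+1}]$ ($k\in\Z$) together with the uncountable family $q^{-1}[1:\lambda]$ for $\frac{n-\sqrt{n^2-4}}{2}\le\lambda\le\frac{n+\sqrt{n^2-4}}{2}$ --- a cone of real dimension $3$, i.e.\ ``a bundle of uncountably many lines''. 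One has $q^{-1}(\CP^1\setminus B_n)=(\C^2\setminus\{(0,0)\})\setminus C_n$ and $\z^{-1}\bigl((\C^2\setminus\{(0,0)\})\setminus C_n\bigr)=p^{-1}\bigl(\chi_n^{-1}(\CP^1\setminus B_n)\bigr)$.

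The core step is to show $\z$, restricted to the preimage of $(\C^2\setminus\{(0,0)\})\setminus C_n$, is a covering onto it. Restricting the commutative square over $\CP^1\setminus B_n$, Theorem~\ref{main1} makes $\chi_n$ a covering there; pulling the fibre bundle $q$ back along this covering yields a space $P$ with a covering map $P\to(\C^2\setminus\{(0,0)\})\setminus C_n$ (base change of a covering) and a $(\C\setminus\{0\})$-bundle $P\to\chi_n^{-1}(\CP^1\setminus B_n)$. The restricted $\z$ then factors as $p^{-1}\bigl(\chi_n^{-1}(\CP^1\setminus B_n)\bigr)\xrightarrow{(p,\z)}P\to(\C^2\setminus\{(0,0)\})\setminus C_n$, where on each $\C$-orbit the first arrow is a translate of the universal covering $\C\to\C\setminus\{0\}$; choosing local trivializations of $p$ and of $P$ over a common chart of the base turns this fibrewise covering into an honest covering map, and a composite of covering maps over a locally path-connected, semilocally simply connected space --- which $(\C^2\setminus\{(0,0)\})\setminus C_n$ is --- is again a covering. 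That proves the corollary. For the implicit minimality of $C_n$ I would reverse the chain: if $\z$ restricted to the complement of some union of lines $C$ were a covering, then successive path lifting through $q$, through $\z$, and down through $p$ would exhibit $\chi_n$ as a covering over $\CP^1\setminus q(C)$, forcing $q(C)\supseteq B_n$ by Theorem~\ref{main1}, hence $C\supseteq C_n$.

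The main obstacle I anticipate is just the bookkeeping created by the non-freeness of the $\C$-action on $\C^2\setminus\{(0,0)\}$: because $q$ is a $(\C\setminus\{0\})$-bundle while $p$ is a genuine $\C$-bundle, $\z$ restricted to a single orbit is the infinite cyclic cover $\C\to\C\setminus\{0\}$ rather than a homeomorphism, and one must check that this assembles with the two local trivializations into a single covering map. Granting that, together with the citable facts that $q$ is a fibre bundle and $p$ a principal $\C$-bundle, all of the substantive content of Corollary~\ref{main2} is already carried by Theorem~\ref{main1}.
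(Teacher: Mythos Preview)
Your proposal is correct and follows the paper's own approach: the paper establishes the same commutative square of principal bundles $\C\to\stab(P_n)\to\cc_n$ over $\C^\times\to\C^2\setminus\{(0,0)\}\to\CP^1$ (with vertical maps $\exp$, $\z$, $\chi_n$) just before the proof, and its argument for Corollary~\ref{main2} then simply computes $\pi^{-1}(\mathcal{CP}_n)$ explicitly as the stated line configurations, leaving the bundle-theoretic deduction you spell out as implicit. One small slip to fix: with the paper's convention the $\C$-action on central charges is $z\cdot Z=e^{z}Z$, so the homomorphism $\C\to\C^\times$ is $z\mapsto e^{z}$ with kernel $2\pi i\Z$, not $z\mapsto e^{-i\pi z}$ with kernel $2\Z$.
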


\subsection{Organization of this article}
In section~2, we recall the notion of stability conditions on a
triangulated category, recall the action of
$\widetilde{\mathrm{GL}^+(2,\R)}$,
and recall Macr{\`{\i}}'s work for a triangulated category generated by
finitely many exceptional objects.
In section~3, we recall Macr{\`{\i}}'s work describing coordinate
neighborhoods of $\stab(P_n)$.
In section~4 and section~5, we see how the coordinate neighborhoods are
glued together and prove Corollary~\ref{maincor}.
In section 6 we prove Theorem~\ref{main1} and Corollary~\ref{main2}.

\subsection*{Acknowledgements}
I would like to thank Dai Tamaki for introducing me to the subject and
invaluable advice.

\section{Stability conditions}

In this section we assume that a triangulated category $\T$ satisfies
the following conditions.
\begin{itemize}
 \item $\hom^\bullet_\T(A,B)=\bigoplus_k\hom^k_\T(A,B)%
        =\bigoplus_k\hom_\T(A,B[k])$
       is a finite dimensional $\C$-vector space where $[k]$ is the
       $k$-fold shift functor.
 \item $\T$ is essentially small, i.e. $\T$ is equivalent to a category
       in which the class of objects is a set.
\end{itemize}

The Grothendieck group of $\T$, $K(\T)$, is the quotient group of the
free abelian group generated by all isomorphism classes of objects in
$\T$ over the subgroup generated by the elements of the form
$[A]+[B]-[C]$ for each distinguished triangle
$A \rightarrow C \rightarrow B$ in $\T$.

\subsection{Definition of stability conditions}

A \emph{stability condition} on $\T$ is a pair $\sigma=(Z,\P)$ of
\begin{itemize}
 \item a group homomorphism $Z:K(\T) \rightarrow \C$ and
 \item full additive subcategories $\P(\phi)$ of $\T$ for each
       $\phi \in \R$
\end{itemize}
satisfying
\begin{description}
 \item[Br1] if $\phi_1>\phi_2$ and $A_j \in \P(\phi_j)$ then
	    $\hom_\T(A_1,A_2)=0$,
 \item[Br2] $\P(\phi+1)=\P(\phi)[1]$,
 \item[Br3] each nonzero object has a
	    \emph{Harder-Narasimhan filtration}, and
 \item[Br4] if $0 \neq A \in \P(\phi)$ then
	    $Z([A])=m(A)\, e^{i\pi\phi}$ for some
	    $m(A) \in \R_{>0}$.
\end{description}
A Harder-Narasimhan filtration of a nonzero object $E\in\T$ is a diagram
\[
 \xymatrix{
  0=E_0 \ar[rr] && E_1 \ar[r] \ar[dl] & \dots \ar[r] & 
  E_{n-1} \ar[rr] && E_n=E \ar[dl] \\
  & A_1 \ar@{..>}[lu] &&&& A_n \ar@{..>}[lu] &
 }
\]
such that each $E_{j-1}\rightarrow E_{j}\rightarrow A_{j}$ is a
distinguished triangle in $\T$, $A_j\in\P(\phi_j)$ for all $j$ and
$\phi_1>\dots>\phi_n$.
Note that for a stability condition $\sigma=(Z,\P)$;
\begin{itemize}
 \item a Harder-Narasimhan filtration for a nonzero object is unique up
       to isomorphism,
 \item $Z$ is called the \emph{central charge},
 \item nonzero objects in $\P(\phi)$ are called
       \emph{semistable of phase} $\phi$,
 \item simple objects in $\P(\phi)$ are called \emph{stable}, and
 \item define $\phi_\sigma^+(E):=\phi_1$ and $\phi_\sigma^-(E):=\phi_n$.
\end{itemize}

A stability condition $(Z,\P)$ is called {\em locally finite} if there
exists $\varepsilon>0$ such that $\P(\phi-\varepsilon,\phi+\varepsilon)$
is of finite length for each $\phi$, that is, the category is Artinian
and Noetherian.
Let $\stab(\T)$ be the set of all locally-finite stability conditions on
$\T$.
Bridgeland defined a generalized metric on $\stab(\T)$ as
\[
 d(\sigma_1,\sigma_2)=\sup_{0\neq E\in\T}\left\{
 |\phi_{\sigma_2}^-(E)-\phi_{\sigma_1}^-(E)|,
 |\phi_{\sigma_2}^+(E)-\phi_{\sigma_1}^+(E)|,
 \left|\log \frac{m_{\sigma_2}(E)}{m_{\sigma_1}(E)}\right|
 \right\}\in[0,\infty]
\]
for $\sigma_1,\sigma_2\in\stab(\T)$.
\begin{thm}
 \cite[Theorem 1.2]{bri}
 For each connected component $\Sigma\subset\stab(\T)$ there are a
 linear subspace $V(\Sigma)\subset\hom_\Z(K(\T),\C)$, with a
 well-defined linear topology, and a local homeomorphism
 $\z:\Sigma\rightarrow V(\Sigma)$ which maps a stability condition
 $(Z,\P)$ to its central charge $Z$.
\end{thm}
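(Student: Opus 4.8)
I would follow the deformation argument of \cite{bri}: the content of the theorem is that near a point $\sigma=(Z,\P)\in\Sigma$ a stability condition is rigidly determined by, and freely deformed with, its central charge, and that the metric $d$ is comparable to a norm on central charges. To set this up, for $\sigma=(Z,\P)$ and a nonzero $E\in\T$ with Harder--Narasimhan factors $A_1,\dots,A_k$ define the \emph{mass} $m_\sigma(E)=\sum_i|Z([A_i])|$ (well defined by Br3 and uniqueness of HN filtrations), and then the seminorm
\[
 \|U\|_\sigma=\sup_{0\neq E\in\T}\frac{|U([E])|}{m_\sigma(E)}\in[0,\infty]
\]
on $\hom_\Z(K(\T),\C)$. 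One sets $V(\Sigma)=\{\,U:\|U\|_\sigma<\infty\,\}$, a linear subspace, with the linear topology induced by $\|\cdot\|_\sigma$. To see this depends only on $\Sigma$ and not on the chosen $\sigma$: straight from the definition of $d$ one has $e^{-d(\sigma,\tau)}m_\sigma\le m_\tau\le e^{d(\sigma,\tau)}m_\sigma$ whenever $d(\sigma,\tau)<\infty$, so $\|\cdot\|_\sigma$ and $\|\cdot\|_\tau$ are then equivalent; and the relation ``joined by a finite chain of $d$-balls of radius $<1$'' has open, hence also closed, equivalence classes, so a connected component lies in a single class, and therefore $V(\Sigma)$ and the equivalence class of its topology are constant on $\Sigma$. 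Since $|Z([E])|\le m_\sigma(E)$ we get $Z\in V(\Sigma)$, so $\z$ does map $\Sigma$ into $V(\Sigma)$.

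Next I would show $\z$ is continuous and locally injective. For continuity, if $E$ is $\sigma_1$-semistable of phase $\phi$, the constraints $|\phi_{\sigma_2}^{\pm}(E)-\phi|\le d(\sigma_1,\sigma_2)$ and $|\log(m_{\sigma_2}(E)/m_{\sigma_1}(E))|\le d(\sigma_1,\sigma_2)$ bound $|Z_2([E])-Z_1([E])|$ by a constant multiple of $m_{\sigma_1}(E)$ that tends to $0$ with $d(\sigma_1,\sigma_2)$; additivity along HN filtrations upgrades this to $\|Z_2-Z_1\|_{\sigma_1}\to0$. For local injectivity, if $d(\sigma,\tau)$ is small and $Z_\sigma=Z_\tau$, then a $\tau$-semistable object of phase $\psi$ lies in $\P(\psi-\varepsilon,\psi+\varepsilon)$ and has central charge on the ray $\R_{>0}e^{i\pi\psi}$; comparing with its $\P$-HN filtration and invoking Br1 forces every factor into $\P(\psi)$, so the two slicings coincide and $\sigma=\tau$.

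The main obstacle is the openness of $\z$: given $W\in V(\Sigma)$ with $\|W-Z\|_\sigma<\sin(\pi\varepsilon)$ and $\varepsilon$ small, one must produce a stability condition $\tau=(W,\P')$ with $d(\sigma,\tau)<\varepsilon$. The recipe is that every $\sigma$-semistable $E$ of phase $\phi$ has, by the estimate, $W([E])$ in the open sector of half-angle $\pi\varepsilon$ about $e^{i\pi\phi}$, say $W([E])=m\,e^{i\pi\psi}$ with $|\psi-\phi|<\varepsilon$; one declares such $E$ to have $\P'$-phase $\psi$ and lets $\P'(\psi)$ be generated by these (together with the $\Z$-shifts to enforce Br2). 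Then Br2 and Br4 are built in, and Br1 holds because the original phase gaps of semistable objects survive a perturbation of size $<\varepsilon$. The delicate points are Br3 (HN filtrations) and local finiteness: a $\sigma$-semistable object need not be $\tau$-semistable, so one refines it by running the HN formalism again, now with respect to $W$, inside $\P(\phi)$, which is of finite length in a small phase window because $\sigma$ is locally finite; the bound $\|W-Z\|_\sigma<\sin(\pi\varepsilon)$ keeps all the new phases inside $(\phi-\varepsilon,\phi+\varepsilon)$, so refining the given $\sigma$-HN filtration of an arbitrary object terminates after finitely many steps with strictly decreasing $\P'$-phases, and finiteness of $\|W-Z\|_\sigma$ (that is, $W\in V(\Sigma)$) keeps $m_\tau$ finite and comparable to $m_\sigma$, yielding local finiteness of $\tau$ and $d(\sigma,\tau)<\varepsilon$. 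With the injectivity of the previous step this shows $\z$ restricts to a homeomorphism from the $d$-ball of radius $\varepsilon$ about $\sigma$ onto the $\|\cdot\|_\sigma$-ball of radius $\sin(\pi\varepsilon)$ about $Z$ in $V(\Sigma)$, so $\z$ is a local homeomorphism (and $\stab(\T)$ thereby acquires the structure of a manifold locally modelled on $V(\Sigma)$).
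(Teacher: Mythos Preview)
The paper does not give its own proof of this statement: it is quoted verbatim as \cite[Theorem~1.2]{bri} and used as a black box, so there is nothing in the paper to compare your argument against. Your write-up is a competent outline of Bridgeland's original deformation proof in \cite{bri}: defining $V(\Sigma)$ via the seminorm $\|U\|_\sigma=\sup_E |U([E])|/m_\sigma(E)$, showing the norm class is constant on connected components, and then establishing that $\z$ is continuous, locally injective, and open by constructing a deformed slicing $\P'$ for any $W$ with $\|W-Z\|_\sigma<\sin(\pi\varepsilon)$.

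One caution on the openness step: your description ``declare such $E$ to have $\P'$-phase $\psi$ and let $\P'(\psi)$ be generated by these'' is not quite how the construction goes, and taken literally it would not produce a slicing (distinct $\sigma$-semistables of the same $\sigma$-phase can acquire different $W$-arguments, and the categories so generated need not satisfy Br1 among themselves). Bridgeland instead works with the \emph{thin} subcategories $\P((a,b])$ for short intervals, shows that $W$ restricted to such a heart is a stability function with the HN property (using local finiteness of $\sigma$), and obtains $\P'$ by refining. Your last paragraph gestures at this refinement, so you have the right idea, but the intermediate ``declare the phase'' sentence should be replaced by the thin-subcategory construction if you want the argument to stand on its own.
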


According to Bridgeland, there is a relation between a stability
condition and a heart of a bounded t-structure on $\T$.
Before recalling it we make some definitions.
\begin{df}
 \begin{enumerate}
  \item A \emph{t-structure} on $\T$ is a full subcategory $\mathcal{F}$,
	closed under isomorphism, satisfying;
	\begin{itemize}
	 \item $\mathcal{F}[1]\subset \mathcal{F}$ and
	 \item for every object $X\in\T$, there is a triangle
	       $A\rightarrow X \rightarrow B \rightarrow A[1]$
	       such that $A\in\mathcal{F}$ and $B\in\mathcal{F}^{\perp}$
	       where $\mathcal{F}^\perp=\{\, B\in\T \,|\, \hom_\T(A,B)=0%
\text{ for all } A\in\mathcal{F} \,\}$.
	\end{itemize}
  \item The \emph{heart} of a t-structure $\mathcal{F}\subset\T$ is the
	full subcategory $\A=\mathcal{F}\cap\mathcal{F}^\perp[1]$.
  \item A t-structure $\mathcal{F}\subset\T$ is said to be
	\emph{bounded} if
	$\T=\bigcup_{i,j\in\Z}\mathcal{F}[i]\cap\mathcal{F}^\perp[j]$.
 \end{enumerate}
\end{df}
\begin{df}
 Let $\A \subset \T$ be a full abelian subcategory.
 \begin{enumerate}
  \item The Grothendieck group $K(\A)$ of $\A$ is the quotient group of
	the free abelian group generated by all isomorphism classes in
	$\A$ over the subgroup generated by the elements of the form
	$[A]+[B]-[C]$ for each short exact sequence
	$0\rightarrow A\rightarrow C\rightarrow B\rightarrow 0$ in $\A$.
  \item A \emph{stability function} on $\A$ is a group homomorphism
	$Z:K(\A)\rightarrow\C$ satisfying $Z([E])=m(E)\,e^{i\pi\phi(E)}$
	where $m(E)>0$ and $0<\phi(E)\leq 1$ for each nonzero object
	$E\in\A$. We call $\phi(E)$ the \emph{phase} of $E$.
  \item An object $E\in\A$ is said to be \emph{semistable} if
	$\phi(F)\leq\phi(E)$ for each subobject $F\subset E$.
  \item A stability function $Z$ on $\A$ is said to have the
	\emph{Harder-Narasimhan property} if for every nonzero object
	$E\in\A$, there is a finite chain of subobjects
	\[
	 0=E_0\subset E_1 \subset\dots\subset E_{n-1} \subset E_n=E
	\]
	whose factors $F_j=E_j/E_{j-1}$ are semistable objects with
	\[
	 \phi(F_1)>\phi(F_2)>\dots>\phi(F_n).
	\]
 \end{enumerate}
\end{df}
\begin{thm}
 \cite[Proposition 5.3]{bri} \label{briprop53}
 To give a stability condition on a triangulated category $\T$ is
 equivalent to giving a bounded t-structure on $\T$ and a stability
 function on its heart with the Harder-Narasimhan property.
\end{thm}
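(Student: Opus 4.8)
The plan is to exhibit, in both directions, an explicit passage between the two kinds of data and then to check that the two passages are mutually inverse. Starting from a stability condition $\sigma=(Z,\P)$, for an interval $I\subset\R$ write $\P(I)$ for the full subcategory whose objects are $0$ together with those $E$ all of whose Harder--Narasimhan factors have phase in $I$. First I would verify that $\mathcal{F}:=\P((0,\infty))$ is a bounded t-structure: the inclusion $\mathcal{F}[1]=\P((1,\infty))\subset\mathcal{F}$ is immediate from Br2; for an arbitrary $X$, cutting its HN filtration (Br3) at phase $0$ produces a triangle $A\to X\to B$ with $A\in\P((0,\infty))$ and $B\in\P((-\infty,0])$; and Br1, applied to HN factors, shows on the one hand that $\hom_\T(\mathcal{F},B)=0$, so $B\in\mathcal{F}^\perp$, and on the other (if $Y$ has an HN factor of positive phase, then its top HN factor lies in $\mathcal{F}$ and maps nonzero to $Y$) that $\mathcal{F}^\perp=\P((-\infty,0])$. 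Boundedness is precisely the finiteness of HN filtrations. The heart is then $\A=\mathcal{F}\cap\mathcal{F}^\perp[1]=\P((0,1])$, which is abelian by the general theory of t-structures; by Br4 the restriction $Z|_{K(\A)}$ sends each nonzero object of $\A$ to a point with argument in $(0,\pi]$, so it is a stability function; a Hom-vanishing argument from Br1 identifies its semistable objects of phase $\phi$ with the objects of $\P(\phi)$, and its HN property is a restatement of Br3.

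For the converse, start from a bounded t-structure with heart $\A$ and a stability function $Z$ on $\A$ with the HN property. Since the t-structure is bounded, the cohomology functors $H^i\colon\T\to\A$ are defined and each object has only finitely many nonzero $H^i$, so $[E]\mapsto\sum_i(-1)^i[H^i(E)]$ identifies $K(\T)$ with $K(\A)$ and lets us regard $Z$ as a homomorphism $K(\T)\to\C$. For $\phi\in(0,1]$ put $\P(\phi)=\{0\}\cup\{$semistable objects of $\A$ of phase $\phi\}$ and extend to all $\phi\in\R$ by $\P(\phi+n):=\P(\phi)[n]$, so that Br2 and Br4 hold by construction. For Br1 with $\phi_1>\phi_2$, write $\phi_i=\psi_i+n_i$ with $\psi_i\in(0,1]$, $n_i\in\Z$, so $A_i\cong A_i'[n_i]$ for semistable $A_i'\in\A$ of phase $\psi_i$ and $\hom_\T(A_1,A_2)\cong\hom_\T(A_1',A_2'[n_2-n_1])$; the vanishing of negative self-extensions of the heart kills this unless $n_2\geq n_1$, and then $\phi_1>\phi_2$ forces $n_1=n_2$ and $\psi_1>\psi_2$, in which case a nonzero $f\colon A_1'\to A_2'$ in $\A$ would have $\mathrm{im}\,f$ simultaneously a quotient of $A_1'$ (phase $\geq\psi_1$) and a subobject of $A_2'$ (phase $\leq\psi_2$), a contradiction. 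For Br3, filter a nonzero $E$ by the truncations $\tau_{\leq i}E$, whose successive cones are the shifted cohomologies $H^i(E)[-i]\in\A[-i]=\P((-i,1-i])$, then refine each such cone by the HN filtration of $H^i(E)$ in $\A$; concatenating these and observing that the windows $(-i,1-i]$ are disjoint and decrease as $i$ grows yields an HN filtration of $E$ in $\T$ with strictly decreasing phases.

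Finally I would check the two constructions are inverse to one another: the t-structure built from the $(\A,Z)$ attached to a given $\sigma$ has heart $\P((0,1])$ and induced stability function $Z$ again, while the $\P(\phi)$ recovered from a given t-structure are, for $\phi\in(0,1]$, exactly the semistable objects of phase $\phi$, and hence, via Br2, agree with the original slices in all degrees. The step I expect to be the main obstacle is the claim in the first direction that $\P((0,\infty))$ really is a t-structure, together with the identification of the semistable objects of the induced stability function with the $\P(\phi)$; both rest on the good behaviour of the interval subcategories $\P(I)$ (closure under the relevant extensions, and HN filtrations that cleanly split off the pieces lying in any given subinterval), which is exactly the quasi-abelian analysis of $\P((\phi,\phi+1))$ carried out in \cite{bri}, and which I would import rather than reprove. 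Granting that, the Grothendieck-group identification, the phase bookkeeping for Br3, and the inverse check are all routine.
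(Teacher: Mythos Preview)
The paper does not give its own proof of this theorem: it is quoted verbatim as \cite[Proposition~5.3]{bri} and used as a black box, so there is nothing in the present paper to compare against. Your outline is correct and is essentially the argument Bridgeland gives in \cite{bri}: from $(Z,\P)$ one takes the t-structure $\P((0,\infty))$ with heart $\P((0,1])$ and restricts $Z$; from $(\A,Z)$ one defines slices in $(0,1]$ via semistables, extends by shift, and builds HN filtrations by refining the cohomology filtration of the t-structure. The only substantive point you flag---that $\P((0,\infty))$ is a bounded t-structure and that the induced semistables agree with the $\P(\phi)$---is indeed the content of the slicing/quasi-abelian analysis in \cite{bri}, and citing it is appropriate.
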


\subsection{Actions on the space of stability conditions}
\label{subsectionact}

Bridgeland defined a right action of the group
$\widetilde{\mathrm{GL}^+(2,\mathbb{R})}$, the universal cover of
$\mathrm{GL}^+(2,\mathbb{R})$, and a left action of $\aut(\T)$, the
group of exact autoequivalences of $\T$, on $\stab(\T)$.
These actions commute with each other.

Note that $\widetilde{\mathrm{GL}^+(2,\mathbb{R})}$ can be thought of as
the set of pairs $(T,f)$, where $f:\mathbb{R}\rightarrow\mathbb{R}$
is an increasing map with $f(\phi+1)=f(\phi)+1$ and
$T\in\mathrm{GL}^+(2,\mathbb{R})$, such that
$T(e^{2i\pi\phi})/|T(e^{2i\pi\phi})|=e^{2i\pi f(\phi)}$
where we identify $\C$ and $\R^2$.
For $(Z,\P)\in\stab(\T)$, $(Z',\P')=(Z,\P)\cdot(T,f)$ is defined as
$Z'=T^{-1}\circ Z$ and $\P'(\phi)=\P(f(\phi))$.

Next, note that an element $G\in\aut(\T)$ induces an automorphism $g$
of $K(\T)$.
For $\sigma=(Z,\P)$, $G\cdot\sigma$ is defined as
$(Z\circ g^{-1},\P'')$ where $\P''(\phi)=G(\P(\phi))$.

The action of $\C$ on $\stab(\T)$, which is a part of
$\widetilde{\mathrm{GL}^+(2,\R)}$-action, is described in
Okada~\cite[Definition 2.3]{oka};
\begin{center}
 $z\cdot (Z,\P)=(Z',\P')$ where
 $Z'(E)=e^z\,Z(E)$ and $\P'(\phi)=\P(\phi-\frac{b}{\pi})$
\end{center}
for $z=a+bi\in\C$ and $(Z,\P)\in\stab(\T)$.

\subsection{Stability conditions and exceptional objects}
\label{subsection22}

Macr{\`{\i}} discussed the spaces of stability conditions on
a triangulated category $\T$ generated by finitely many exceptional
objects in \cite{mac}.
\begin{df}
 \begin{enumerate}
  \item An object $E$ in $\T$ is called	\emph{exceptional} if
	\[
 	 \hom^k_\T(E,E) =
	 \begin{cases}
	  \C & (k=0) \\
	  0 & (\text{otherwise})
	 \end{cases}.
	\]
  \item A sequence $(E_1,E_2,\dots,E_n)$ is called
	an \emph{exceptional collection} if each $E_i$ is exceptional
	and $\hom^k_\T(E_i,E_j)=0$ for all $k$ and $i>j$.
	In particular, $(E_1,E_2)$, consisting of two exceptional
	objects, is called an \emph{exceptional pair}.
  \item An exceptional collection is called \emph{complete} if $\{E_i\}$
	generates $\T$ by shifts and extensions.
  \item An exceptional collection is called \emph{Ext-exceptional}
	if $\hom^{\leq 0}(E_i,E_j)=0$ for all $i\neq j$.
 \end{enumerate}
\end{df}

\begin{df}
 \begin{enumerate}
  \item Let $E$ and $F$ be exceptional objects.
	We define $\mathcal{L}_EF$ and $\mathcal{R}_FE$	by the following
	distinguished triangles:
	\[
	 \mathcal{L}_EF \longrightarrow \hom^\bullet(E,F) \otimes E
	 \longrightarrow F,
	\]
	\[
	 E \longrightarrow \hom^\bullet(E,F)^* \otimes F
	 \longrightarrow \mathcal{R}_FE.
	\]
	We call $\mathcal{L}_EF$ \emph{the left mutation} of $F$ by $E$
	and $\mathcal{R}_FE$ \emph{the right mutation} of $E$ by $F$.
  \item Let $\mathcal{E}=(E_1,\dots,E_n)$ be an exceptional collection.
	We define $\mathcal{L}_i\mathcal{E}$ and
	$\mathcal{R}_i\mathcal{E}$ as the sequences
	\begin{align*}
	 \mathcal{L}_i\mathcal{E}&=
	 (E_1,\dots,E_{i-1},\mathcal{L}_{E_i}E_{i+1},E_i,
	  E_{i+2},\dots,E_n), \\
	 \mathcal{R}_i\mathcal{E}&=
	 (E_1,\dots,E_{i-1},E_{i+1},\mathcal{R}_{E_{i+1}}E_{i},
	  E_{i+2},\dots,E_n).
	\end{align*}
	We call $\mathcal{L}_i\mathcal{E}$
	(resp. $\mathcal{R}_i\mathcal{E}$)
	\emph{the left (resp. right) mutation} of $\mathcal{E}$.
 \end{enumerate}
\end{df}
\begin{prop}
 (See for example \cite{bon}.)
 \begin{enumerate}
  \item A mutation of an exceptional collection is also an exceptional
	collection.
  \item If an exceptional collection generates $\T$ then its mutation
	also generates $\T$.
  \item \label{mutbra}
	The following relations hold:
	\begin{align*}
	 \mathcal{L}_i\mathcal{R}_i=\mathcal{R}_i\mathcal{L}_i=\text{id},
	 \quad & \mathcal{R}_i\mathcal{R}_{i+1}\mathcal{R}_i
	 =\mathcal{R}_{i+1}\mathcal{R}_i\mathcal{R}_{i+1}, \\
	 &\quad\text{and}\quad
	 \mathcal{L}_i\mathcal{L}_{i+1}\mathcal{L}_i
	 =\mathcal{L}_{i+1}\mathcal{L}_i\mathcal{L}_{i+1}. 
	\end{align*}
 \end{enumerate}
\end{prop}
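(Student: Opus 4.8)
The plan is to reduce all three assertions to statements about a single exceptional pair $(E,F)$ — so $\hom^\bullet_\T(F,E)=0$ — or, for the braid relations, an exceptional triple, since $\mathcal{L}_i$ and $\mathcal{R}_i$ alter only the entries in positions $i,i+1$ (and $i,i+1,i+2$, respectively). For the heart of~(1), apply $\hom^\bullet_\T(E,-)$ to the defining triangle $\mathcal{L}_EF\to\hom^\bullet(E,F)\otimes E\to F$: since $\hom^\bullet_\T(E,E)=\C$ the middle term becomes $\hom^\bullet(E,F)$ and the map to $\hom^\bullet_\T(E,F)$ is the identity, so $\hom^\bullet_\T(E,\mathcal{L}_EF)=0$, which is exactly the vanishing that makes $(\mathcal{L}_EF,E)$ an exceptional pair. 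To see $\mathcal{L}_EF$ is exceptional, apply $\hom^\bullet_\T(-,\mathcal{L}_EF)$ to the triangle — the middle term now vanishes, so $\hom^\bullet_\T(\mathcal{L}_EF,\mathcal{L}_EF)$ is a shift of $\hom^\bullet_\T(F,\mathcal{L}_EF)$ — and then apply $\hom^\bullet_\T(F,-)$, using $\hom^\bullet_\T(F,E)=0$ and $\hom^\bullet_\T(F,F)=\C$, to get $\hom^\bullet_\T(F,\mathcal{L}_EF)\cong\C$ up to shift, the shift being forced to degree $0$ by the identity morphism. The claim for $\mathcal{R}_FE$ is the same argument in $\T^{\mathrm{op}}$. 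To pass from a pair to a general exceptional collection, note that for $j\notin\{i,i+1\}$ the long exact sequences obtained from the triangle express $\hom^\bullet_\T(\mathcal{L}_{E_i}E_{i+1},E_j)$ and $\hom^\bullet_\T(E_j,\mathcal{L}_{E_i}E_{i+1})$ in terms of $\hom^\bullet_\T(E_i,E_j),\hom^\bullet_\T(E_{i+1},E_j)$ when $j<i$, or $\hom^\bullet_\T(E_j,E_i),\hom^\bullet_\T(E_j,E_{i+1})$ when $j>i+1$, all of which vanish by hypothesis.

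Assertion~(2) is immediate: the triangulated subcategory generated by the entries of $\mathcal{L}_i\E$ contains $\mathcal{L}_{E_i}E_{i+1}$ and $E_i$, hence the cone $E_{i+1}$ of $\mathcal{L}_{E_i}E_{i+1}\to\hom^\bullet(E_i,E_{i+1})\otimes E_i$, hence every $E_j$, hence all of $\T$; the $\mathcal{R}_i$ case is symmetric. For~(3), the identities $\mathcal{L}_i\mathcal{R}_i=\mathcal{R}_i\mathcal{L}_i=\mathrm{id}$ reduce to $\mathcal{L}_F(\mathcal{R}_FE)\cong E$ and $\mathcal{R}_E(\mathcal{L}_EF)\cong F$ for an exceptional pair $(E,F)$. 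For the first, apply $\hom^\bullet_\T(F,-)$ to $E\to\hom^\bullet(E,F)^*\otimes F\to\mathcal{R}_FE$; since $\hom^\bullet_\T(F,E)=0$ this yields $\hom^\bullet_\T(F,\mathcal{R}_FE)\cong\hom^\bullet(E,F)^*$, so the triangle defining $\mathcal{L}_F(\mathcal{R}_FE)$ and the triangle defining $\mathcal{R}_FE$ share the same middle term $\hom^\bullet(E,F)^*\otimes F$ and the same third term $\mathcal{R}_FE$; once one checks the two evaluation morphisms are identified, uniqueness of cones forces $\mathcal{L}_F(\mathcal{R}_FE)\cong E$, and the second identity is dual.

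The braid relations are the delicate part. Restricting to an exceptional triple $(A,B,C)$, a direct computation shows that both $\mathcal{R}_i\mathcal{R}_{i+1}\mathcal{R}_i$ and $\mathcal{R}_{i+1}\mathcal{R}_i\mathcal{R}_{i+1}$ return a collection of the form $(C,\mathcal{R}_CB,\ast)$, so the whole content is the single isomorphism $\mathcal{R}_C\mathcal{R}_BA\cong\mathcal{R}_{\mathcal{R}_CB}\mathcal{R}_CA$, and dually for the $\mathcal{L}$-relation. I would establish this by assembling the defining triangles of $\mathcal{R}_BA$, $\mathcal{R}_CB$, $\mathcal{R}_CA$ and their mutations into an octahedron and reading off the isomorphism as an identification of two cones; tracking those cones along with the shifts of the graded Hom-spaces, while keeping every structure morphism compatible, is the main obstacle — this is exactly why the statement is traditionally cited (as \cite{bon}) rather than reproved. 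Note finally that the exceptional collections attached to the two-vertex quivers $P_n$ have length two, so only the exceptional-pair statements in~(1) and the identity $\mathcal{L}_1\mathcal{R}_1=\mathcal{R}_1\mathcal{L}_1=\mathrm{id}$ are used in the sequel; the braid relations may be quoted wholesale.
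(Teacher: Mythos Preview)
The paper does not prove this proposition at all: it is stated with the parenthetical ``(See for example \cite{bon})'' and no argument is given. So there is nothing to compare your approach against --- the author simply quotes Bondal.

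Your sketch is the standard argument and is essentially correct. A couple of small points worth tightening if you ever write it out in full: the phrase ``uniqueness of cones'' is slightly loose, since cones are only unique up to (non-canonical) isomorphism; what you actually need is that the two candidate triangles share two vertices and the map between them, which is enough. Your treatment of the braid relation is honest --- reducing to $\mathcal{R}_C\mathcal{R}_BA\cong\mathcal{R}_{\mathcal{R}_CB}\mathcal{R}_CA$ and invoking an octahedral diagram is exactly how Bondal does it, and you are right that tracking the structure maps is the real work. Your closing observation that only the pair case and $\mathcal{L}_1\mathcal{R}_1=\mathrm{id}$ are actually used for $P_n$ is accurate and a good sanity check.
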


For an exceptional collection $(E_1,\dots,E_n)$, we denote by
$\langle E_1,\dots,E_n \rangle$ the smallest extension-closed full
subcategory of $\T$ containing $\{E_1,\dots,E_n\}$.

\begin{lem}
 \cite[Lemma 3.14]{mac} \label{mac314}
 Let $(E_1,\dots,E_n)$ be a complete Ext-exceptional collection on $\T$.
 Then $\langle E_1,\dots,E_n \rangle$ is the heart of a bounded
 t-structure on $\T$.
\end{lem}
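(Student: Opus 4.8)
The plan is to construct the bounded t-structure directly. Set $\mathcal F:=\langle\,E_i[k]:1\le i\le n,\ k\ge 0\,\rangle$, the smallest extension-closed full subcategory of $\T$ containing all these shifts, and $\mathcal B:=\langle\,E_i[k]:1\le i\le n,\ k\le -1\,\rangle$. One must check three things: (i) $\mathcal F[1]\subset\mathcal F$, which is immediate because $\mathcal F$ is closed under exactly the shifts out of which it is built; (ii) $\hom_\T(A,B)=0$ for all $A\in\mathcal F$ and $B\in\mathcal B$, whence $\mathcal B\subset\mathcal F^\perp$; and (iii) for every $X\in\T$ a triangle $A\to X\to B\to A[1]$ with $A\in\mathcal F$ and $B\in\mathcal B$. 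Granting (i)--(iii), $\mathcal F$ is a t-structure with $\mathcal F^\perp=\mathcal B$; it is bounded because completeness of the collection makes every object of $\T$ a finite iterated extension of shifts $E_i[k]$, hence an object of some $\mathcal F[p]\cap\mathcal F^\perp[q]$; and the heart $\mathcal F\cap\mathcal F^\perp[1]$ will be seen to be $\langle E_1,\dots,E_n\rangle$.

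Step (ii) is a d\'evissage, and it is where the hypotheses are used. It suffices to treat $A=E_i[l]$ ($l\ge 0$) and $B=E_j[k]$ ($k\le -1$), because feeding the triangles defining iterated extensions into the cohomological functors $\hom_\T(-,B)$ and $\hom_\T(A,-)$ propagates the vanishing from the generators to all of $\mathcal F$ and $\mathcal B$. For such $A,B$ one has $\hom_\T(A,B)=\hom^{k-l}_\T(E_i,E_j)$ with $k-l<0$, and $\hom^{<0}_\T(E_i,E_j)$ vanishes: for $i>j$ because $(E_1,\dots,E_n)$ is an exceptional collection, for $i=j$ because $E_i$ is exceptional, and for $i<j$ because the collection is Ext-exceptional (so even $\hom^{\le 0}_\T(E_i,E_j)=0$). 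Since $\langle E_1,\dots,E_n\rangle\subset\mathcal F$ and $\langle E_1,\dots,E_n\rangle[-1]\subset\mathcal B\subset\mathcal F^\perp$, we get $\langle E_1,\dots,E_n\rangle\subset\mathcal F\cap\mathcal F^\perp[1]$; the reverse inclusion follows once (iii) is available, since an object of $\mathcal F\cap\mathcal F^\perp[1]$ is simultaneously a finite iterated extension of shifts $E_i[k]$ with $k\ge 0$ and one with $k\le 0$, and taking cohomology with respect to the t-structure forces every occurring $k$ to equal $0$.

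Step (iii) is the main obstacle. Completeness provides a finite filtration $0=X_0\to X_1\to\dots\to X_m=X$ whose cones are shifts $E_{i_j}[k_j]$; the task is to reshuffle it so that all factors with $k_j\ge 0$ lie below all factors with $k_j\le -1$, for then $A:=X_{m'}$ (the end of the $k\ge 0$ part) and $B:=X/X_{m'}$ do the job. I would induct on the pair (length, number of inversions): given two adjacent factors $E_a[p]$ (lower, $p\le -1$) and $E_b[q]$ (upper, $q\ge 0$) in the wrong order, the obstruction to swapping them lies in $\hom^{p+1-q}_\T(E_b,E_a)$; since $p+1-q\le 0$ this vanishes by the vanishing used in (ii) unless $p=-1$, $q=0$ and $a=b$, and in that exceptional case $\hom^0_\T(E_a,E_a)=\C$ together with $\hom^1_\T(E_a,E_a)=0$ forces the middle term of the relevant extension to be either $0$ (delete the pair, shortening the filtration) or $E_a[-1]\oplus E_a$ (the two factors then commute). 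Iterating terminates and yields the triangle of (iii). A more structural alternative, avoiding this hands-on sorting, is to induct on $n$: $(E_1,\dots,E_{n-1})$ is a complete exceptional collection on the admissible subcategory $\D':=\langle E_1,\dots,E_{n-1}\rangle^{\mathrm{tri}}$, the pair $(\D',\langle E_n\rangle^{\mathrm{tri}})$ is a semiorthogonal decomposition of $\T$ (because $\hom^\bullet_\T(E_n,E_i)=0$ for $i<n$), and one glues the inductively-obtained heart on $\D'$ with the standard heart on $\langle E_n\rangle^{\mathrm{tri}}\simeq\D^b(\mathrm{Vect})$ as in Beilinson--Bernstein--Deligne; the remaining point, identifying the glued heart with $\langle E_1,\dots,E_n\rangle$, again rests on $\hom^{\le 0}_\T(E_i,E_n)=\hom^{\le 0}_\T(E_n,E_i)=0$.
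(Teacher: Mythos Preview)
The paper does not prove this lemma; it merely cites \cite[Lemma~3.14]{mac} and uses the result. There is therefore no in-paper argument to compare your proposal against.

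Your proposal is essentially correct. The vanishing $\hom^{\le 0}_\T(E_i,E_j)=0$ for $i\ne j$ (combining exceptionality for $i>j$ with Ext-exceptionality for $i<j$) is precisely what powers both the orthogonality in (ii) and the reshuffling in (iii), and you isolate this cleanly. In the sorting step the swap of adjacent factors $E_a[p]$, $E_b[q]$ with $p\le -1<0\le q$ reduces the inversion count by exactly one when it succeeds, and the degenerate case $a=b,\ p=-1,\ q=0$ with nonzero class gives $M=0$ and shortens the filtration; the lexicographic induction on (length, inversions) terminates as claimed.

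One point is thinner than the rest: the reverse inclusion $\mathcal F\cap\mathcal F^\perp[1]\subset\langle E_1,\dots,E_n\rangle$. Knowing that $X$ admits \emph{some} filtration with all shifts $\ge 0$ and \emph{some other} with all shifts $\le 0$ does not by itself force every occurring shift to be $0$. What you need is the finer version of your own sorting argument, ordering the factors by decreasing $k$ (the same case analysis goes through, the only nonvanishing obstruction again being $a=b$ with adjacent shifts). The sorted filtration then coincides, by uniqueness of truncation triangles, with the Postnikov/cohomology filtration of the t-structure, so $X\in\mathcal A$ forces only the $k=0$ block to survive and that block lies in $\langle E_1,\dots,E_n\rangle$. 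Your alternative route via semiorthogonal decomposition and BBD gluing handles this identification more cleanly and is closer to how such statements are usually proved in the literature.
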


\begin{lem}
 \cite[Lemma 3.16]{mac} \label{maclem316}
 Let $(E_1,\dots,E_n)$ be a complete Ext-exceptional collection on $\T$
 and let $(Z,\P)$ be a stability condition on $\T$.
 Assume $E_1,\dots,E_n\in\P((0,1])$.
 Then $\langle E_1,\dots,E_n \rangle=\P((0,1])$ and $E_j$ is stable,
 for all $j=1,\dots,n$.
\end{lem}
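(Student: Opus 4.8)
The plan is to pin down the heart first and then read off stability from its internal structure. Set $\A=\langle E_1,\dots,E_n\rangle$. By Lemma~\ref{mac314}, $\A$ is the heart of a bounded t-structure on $\T$, and by Theorem~\ref{briprop53} so is $\P((0,1])$. The category $\P((0,1])$ is closed under extensions in $\T$ --- if $A\to B\to C\to A[1]$ is a triangle with $A,C\in\P((0,1])$, then $\phi_\sigma^+(B)\le\max(\phi_\sigma^+(A),\phi_\sigma^+(C))\le 1$ and $\phi_\sigma^-(B)\ge\min(\phi_\sigma^-(A),\phi_\sigma^-(C))>0$ --- and by hypothesis it contains every $E_j$; hence it contains the smallest extension-closed full subcategory with that property, namely $\A$, so $\A\subseteq\P((0,1])$. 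The first assertion then follows from the standard principle that a heart of a bounded t-structure contained in another heart must coincide with it. If I wanted that principle self-contained I would argue: for $0\ne B\in\P((0,1])$ whose $\A$-cohomology is concentrated in degrees $[q,p]$, if $p>0$ then, since $H^p_\A(B)\in\A\subseteq\P((0,1])$, the object $H^p_\A(B)[-p]$ sits in cohomological degree $p\ge1$ for the t-structure with heart $\P((0,1])$ while $B$ sits in degree $0$, so the canonical map $B\to H^p_\A(B)[-p]$ vanishes; the truncation triangle then splits, exhibiting the nonzero $H^p_\A(B)[-p]$ as a summand of $B$, which must lie in $\P((0,1])\cap\P((0,1])[-p]=0$ --- a contradiction; dually $q\ge0$, so $B\in\A$.

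The heart of the argument is the next step: showing that $\{E_1,\dots,E_n\}$ is exactly the set of simple objects of $\A$. Since the collection is Ext-exceptional, $\hom_\A(E_i,E_j)=\hom^0_\T(E_i,E_j)=0$ for $i\ne j$, while $\hom_\A(E_j,E_j)=\C$; thus the $E_j$ are pairwise non-isomorphic, nonzero, and each is a brick. I would also use that $\A$, being the extension closure of $\{E_1,\dots,E_n\}$, equals the class of objects admitting a finite filtration whose subquotients lie in $\{E_1,\dots,E_n\}$ (this class is full, extension-closed, contains the $E_j$, and is contained in any extension-closed subcategory containing them). Now if $0\ne F\subsetneq E_j$ in $\A$, the first nonzero term of a filtration of $F$ yields a nonzero map $E_k\to E_j$; Hom-orthogonality forces $k=j$, and the brick property makes this map an isomorphism, so $F=E_j$ --- contradiction. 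Hence each $E_j$ is simple in $\A$; conversely every simple object of $\A$ has a length-one filtration and therefore equals some $E_k$. So $\A$ is a finite-length abelian category with simple objects precisely $E_1,\dots,E_n$.

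Stability is then immediate. Fix $j$. As $E_j\in\P((0,1])=\A$, every Harder--Narasimhan factor of $E_j$ lies in $\P((0,1])=\A$, so each triangle of its HN filtration is a short exact sequence in $\A$ and the filtration is a chain of subobjects of $E_j$ in $\A$. The highest-phase factor is a nonzero subobject of $E_j$, hence equals $E_j$ by simplicity; therefore the HN filtration has length one and $E_j$ is semistable, say $E_j\in\P(\phi_j)$. Finally, a subobject of $E_j$ in the abelian category $\P(\phi_j)$ sits in a short exact sequence in $\P(\phi_j)$, i.e.\ a triangle in $\T$ all of whose terms lie in $\P(\phi_j)\subseteq\A$, hence a short exact sequence in $\A$; by simplicity of $E_j$ in $\A$ this subobject is $0$ or $E_j$, so $E_j$ is simple in $\P(\phi_j)$, i.e.\ stable.

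The step I expect to be the real obstacle is the identification of the $E_j$ with the simple objects of $\A$: it is there that the Ext-exceptional hypothesis (vanishing of $\hom^0$ between distinct members) has to be combined with the brick property of exceptional objects and with the extension-closure description of $\A$. The ``heart inside a heart'' equality of the first step is standard, and once the simple objects are known the stability argument is routine bookkeeping about how HN filtrations and the internal structure of the subcategories $\P(\phi)$ sit inside the ambient heart; the only points needing care are that short exact sequences in $\P(\phi)$ are precisely the triangles with all three terms in $\P(\phi)$ and that $\P((0,1])$ is extension-closed.
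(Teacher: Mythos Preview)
The paper does not give its own proof of this lemma: it is quoted verbatim as \cite[Lemma~3.16]{mac} and used as a black box. So there is nothing in the paper to compare your argument against.

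That said, your argument is essentially correct and follows the natural line (extension-closure gives $\A\subseteq\P((0,1])$, equality of hearts, identification of the $E_j$ as the simples of $\A$, then stability). One small slip: in your optional ``heart inside a heart'' argument, from the vanishing of $B\to H^p_\A(B)[-p]$ in the truncation triangle $\tau^{<p}_\A B\to B\to H^p_\A(B)[-p]$ you do \emph{not} get that $H^p_\A(B)[-p]$ is a summand of $B$; vanishing of the middle map in a triangle $X\to Y\to Z$ gives $X\cong Y\oplus Z[-1]$, not $Y\cong X\oplus Z$. You still get a contradiction (e.g.\ $\tau^{<p}_\A B$ would acquire nonzero $\A$-cohomology in degree $p+1$), so the conclusion stands, but the sentence as written is not quite right. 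Since you already flag this step as a standard principle, you could simply cite it and drop the detailed justification.
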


Given a complete exceptional collection $(E_1,\dots,E_n)$ on $\T$,
the Grothendieck group $K(\T)$ is a free abelian group of finite rank
isomorphic to $\Z^{\oplus n}$ generated by the isomorphism classes
$[E_i]$.
Macr{\`{\i}} showed how to construct stability conditions from an
exceptional collection.

\begin{lem}
 \cite{mac} \label{maclemma}
 Let $\E=(E_1,\dots,E_n)$ be a complete exceptional collection on $\T$.
 A stability condition, as a pair $(Z,\mathcal{A})$ of a heart
 $\mathcal{A}$ of a bounded t-structure and a stability function $Z$ on
 $\mathcal{A}$, is constructed by the following steps:
 \begin{itemize}
  \item Choose integers $p_1,\dots,p_n$ such that
	$(E_1[p_1],\dots,E_n[p_n])$ is Ext-exceptional.
  \item Denote $\mathrm{Q}_p:=\langle E_1[p_1],\dots,E_n[p_n] \rangle$,
	which is the heart of a bounded	t-structure.
	(Lemma~\ref{mac314}.)
  \item Pick $n$ points $z_1,\dots,z_n$ in
	$H=\{\,re^{i\pi\phi}\in\C\,|\,r>0,\,0<\phi\leq 1\}$ and define a
	homomorphism  $Z_p:K(Q_p) \rightarrow \C$ as $Z_p(E_i[p_i])=z_i$.
	Then $Z_p$ is a stability function with the Harder-Narasimhan
	property.
  \item The pair $(Z_p,\mathrm{Q}_p)$ is the stability condition.
	(Theorem \ref{briprop53}.)
 \end{itemize}
\end{lem}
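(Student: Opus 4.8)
Lemma (\cite{mac}, restated as Lemma~\ref{maclemma} above). The construction is to be carried out as a sequence of verifications.

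The plan is to check, in order, the four bullet points of the construction, reducing each to a result already available in the excerpt. \emph{First}, the existence of the shifts $p_1,\dots,p_n$: starting from a complete exceptional collection $\E=(E_1,\dots,E_n)$, the graded space $\hom^\bullet(E_i,E_j)$ is finite dimensional for each pair, so it is concentrated in finitely many degrees; since $\hom^k(E_i,E_j)=0$ for $i>j$ and all $k$, only the pairs $i<j$ impose constraints, and one can choose $p_1 \gg p_2 \gg \dots \gg p_n$ decreasing fast enough so that $\hom^{\le 0}(E_i[p_i],E_j[p_j]) = \hom^{\le p_j - p_i}(E_i,E_j)=0$ for every $i<j$, which is exactly the Ext-exceptional condition. (For $i>j$ it holds automatically, and for $i=j$ exceptionality gives it.) \emph{Second}, by Proposition~\ref{mutbra}'s ambient statement a shift of an exceptional collection is exceptional and still generates $\T$, so $(E_1[p_1],\dots,E_n[p_n])$ is a complete Ext-exceptional collection, and Lemma~\ref{mac314} gives directly that $\mathrm{Q}_p := \langle E_1[p_1],\dots,E_n[p_n]\rangle$ is the heart of a bounded t-structure on $\T$.

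\emph{Third}, I would check that $Z_p$ is well defined and is a stability function with the Harder--Narasimhan property. Well-definedness: since $\E$ is complete, $\{[E_i[p_i]]\} = \{(-1)^{p_i}[E_i]\}$ is a $\Z$-basis of $K(\T)=K(\mathrm{Q}_p)$, so prescribing $Z_p(E_i[p_i])=z_i$ extends uniquely to a homomorphism $K(\mathrm{Q}_p)\to\C$. That $Z_p$ is a stability function amounts to showing $Z_p([E])\in H\setminus\{0\}$, i.e.\ lies in the strict upper half plane union the negative real axis, for every nonzero $E\in\mathrm{Q}_p$. Every object of $\mathrm{Q}_p$ is built from the $E_i[p_i]$ by finitely many extensions, so $[E]=\sum_i m_i [E_i[p_i]]$ with $m_i\ge 0$ integers not all zero (one needs here that the class of a nonzero object of an extension-closed heart generated by simples has nonnegative coordinates — this follows because each $E_i[p_i]$ is simple in $\mathrm{Q}_p$ by Lemma~\ref{maclem316}, so $\mathrm{Q}_p$ is a finite-length abelian category with these simple objects and the Jordan--Hölder multiplicities are the $m_i$). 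Since each $z_i\in H$ has argument in $(0,\pi]$, the sum $\sum m_i z_i$ again has argument in $(0,\pi]$ and is nonzero, so $Z_p([E])\in H$. The Harder--Narasimhan property then holds because $\mathrm{Q}_p$ is of finite length: a standard argument (cf.\ \cite[Proposition~2.4]{bri}) shows any stability function on a finite-length abelian category automatically has the HN property, there being no infinite chains to obstruct the filtration.

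\emph{Fourth}, the pair $(Z_p,\mathrm{Q}_p)$ assembles into a genuine stability condition on $\T$ by Theorem~\ref{briprop53}: a bounded t-structure together with a stability function on its heart possessing the HN property is precisely the data of a stability condition, and local finiteness is immediate since $\mathrm{Q}_p$, being of finite length, has all its subquotient categories $\P(\phi-\varepsilon,\phi+\varepsilon)$ of finite length. The main obstacle in making this rigorous is the third step — specifically the claim that the Grothendieck class of every nonzero object of $\mathrm{Q}_p$ is a \emph{nonnegative} integer combination of the $[E_i[p_i]]$; this is what forces $Z_p$ into the upper half plane and hence makes it a stability function at all. Everything rests on knowing the $E_i[p_i]$ are the simple objects of the finite-length category $\mathrm{Q}_p$, which is supplied by Lemma~\ref{maclem316} once one observes the $E_i[p_i]$ lie in $\P((0,1])$ for the t-structure they generate. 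The remaining points are either direct invocations of the cited lemmas or routine bookkeeping with degrees of $\hom^\bullet$.
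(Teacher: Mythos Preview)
The paper does not give a separate proof of this lemma; it is quoted from Macr{\`\i}, and the justifications are already embedded in the statement via the references to Lemma~\ref{mac314} and Theorem~\ref{briprop53}. Your write-up therefore cannot really be compared to a ``paper's proof'' --- you are supplying the details the paper delegates to \cite{mac}, and your overall structure (verify the four bullets in turn) is exactly the intended one.

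Your argument is essentially correct, but there is one genuine circularity you should remove. In the third step you justify the nonnegativity of the coefficients $m_i$ by invoking Lemma~\ref{maclem316} to conclude that the $E_i[p_i]$ are simple in $\mathrm{Q}_p$. That lemma, however, takes as input a stability condition $(Z,\P)$ already in hand and asserts stability with respect to it; here you are in the middle of \emph{constructing} the stability condition, so you cannot appeal to it. (Moreover, ``stable'' there means simple in some slice $\P(\phi)$, not simple in the heart $\P((0,1])$.) Fortunately the parenthetical is unnecessary: the sentence preceding it already contains the correct argument. By definition $\mathrm{Q}_p=\langle E_1[p_1],\dots,E_n[p_n]\rangle$ is the smallest extension-closed full subcategory containing the $E_i[p_i]$, so every nonzero object $E\in\mathrm{Q}_p$ admits a finite filtration whose successive cones are among the $E_i[p_i]$; counting occurrences gives $[E]=\sum_i m_i[E_i[p_i]]$ with $m_i\ge 0$, and $E\neq 0$ forces at least one $m_i>0$. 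That alone shows $Z_p([E])=\sum_i m_i z_i\in H$, and the Harder--Narasimhan property then follows, as you say, from finite length (every object has a finite filtration by the generators). With that parenthetical deleted, your proof is clean.
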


Let $\Theta_\E$ be the subset of $\stab(\T)$ consisting of all stability
conditions obtained by Lemma \ref{maclemma}, up to the action of
$\widetilde{\mathrm{GL}^+(2,\mathbb{R})}$.
Lemma \ref{maclem316} implies the following immediately.

\begin{lem}
 \cite{mac}
 Let $\E=(E_1,\dots,E_n)$ be a complete exceptional collection.
 Then $E_i$'s are stable in each stability condition of $\Theta_\E$.
\end{lem}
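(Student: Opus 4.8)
The plan is to obtain this as a direct consequence of Lemma~\ref{maclem316}, once two elementary invariance properties of the notion ``stable object'' are in place. The first reduction is to get rid of the $\widetilde{\mathrm{GL}^+(2,\R)}$-action: if $\sigma=(Z,\P)$ and $\sigma\cdot(T,f)=(Z',\P')$ with $\P'(\phi)=\P(f(\phi))$, then $\P'(\phi)$ is literally the additive category $\P(f(\phi))$, so an object is a simple object of $\P'(\phi)$ exactly when it is a simple object of $\P(f(\phi))$; thus an object is stable in $\sigma$ if and only if it is stable in $\sigma\cdot(T,f)$. Consequently it suffices to prove that each $E_i$ is stable in a stability condition $\sigma=(Z_p,\mathrm{Q}_p)$ produced \emph{directly} by Lemma~\ref{maclemma}, before applying the group action.

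So I would fix integers $p_1,\dots,p_n$ making $\E'=(E_1[p_1],\dots,E_n[p_n])$ a complete Ext-exceptional collection, fix $z_1,\dots,z_n\in H$, and let $\sigma=(Z_p,\mathrm{Q}_p)$ be the corresponding stability condition, with $\mathrm{Q}_p=\langle E_1[p_1],\dots,E_n[p_n]\rangle$. Under the correspondence of Theorem~\ref{briprop53}, the slicing $\P$ of $\sigma$ satisfies $\P((0,1])=\mathrm{Q}_p$, so in particular $E_i[p_i]\in\P((0,1])$ for every $i$. Now Lemma~\ref{maclem316}, applied to the complete Ext-exceptional collection $\E'$ together with the stability condition $\sigma$, has exactly the hypothesis ``$E_1[p_1],\dots,E_n[p_n]\in\P((0,1])$'' satisfied, and therefore yields that each $E_i[p_i]$ is stable in $\sigma$.

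Finally I would pass from $E_i[p_i]$ back to $E_i$: if $E_i[p_i]$ is a simple object of $\P(\phi)$ for some $\phi$, then by axiom Br2 we have $E_i=(E_i[p_i])[-p_i]\in\P(\phi)[-p_i]=\P(\phi-p_i)$, and since $[-p_i]$ is an exact autoequivalence of $\T$ restricting to an equivalence $\P(\phi)\to\P(\phi-p_i)$, the object $E_i$ is again simple in $\P(\phi-p_i)$, i.e.\ stable in $\sigma$. Combining this with the first paragraph shows $E_i$ is stable in every stability condition of $\Theta_\E$. I do not expect a genuine obstacle in this argument; the only points that merit a sentence of justification are the identification $\P((0,1])=\mathrm{Q}_p$ coming from Bridgeland's correspondence and the $\widetilde{\mathrm{GL}^+(2,\R)}$- and shift-invariance of stability, which is precisely why Macr{\`{\i}} records the statement as immediate.
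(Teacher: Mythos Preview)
Your argument is correct and is exactly the approach the paper takes: it records the lemma as an immediate consequence of Lemma~\ref{maclem316}, and the details you supply---invariance of stability under the $\widetilde{\mathrm{GL}^+(2,\R)}$-action and under shifts, together with the identification $\P((0,1])=\mathrm{Q}_p$ from Theorem~\ref{briprop53}---are precisely the routine verifications being suppressed.
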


In general, $\Theta_\E$ is not the subspace consisting of
stability conditions in which $E_i$'s are stable.

\begin{lem}
 \label{mac319} \cite[Lemma 3.19.]{mac}
 The subspace $\Theta_\E\subset\stab(\T)$ is an open, connected and
 simply connected $n$-dimensional submanifold.
\end{lem}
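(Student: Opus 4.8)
The plan is to cover $\Theta_\E$ by explicit coordinate charts indexed by the admissible shift vectors, to identify each chart through the central charge, and then to read off the topology of $\Theta_\E$ from the combinatorics of how the charts are glued. For $p=(p_1,\dots,p_n)\in\Z^n$ with $(E_1[p_1],\dots,E_n[p_n])$ Ext-exceptional, let $U_p\subset\stab(\T)$ be the set of stability conditions $\sigma=(Z,\P)$ with $\P((0,1])=\mathrm{Q}_p=\langle E_1[p_1],\dots,E_n[p_n]\rangle$. By Lemma~\ref{maclemma} and Lemma~\ref{mac314} these are exactly the stability conditions produced by the construction of Lemma~\ref{maclemma} for this choice of $p$, and by Lemma~\ref{maclem316} the objects $E_i[p_i]$ are stable in every $\sigma\in U_p$ and are precisely the simple objects of the finite length category $\mathrm{Q}_p$. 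By Theorem~\ref{briprop53} a point of $U_p$ is determined by the pair consisting of $\mathrm{Q}_p$ and a stability function $Z$ on it with the Harder--Narasimhan property; since $\mathrm{Q}_p$ has finitely many simples, such a $Z$ is nothing but an arbitrary choice of $Z(E_i[p_i])\in H=\{\,re^{i\pi\phi}\mid r>0,\ 0<\phi\le1\,\}$ for each $i$. Hence $\sigma\mapsto\bigl(Z(E_1[p_1]),\dots,Z(E_n[p_n])\bigr)$ is a continuous bijection $U_p\to H^n$, whose inverse is continuous because $\z$ is a local homeomorphism and $Z\mapsto(Z(E_i[p_i]))_i$ is a linear isomorphism $\hom_\Z(K(\T),\C)\cong\C^n$. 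So $U_p\cong H^n$, a contractible set, and through $\z$ it carries the structure of an $n$-dimensional complex manifold (with corners along the strata where some $Z(E_i[p_i])\in\R_{<0}$).

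Next I would show $\Theta_\E=\bigcup_p U_p$ is open in $\stab(\T)$; note this union is already invariant under the $\widetilde{\mathrm{GL}^+(2,\R)}$-action, since that action only relabels phases and rescales, carrying $\mathrm{Q}_p$ to some $\mathrm{Q}_{p'}$. The interior locus $\{\,\phi_\sigma(E_i[p_i])\in(0,1)\text{ for all }i\,\}$ of each $U_p$ is open in $\stab(\T)$ by Bridgeland's deformation theorem: a small deformation keeps the finitely many $E_i[p_i]$ stable with phases still in $(0,1)$, hence stays in $U_p$. At a boundary point, where $\phi_\sigma(E_i[p_i])=1$ for some $i$, one passes to an adjacent admissible $p'$ (obtained from $p$ by a global shift together with suitable mutations so that the same object, reindexed, has phase slightly below $1$ at $\sigma$); then $\sigma$ has a neighbourhood inside $U_p\cup U_{p'}$ because the two charts are glued along that wall by the standard tilting picture. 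Thus $\Theta_\E$ is open; being connected (next paragraph) it lies in a single component $\Sigma$, and $\z|_\Sigma:\Sigma\to V(\Sigma)\cong\C^n$ exhibits $\Theta_\E$ as an open $n$-dimensional complex submanifold.

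For connectedness and simple connectedness, observe that the distinct charts $U_p$ are pairwise disjoint (their hearts differ) but glued along common boundary strata. The adjacency graph — vertices the admissible $p$, an edge whenever $\overline{U_p}$ and $\overline{U_{p'}}$ share a codimension-one wall lying in $\Theta_\E$ — is connected, since any two admissible shift vectors are related by a finite sequence of the elementary moves ``global shift'' and ``single mutation'', each producing an adjacent pair; this yields connectedness of $\Theta_\E$. For $\pi_1$, every cell of this stratification is contractible, so $\Theta_\E$ is homotopy equivalent to the dual complex of the stratification; a loop in it is a sequence of elementary moves returning to its starting chart, and every such loop is generated by the braid and commutation relations among mutations and shifts (cf.\ the relations $\mathcal{L}_i\mathcal{R}_i=\mathrm{id}$ and $\mathcal{R}_i\mathcal{R}_{i+1}\mathcal{R}_i=\mathcal{R}_{i+1}\mathcal{R}_i\mathcal{R}_{i+1}$), each of which bounds a $2$-cell of the dual complex (a common overlap of the charts involved). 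Hence $\pi_1(\Theta_\E)=1$.

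I expect the crux to be the gluing analysis: one must verify that when a phase crosses $1$ the tilted heart is again of the form $\mathrm{Q}_{p'}$ for an admissible $p'$ — that is, the new simple objects form a shift of an exceptional collection mutation-equivalent to $\E$, correctly ordered — and that the resulting cell complex has exactly the braid-relation combinatorics and no extra cycles, so that $\Theta_\E$ is genuinely a single $n$-cell rather than a nontrivial quotient of one. Everything else (the chart identifications, openness of the interior strata, the manifold structure from $\z$) is routine once this local picture is established.
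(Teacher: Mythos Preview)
Your approach differs substantially from the paper's, and it contains a genuine gap.

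The paper (following Macr{\`{\i}}) does not decompose $\Theta_\E$ into charts at all. Because each $E_i$ is stable in every $\sigma\in\Theta_\E$, the map
\[
 \rho(\sigma)=\bigl(|Z(E_1)|,\dots,|Z(E_n)|,\phi_\sigma(E_1),\dots,\phi_\sigma(E_n)\bigr)
\]
is globally defined on $\Theta_\E$, and one shows it is a homeomorphism onto the set
$\cc_\E\subset\R^{2n}$ cut out by $m_i>0$ together with the linear inequalities
$\phi_{\ell_1}<\phi_{\ell_s}+\alpha_1^{\mathcal{F}_s}$. This set is an intersection of open
half-spaces, hence convex and contractible; openness, connectedness, simple connectedness and
the manifold structure follow immediately. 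No gluing, no wall-crossing combinatorics.

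The gap in your argument is the equality $\Theta_\E=\bigcup_p U_p$, which is false. Take the
two-term case with $\hom^k(E_1,E_2)\neq 0$ only for $k=0$ (exactly the $P_n$ situation with
$E_1=S_0$, $E_2=S_1$). A stability condition with $\phi_\sigma(E_1)=0.4$ and
$\phi_\sigma(E_2)=0.5$ lies in $\Theta_\E$ (it satisfies $\phi_1<\phi_2$, the only constraint in
$\cc_\E$), but the unique shift vector with $E_i[p_i]\in\P((0,1])$ is $p=(0,0)$, and $(E_1,E_2)$
is \emph{not} Ext-exceptional because $\hom^0(E_1,E_2)\neq 0$. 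Thus $\sigma$ lies in no $U_p$;
its heart $\P((0,1])$ is the semisimple category on $E_1,E_2$, which is not any $\mathrm{Q}_p$.
Your claim that the $\widetilde{\mathrm{GL}^+(2,\R)}$-action carries $\mathrm{Q}_p$ to some
$\mathrm{Q}_{p'}$ is what fails: a non-integer phase rotation sends the standard heart
$\P((0,1])$ to $\P((\theta,\theta+1])$, which for generic $\theta$ is not among the
$\mathrm{Q}_p$. So your charts cover only a proper subset of $\Theta_\E$, and the subsequent
openness, connectedness and $\pi_1$ arguments, all built on that cover, do not apply to
$\Theta_\E$ itself.

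Even granting the cover, your simple-connectedness step is only a heuristic: the assertion that
every closed walk in the adjacency graph of the $U_p$ is a consequence of braid and commutation
relations, and that each such relation bounds a $2$-cell present in the stratification, is
exactly the nontrivial content and is not established. Macr{\`{\i}}'s single convex chart avoids
this entirely.
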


It is worth noting how Macr{\`{\i}} proved Lemma~\ref{mac319}.

For an exceptional collection $\mathcal{F}_s=(F_1,\dots,F_s)$ ($s>1$),
we define, for $i<j$,
\[
 k_{i,j}^{\mathcal{F}_s}:=
 \begin{cases}
  +\infty, & \text{if } \hom^k(F_i,F_j)=0 \text{ for all k}, \\
  \min\left\{k\,\left|\, \hom^k(F_i,F_j) \neq 0 \right.\right\},
  & \text{otherwise.}
 \end{cases}
\]
Then define $\alpha_s^{\mathcal{F}_s}=0$, and for $i<s$,
\[
 \alpha_i^{\mathcal{F}_s} :=
 \min_{j>i}\left\{ k_{i,j}^{\mathcal{F}_s}+\alpha_j^{\mathcal{F}_s}
                   \right\}-(s-i-1)
\]
inductively.
Consider $\mathbb{R}^{n}$ with coordinates $\phi_1,\dots,\phi_n$.
Let
$\mathcal{F}_s := (E_{\ell_1},\dots,E_{\ell_s})%
\subset (E_1,\dots,E_n)$, $s>1$.
Define $R^{\mathcal{F}_s}$ on $\mathbb{R}^n$ as the relation
$\phi_{\ell_1}<\phi_{\ell_s}+\alpha_1^{\mathcal{F}_s}$.
Finally define
\[
 \cc_\E := \left\{
 (m_1,\dots,m_n,\phi_1,\dots,\phi_n) \in \mathbb{R}^{2n} \,\left|\,
 \begin{array}{l}
  m_i > 0 \text{ for all } i \text{ and }\\
  R^{\mathcal{F}_s} \text{ for all } \mathcal{F}_s \subset \E
   \text{, } s>1
 \end{array}
 \right.\right\}.
\]
Then $\cc_\E$ is homeomorphic to $\Theta_\E$ via the map
$\rho:\Theta_\E \rightarrow \mathcal{C}_\E$ defined by
$m_i(\rho(\sigma)):=|Z(E_i)|$ and
$\phi_i(\rho(\sigma)):=\phi_\sigma(E_i)$.
See \cite{mac} for the proof that $\rho$ is a homeomorphism and
that $\mathcal{C}_\E$ is an open, connected and simply connected
$n$-dimensional manifold.

\section{Quivers with two vertices}

A \emph{quiver} $Q$ consists of the following data;
a set $Q_0$ of vertices, a set $Q_1$ of arrows,
a map $s:Q_1\rightarrow Q_0$ called the source map, and
a map $t:Q_1\rightarrow Q_0$ called the target map.
We write $\alpha:a\rightarrow b$ if $s(\alpha)=a$ and $t(\alpha)=b$ for
$\alpha\in Q_1$.
A \emph{finite dimensional representation} $V$ on $Q$ is a family
$V=\{V_a,f_\alpha\}$ such that
$V_a$ is a finite dimensional $\C$-vector space for each $a\in Q_0$ and
$f_\alpha:V_a\rightarrow V_b$ is a linear map for each arrow
$\alpha:a\rightarrow b$.
A \emph{morphism} $F:V\rightarrow V'$ between representations is a
family of linear maps $F=\{F_a:V_a\rightarrow V'_a\}$ such that
$f'_\alpha\circ F_a=F_b\circ f_\alpha$ for each $\alpha:a\rightarrow b$.
We denote by $\rep(Q)$ the abelian category consisting of all finite
dimensional representations on $Q$ and all morphisms; and denote by
$\D(Q)=\D^b(\rep(Q))$ the bounded derived category of $\rep(Q)$.

In this article, we consider the quiver with finite vertices, finite
arrows, no loops, no oriented cycles, and no relations on arrows.
For such quiver, following results are well-known:
\begin{itemize}
 \item Let $E_a\in\rep(Q)$ be a representation assigning $\C$ to the
       vertex $a$ and $0$ to other vertices.
       Then $\{E_a\}_{a\in Q_0}$ is a complete set of simple objects in
       $\rep(Q)$.
 \item $\hom(E_a,E_b)=\C$ if $a=b$ and $=0$ if $a\neq b$.
 \item $\ext^1(E_a,E_b)=\C^n$ if there exist $n$ arrows from $a$ to $b$.
 \item Denote by $E_a\in\D(Q)$ the complex assigning $E_a$ to degree $0$
       and $0$ to other degree.
       Then $E_a$ is an exceptional object for each $a\in Q_0$.
 \item $K(\D(Q))\cong K(\rep(Q))$ is the free abelian group generated by
       $\{E_a\}_{a\in Q_0}$.
 \item $\rep(Q)$ is a hereditary abelian category.
       Hence each $X\in\D(Q)$ is isomorphic to its cohomology regarded
       as a complex with zero differential. (e.g. \cite{kra}.)
\end{itemize}

By ordering in a suitable manner, the sequence $(E_a)_{a_\in Q_0}$ becomes a
complete exceptional collection on $\D(Q)$.
Hence we apply results in Subsection~\ref{subsection22} to $\D(Q)$.
In fact, Macr{\`{\i}} studied $\stab(P_n)=\stab(\D(P_n))$ where $P_n$ is
the quiver as follows:
\[
 \xymatrix{
 P_n\,:\, {\stackrel{0}{\bullet}} \ar @<8pt> [r] \ar @<3pt> [r]
 \ar @<-8pt> [r] ^-{\scalebox{.7}{\vdots}}
 & {\stackrel{1}{\bullet}}
 }
\]

Let $S_0$ and $S_1$ be exceptional objects in $\D(P_n)$ such that
$S_0[1]$ and $S_1$ are simple objects of $\rep(P_n)$.
According to \cite{mac, boe, rin}, if we define
\[
 S_k :=
 \begin{cases}
  \mathcal{L}_{S_{k+1}}S_{k+2} & (k<0) \\
  \mathcal{R}_{S_{k-1}}S_{k-2} & (k\geq 2)
 \end{cases}
\]
inductively, then $\{S_k\}$ is the complete set of exceptional objects
in $\D(P_n)$, up to shifts and isomorphisms.
Each adjacent pair $(S_k,S_{k+1})$ is an exceptional pair.
Moreover each $(S_k,S_{k+1})$ is the right mutation of $(S_{k-1},S_k)$.
Note that, if $n=1$, there are only three exceptional objects up to
shifts and isomorphisms, i.e. $S_0$, $S_1$, and $S_2$, up to shifts and
isomorphisms.
Further, the right mutation satisfies:
$\mathcal{R}_1\mathcal{R}_1\mathcal{R}_1(S_0,S_1)=(S_0[1],S_1[1])$.

\begin{lem}
 \cite[Lemma 4.1]{mac}
 \label{mac41}
 Assume that $n>1$, if $i<j$ then in $\D(P_n)$
 \begin{itemize}
  \item $\hom^k(S_i,S_j)\neq 0$ only if $k=0$;
  \item $\hom^k(S_j,S_i)\neq 0$ only if $k=1$.
 \end{itemize}
 In particular the pair $(S_k,S_{k+1})$ is a complete exceptional pair.
\end{lem}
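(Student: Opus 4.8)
The plan is to proceed by induction on the distance $|i-j|$, using the recursive definition of the exceptional objects $S_k$ via right mutations together with the known cohomology of the initial pair. First I would establish the base case: for the pair $(S_0,S_1)$, by assumption $S_0[1]$ and $S_1$ are the two simple objects of $\rep(P_n)$, so $\hom^\bullet(S_0,S_1)=\ext^1(S_1[1]',\ldots)$—more directly, $\hom^k(S_0,S_1)=\hom^k(S_1[1][-1],S_1)$ computed from the quiver data: there are $n$ arrows from vertex $0$ to vertex $1$, so $\hom^k(S_0,S_1)=\C^n$ concentrated in $k=0$, while $\hom^k(S_1,S_0)=\C^n$ concentrated in $k=1$ (the Ext${}^1$ between the simples of a hereditary algebra, shifted appropriately). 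This gives the claim for $|i-j|=1$ on the pair $(S_0,S_1)$; by the mutation relations and the fact that each $(S_k,S_{k+1})$ is obtained from $(S_{k-1},S_k)$ by a right mutation, the same vanishing pattern propagates to every adjacent pair, using that mutation is an autoequivalence-like operation preserving the structure of an exceptional pair and that the defining triangle $S_k\to \hom^\bullet(S_{k-1},S_k)^*\otimes S_k \to \ldots$ controls the degrees. Here the hereditariness of $\rep(P_n)$—every object of $\D(P_n)$ splits as a sum of shifts of its cohomology—keeps all the $\hom^\bullet$ groups concentrated in at most one or two adjacent degrees.

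For the inductive step, suppose the statement holds for all pairs at distance $<d$, and consider $S_i,S_j$ with $j-i=d\geq 2$. I would write $S_j=\mathcal{R}_{S_{j-1}}S_{j-2}$ (for the relevant range) and apply $\hom^\bullet(S_i,-)$ to the defining distinguished triangle
\[
 S_{j-2}\longrightarrow \hom^\bullet(S_{j-1},S_{j-2})^*\otimes S_{j-1}\longrightarrow S_j,
\]
obtaining a long exact sequence relating $\hom^k(S_i,S_j)$ to $\hom^k(S_i,S_{j-2})$ and $\hom^{k}(S_i,S_{j-1})\otimes\hom^\bullet(S_{j-1},S_{j-2})^*$. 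By the inductive hypothesis the first two are concentrated in degree $0$, and by the base case $\hom^\bullet(S_{j-1},S_{j-2})$ is concentrated in degree $1$, so its dual sits in degree $-1$; a degree bookkeeping then forces $\hom^k(S_i,S_j)$ to vanish outside $k=0$. The symmetric statement $\hom^k(S_j,S_i)\neq 0$ only if $k=1$ follows either by the same argument applied to $\hom^\bullet(-,S_i)$ of the triangle, or more cleanly by Serre-duality-type considerations for the hereditary category $\rep(P_n)$: for exceptional objects in a hereditary category, $\hom^\bullet(A,B)$ and $\hom^\bullet(B,A)$ cannot both be nonzero, and when $(A,B)$ is an exceptional pair with $A$ to the left, the backward Hom is forced into degree $1$.

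The main obstacle I expect is the careful sign/degree bookkeeping in the long exact sequences: one has to verify that no cancellation or unexpected nonvanishing creeps in at the ``boundary'' degrees, and in particular that the tensor factors $\hom^\bullet(S_{j-1},S_{j-2})^*$ (which are nonzero and can have large dimension $n$ or more) do not spread cohomology into degree $1$ of $\hom^\bullet(S_i,S_j)$. Resolving this requires tracking not just \emph{whether} the groups vanish but exploiting that the relevant connecting maps are isomorphisms—equivalently, that the mutation is chosen so that the triangle is ``as degenerate as possible,'' which is precisely the content of the Ext-exceptional normalization. Once that is pinned down, the final sentence—that $(S_k,S_{k+1})$ is a complete exceptional pair—is immediate: completeness is inherited from $(S_0,S_1)$ since mutations of a complete exceptional collection remain complete, and the vanishing $\hom^k(S_{k+1},S_k)\neq 0$ only for $k=1$ together with $\hom^k(S_k,S_{k+1})\neq 0$ only for $k=0$ is exactly the exceptional-pair condition.
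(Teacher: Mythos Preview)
The paper does not supply its own proof of this lemma; it is stated with a citation to \cite[Lemma~4.1]{mac} and nothing more. So there is no argument in the paper to compare yours against, and your outline stands or falls on its own.

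As an outline your strategy is sound, and indeed the adjacent case $\hom^\bullet(S_{k-1},S_k)\cong\C^n$ concentrated in degree $0$ is exactly what the paper proves later (Lemma~\ref{eseqofpn}) by the same induction via the mutation triangle. Two concrete remarks, though. First, a slip: the defining triangle of the right mutation $S_j=\mathcal{R}_{S_{j-1}}S_{j-2}$ is
\[
  S_{j-2}\longrightarrow \hom^\bullet(S_{j-2},S_{j-1})^*\otimes S_{j-1}\longrightarrow S_j,
\]
not with $\hom^\bullet(S_{j-1},S_{j-2})^*$ as you wrote; since this middle factor is $\C^n$ in degree $0$ (not degree $-1$), your degree bookkeeping there is off.

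Second, and more substantively: the long exact sequence argument works cleanly to kill $\hom^k(S_i,S_j)$ for $k\geq 1$, but for $k=-1$ it only gives
\[
  0\to\hom^{-1}(S_i,S_j)\to\hom^0(S_i,S_{j-2})\to\hom^0(S_i,S_{j-1})^{\oplus n},
\]
and you need the right-hand map to be injective. Your proposed resolution (``the connecting maps are isomorphisms because the triangle is as degenerate as possible'') is not an argument. The cleanest fix is to use hereditariness more sharply than you do: show inductively that $S_k$ is an actual object of $\rep(P_n)$ for $k\geq 1$, while $S_k[1]\in\rep(P_n)$ for $k\leq 0$ (this follows from the same mutation triangles once you check the base case). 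Then negative-degree $\hom$-vanishing is automatic in the purely preprojective or purely preinjective case, and for the mixed case $i\leq 0<j$ the only thing to check is $\hom_{\rep(P_n)}(S_i[1],S_j)=0$, i.e.\ that there are no maps from preinjective to preprojective modules---a standard fact for hereditary algebras that you should cite or prove directly. With that in hand, the induction closes.
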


\begin{lem}
 \cite[Lemma 4.2]{mac}
 \label{mac42}
 In every stability condition on $\D(P_n)$ there exists a stable
 exceptional pair $(E,F)$.
\end{lem}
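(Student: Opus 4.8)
The plan is to reduce the statement to: \emph{in every $\sigma=(Z,\P)\in\stab(P_n)$ some consecutive pair $S_k$, $S_{k+1}$ consists of two stable objects}. This suffices because, $\{S_k\}$ being the complete list of exceptional objects up to shift and isomorphism and stability being shift-invariant, every exceptional pair has the form $(S_k[a],S_{k+1}[b])$ — by Lemma~\ref{mac41} no pair $(S_{k+1}[a],S_k[b])$ can be exceptional, since $\hom^\bullet(S_k,S_{k+1})$ is a nonzero space concentrated in degree $0$. The case $n=1$ should be handled first and directly: $\D(P_1)$ has finite representation type, so every stable object is indecomposable, hence exceptional, hence one of $S_0,S_1,S_2$ up to shift, and a short case analysis of the possible orderings of the phases of the two simple objects $S_0[1]$, $S_1$ of $\rep(P_1)$ (using $\mathcal{R}_1^3(S_0,S_1)=(S_0[1],S_1[1])$) exhibits two consecutive stable ones. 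For $n>1$ the backbone is the behaviour of the central charges of the $S_k$: since $\hom^\bullet(S_k,S_{k+1})$ lies in degree $0$ and $(S_k,S_{k+1})$ is a complete exceptional pair, the Euler form forces $\dim\hom^\bullet(S_k,S_{k+1})=n$ for all $k$, so the mutation triangle $S_{k-1}=\mathcal{L}_{S_k}S_{k+1}$ reads
\[
 S_{k-1}\longrightarrow S_k^{\oplus n}\longrightarrow S_{k+1}\longrightarrow S_{k-1}[1],
\]
whence $Z(S_{k+1})=n\,Z(S_k)-Z(S_{k-1})$. The characteristic roots $\frac{n\pm\sqrt{n^2-4}}{2}$ are real and positive, so $\arg Z(S_k)$ is eventually monotone and converges to the two eigendirections as $k\to\pm\infty$; in particular only finitely many $S_k$ have central charge with argument in any fixed half-open interval of length $\pi$.

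\textbf{Step 1: one stable exceptional object.} Every $\sigma$ has stable objects: for any nonzero $X$ the last Harder--Narasimhan factor is semistable of phase $\phi_\sigma^-(X)$, and local finiteness refines it, via a Jordan--H\"older filtration in $\P(\phi_\sigma^-(X))$, to a stable object. Such a stable object $T$ is indecomposable, hence — $\rep(P_n)$ being hereditary — a shift of an indecomposable representation; if that representation is exceptional, $T$ is (a shift of) some $S_k$ and we are done. Otherwise, using that $T$ together with the generating pair $(S_k,S_{k+1})$ cannot be completely $\hom$-orthogonal, one obtains a nonzero morphism between $T$ and some $S_j$; the triangles above then propagate semistability constraints along the chain $\{S_k\}$, and the finiteness noted above forces some $S_k$ to be semistable. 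A Jordan--H\"older factor of that $S_k$ is stable, and tracking phases through the recursion pins down a genuinely stable exceptional object.

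\textbf{Step 2: promotion to a pair.} Suppose $S_k$ is stable of phase $\phi$. In the triangle $S_{k-1}\to S_k^{\oplus n}\to S_{k+1}$ the middle map is the canonical evaluation map and is nonzero (its cone is $S_{k-1}[1]\neq 0$, and $S_{k+1}$ is indecomposable, not a summand of $S_k^{\oplus n}$); likewise the other two maps are nonzero. Since $S_k^{\oplus n}$ is semistable of phase $\phi$, axiom Br1 gives $\phi_\sigma^+(S_{k+1})\geq\phi$, $\phi_\sigma^-(S_{k-1})\leq\phi$, and rotating the triangle yields $\phi_\sigma^-(S_{k+1})\leq\phi_\sigma^+(S_{k-1})+1$ together with the symmetric bounds. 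Running these inequalities for all $k$ against the monotone, convergent sequence $\arg Z(S_k)$ — which lies between $\phi_\sigma^-(S_k)$ and $\phi_\sigma^+(S_k)$ — forces the intervals $[\phi_\sigma^-(S_k),\phi_\sigma^+(S_k)]$ to collapse for some $k$, so both $S_k$ and $S_{k+1}$ are semistable of a single phase; the strictly-semistable case is then excluded via the same triangle and the indecomposability (exceptionality) of $S_k$ and $S_{k+1}$, giving stability of the pair. Iterating Steps 1 and 2 lands on a genuine pair $(S_k,S_{k+1})$ of stable objects.

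\textbf{The main obstacle} is exactly this phase bookkeeping: keeping track of how the Harder--Narasimhan data of the various $S_j$ propagate through the mutation triangles, and showing that ``instability'' cannot persist along the entire $\Z$-indexed chain $\{S_k\}$. The device that closes the argument is the reality of the characteristic roots for $n\geq 2$, which makes $\arg Z(S_k)$ eventually monotone and convergent and hence confines all but finitely many $S_k$ to any prescribed $\pi$-window. The value $n=2$ (double root $1$, $a_k=k$) needs a small separate argument, since there the two eigendirections coincide.
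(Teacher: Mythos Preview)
The paper does not prove this lemma; it is quoted verbatim from Macr{\`\i}'s paper \cite[Lemma~4.2]{mac} and used as a black box, so there is no in-paper argument to compare against.

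Regarding your proposal on its own terms: what you have written is a strategy, not a proof, and the two places where the real work has to happen are precisely the places you leave vague. In Step~1, once the stable object $T$ is a non-exceptional indecomposable (which for $n\ge 2$ is the generic situation---already for the Kronecker quiver the regular tubes supply a $\mathbb{P}^1$-family of such $T$), the sentence ``the triangles above then propagate semistability constraints along the chain $\{S_k\}$, and the finiteness noted above forces some $S_k$ to be semistable'' contains no mechanism: you have not explained which inequalities you obtain from a nonzero map $T\to S_j$ or $S_j\to T$, nor why those inequalities, combined with the recursion, actually pin down a semistable $S_k$. In Step~2 the same thing happens: knowing that $\arg Z(S_k)$ is eventually monotone and convergent constrains the \emph{central charges}, but it says nothing directly about the widths $\phi_\sigma^+(S_k)-\phi_\sigma^-(S_k)$; your assertion that the intervals ``collapse'' is exactly the statement to be proved, and the inequalities you list (coming from the nonzero maps in the mutation triangle) are not sharp enough by themselves to force this. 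You flag this yourself as ``the main obstacle'', and the device you name---reality of the characteristic roots---controls $Z(S_k)$, not the Harder--Narasimhan spread.

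A smaller point: your reduction at the start asserts that no pair $(S_{k+1}[a],S_k[b])$ is exceptional because $\hom^\bullet(S_k,S_{k+1})\neq 0$; but the exceptionality condition requires $\hom^\bullet(S_k[b],S_{k+1}[a])=0$, which is the same space up to shift, so this is fine---just be careful with the direction of the vanishing you invoke.
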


Let $\Theta_k$, $k\in\Z$, be a subset of $\stab(P_n)$ consisting of
all stability conditions made from $(S_k,S_{k+1})$ by Lemma
\ref{maclemma}.

\begin{lem}
 \label{mac4a}
 $\Theta_k$ coincides with the subset of $\stab(P_n)$ consisting of
 all stability conditions in which $S_k$ and $S_{k+1}$ are stable.
\end{lem}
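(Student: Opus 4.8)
The plan is to prove the two inclusions separately. For $\Theta_k\subseteq\{\sigma:S_k,S_{k+1}\ \text{are stable}\}$ there is nothing to do beyond invoking the lemma (stated just before Lemma~\ref{mac319}) that the members of a complete exceptional collection $\E$ are stable in every stability condition of $\Theta_\E$: since $(S_k,S_{k+1})$ is a complete exceptional collection by Lemma~\ref{mac41}, in every stability condition obtained from it by Lemma~\ref{maclemma}, and hence in every element of $\Theta_k$, the objects $S_k$ and $S_{k+1}$ are stable.

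The content is in the reverse inclusion. Given $\sigma=(Z,\P)$ in which $S_k$ and $S_{k+1}$ are stable, I would first pin down the relative position of their phases. Since $(S_k,S_{k+1})$ is an exceptional pair, $\hom^\bullet(S_{k+1},S_k)=0$, whereas $\hom^\bullet(S_k,S_{k+1})$ is concentrated in degree $0$ and is nonzero there (Lemma~\ref{mac41}; the case $n=1$ being handled directly). Feeding $\hom^0(S_k,S_{k+1})\neq0$ into \textbf{Br1} gives $\phi_\sigma(S_k)\leq\phi_\sigma(S_{k+1})$, and equality cannot hold: two stable objects of a common phase $\phi$ are simple objects of the abelian category $\P(\phi)$, and $S_k\not\cong S_{k+1}$ (otherwise $0=\hom^\bullet(S_{k+1},S_k)\cong\hom^\bullet(S_k,S_k)=\C$), so Schur's lemma would force $\hom^0(S_k,S_{k+1})=0$. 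Hence $\phi_\sigma(S_k)<\phi_\sigma(S_{k+1})$.

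Next I would normalize. Because $\Theta_k$ is invariant under the $\widetilde{\mathrm{GL}^+(2,\R)}$-action, in particular under the $\C$-action of Subsection~\ref{subsectionact}, which shifts every phase by a common real number $b/\pi$, it is enough to treat a representative of the $\widetilde{\mathrm{GL}^+(2,\R)}$-orbit of $\sigma$; so I may assume $\phi_\sigma(S_k)=1$. Then $\phi_\sigma(S_{k+1})>1$, so the unique integer $q$ with $\phi_\sigma(S_{k+1})+q\in(0,1]$ satisfies $q\leq-1$. Now $S_k=S_k[0]$ and $S_{k+1}[q]$ both lie in $\P((0,1])$, and since $0>q$ a short degree count (using $\hom^\bullet(S_{k+1},S_k)=0$ together with the degree-$0$ concentration of $\hom^\bullet(S_k,S_{k+1})$) shows the pair $(S_k,S_{k+1}[q])$ is Ext-exceptional; it is complete since $(S_k,S_{k+1})$ is. Lemma~\ref{maclem316} then identifies $\P((0,1])$ with $\langle S_k,S_{k+1}[q]\rangle$, that is, with the heart $\mathrm{Q}_p$ built from $(S_k,S_{k+1})$ using the shifts $(p_1,p_2)=(0,q)$; and $Z(S_k),Z(S_{k+1}[q])\in H$ because the corresponding phases lie in $(0,1]$. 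Therefore $\sigma$ is literally one of the stability conditions produced by Lemma~\ref{maclemma} from the complete exceptional collection $(S_k,S_{k+1})$, i.e. $\sigma\in\Theta_k$.

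I expect the crux to be the strict phase inequality $\phi_\sigma(S_k)<\phi_\sigma(S_{k+1})$ of the first step. It is exactly what makes the normalization work: it lets one rotate so that $S_k$ lands in $\P((0,1])$ while $S_{k+1}$ sits strictly above phase $1$, so that after the appropriate negative shift $q$ the pair $(S_k,S_{k+1}[q])$ becomes Ext-exceptional and Lemma~\ref{maclem316} applies. Without it — if the two objects could share a phase, or have phases too close to be separated by an integer — the natural shifts would coincide and Ext-exceptionality would fail.
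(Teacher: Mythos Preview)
Your argument is correct. The paper itself does not give a proof of this lemma; it simply refers the reader to \cite[Section~4]{mac}. Your write-up supplies precisely the details one would reconstruct from Macr{\`\i}'s framework: the forward inclusion is immediate from the lemma preceding Lemma~\ref{mac319}, and for the reverse inclusion your key steps---deducing the strict phase inequality $\phi_\sigma(S_k)<\phi_\sigma(S_{k+1})$ from $\hom^0(S_k,S_{k+1})\neq 0$ together with Schur's lemma in $\P(\phi)$, normalizing via the $\C$-action, choosing the shift $q\leq -1$ so that $(S_k,S_{k+1}[q])$ becomes Ext-exceptional (using crucially that $\hom^\bullet(S_{k+1},S_k)=0$), and then invoking Lemma~\ref{maclem316}---are all sound and constitute exactly the intended argument.

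One small presentational remark: when you check Ext-exceptionality, the vanishing $\hom^{\leq 0}(S_{k+1}[q],S_k)=0$ follows already from $\hom^\bullet(S_{k+1},S_k)=0$ (exceptional pair), so the ``degree count'' is only needed for $\hom^{\leq 0}(S_k,S_{k+1}[q])$, where the concentration in degree $-q\geq 1$ does the job. You have this right, but it could be stated more directly.
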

\begin{proof}
 See \cite[Section~4]{mac}.
\end{proof}

\begin{prop}
 \label{macprop}
 If $n>1$, $\stab(P_n)=\bigcup_{k\in\Z}\Theta_k$.
 In addition, $\stab(P_1)=\Theta_0 \cup \Theta_1 \cup \Theta_2$.
\end{prop}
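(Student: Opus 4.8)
The plan is to establish the two non-trivial inclusions $\stab(P_n)\subseteq\bigcup_{k\in\Z}\Theta_k$ when $n>1$ and $\stab(P_1)\subseteq\Theta_0\cup\Theta_1\cup\Theta_2$; the reverse inclusions are immediate, since each $\Theta_k$ is by construction a subset of $\stab(P_n)$. The two engines will be Lemma~\ref{mac42}, which produces a stable exceptional pair inside an arbitrary stability condition, and Lemma~\ref{mac4a}, which identifies $\Theta_k$ with the locus on which $S_k$ and $S_{k+1}$ are both stable.

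First I would fix $\sigma=(Z,\P)\in\stab(P_n)$ and use Lemma~\ref{mac42} to choose an exceptional pair $(E,F)$ both of whose members are stable in $\sigma$. Since $\rep(P_n)$ is hereditary and $\{S_k\}$ is the complete list of exceptional objects of $\D(P_n)$ up to shift and isomorphism, I may write $E\cong S_i[a]$ and $F\cong S_j[b]$ for suitable integers $i,j,a,b$. The shift functor restricts to an equivalence of abelian categories from $\P(\phi)$ to $\P(\phi+1)$ by axiom \textbf{Br2}, so it preserves simplicity; hence $S_i=E[-a]$ and $S_j=F[-b]$ are themselves stable in $\sigma$. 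The crucial remaining point is to show that $\{i,j\}$ consists of consecutive integers, for then $S_k$ and $S_{k+1}$ are stable in $\sigma$ with $k=\min\{i,j\}$, and Lemma~\ref{mac4a} yields $\sigma\in\Theta_k$.

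For the consecutiveness when $n>1$: being an exceptional pair forces $\hom^\bullet_{\D(P_n)}(F,E)=0$, i.e.\ $\hom^m(S_j,S_i)=0$ for all $m$. By Lemma~\ref{mac41}, $\hom^m(S_j,S_i)$ can be nonzero only in degree $0$ when $j<i$, and only in degree $1$ when $j>i$. I would then combine this with the non-vanishing facts $\hom^0(S_\alpha,S_\beta)\ne 0$ for all $\alpha<\beta$ and $\hom^1(S_\beta,S_\alpha)\ne 0$ whenever $\beta\ge\alpha+2$ --- both of which follow from the recursion $[S_{k+1}]=n[S_k]-[S_{k-1}]$ for the Grothendieck classes together with the Euler form, equivalently from the mutation structure as developed in \cite[Section~4]{mac} --- to conclude first that $j>i$ is forced and then that $j=i+1$. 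When $n=1$ the same reduction applies but is combinatorially trivial: there are only three exceptional objects up to shift, $S_0,S_1,S_2$, and $\mathcal{R}_1\mathcal{R}_1\mathcal{R}_1(S_0,S_1)=(S_0[1],S_1[1])$, so a direct check shows the only exceptional pairs among their shifts are, up to independent shifts, $(S_0,S_1)$, $(S_1,S_2)$ and $(S_2,S_3)=(S_2,S_0[1])$; hence $(E,F)$ is a shift of one of these and $\sigma\in\Theta_0\cup\Theta_1\cup\Theta_2$.

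I expect the genuine obstacle to be this homological step --- ruling out that two non-consecutive $S_i,S_j$ can be the members of an exceptional pair. This is precisely where the dichotomy in the statement originates: the family $\{S_k\}$ is finite (of period three modulo shift) exactly when $n=1$ and infinite for $n>1$, and for $n>1$ the argument rests on the fact that $\dim\hom^\bullet(S_i,S_j)$ never vanishes accidentally in the relevant cohomological degree, which one reads off from the mutation recursion.
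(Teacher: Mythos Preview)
Your proposal is correct and follows exactly the route the paper takes: the paper's proof is the single line ``It follows immediately from Lemma~\ref{mac41}, \ref{mac42} and~\ref{mac4a},'' and you invoke precisely these three ingredients in the same way. The only difference is that you spell out the consecutiveness step (that the stable exceptional pair $(E,F)$ must, up to shift, be $(S_k,S_{k+1})$ for some $k$), whereas the paper treats this as already known from the classification of exceptional pairs cited just before Lemma~\ref{mac41} via \cite{mac,boe,rin}; your direct Hom-computation sketch is a valid alternative to quoting that transitivity result.
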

\begin{proof}
 It follows immediately from Lemma \ref{mac41}, \ref{mac42} and
 \ref{mac4a}.
\end{proof}

\begin{prop}
 \cite[Proposition 4.4]{mac}
 For all integers $k\neq h$ we have
 \[
  \Theta_k \cap \Theta_h = O_{-1}
 \]
 where $O_{-1}$ is the $\widetilde{\mathrm{GL}^+(2,\R)}$-orbit of the
 stability condition $\sigma_{-1}=(Z_{-1},\P_{-1})$ given by
 $Z_{-1}(S_0[1])=-1$ and $Z_{-1}(S_1)=1+i$.
\end{prop}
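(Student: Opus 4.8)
The plan is to prove the two inclusions $O_{-1}\subseteq\Theta_k\cap\Theta_h$ and $\Theta_k\cap\Theta_h\subseteq O_{-1}$. Throughout I use that, by Lemma~\ref{mac4a}, $\Theta_j$ is exactly the locus where $S_j$ and $S_{j+1}$ are both stable, that $\Theta_j$ is open (Lemma~\ref{mac319}), and that $\Theta_j$ and $O_{-1}$ are preserved by the $\widetilde{\mathrm{GL}^+(2,\R)}$-action; so to prove either inclusion it is enough to test $\sigma_{-1}$, and, since $Z_{-1}(S_0[1])=-1$ and $Z_{-1}(S_1)=1+i$ are $\R$-linearly independent, $\sigma_{-1}$ has trivial stabilizer, whence $O_{-1}$ is a free orbit and an open submanifold of $\stab(P_n)$. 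Finally, the mutation triangles together with $\dim_\C\hom^\bullet(S_j,S_{j+1})=n$ (Lemma~\ref{mac41}, and the remark for $n=1$) give $[S_{j+1}]=n[S_j]-[S_{j-1}]$ in $K(\D(P_n))$, hence $[S_j]=a_j[S_1]+a_{j-1}[S_0[1]]$ and, for every $\sigma$, $Z_\sigma(S_j)=a_jZ_\sigma(S_1)+a_{j-1}Z_\sigma(S_0[1])$; in particular $Z_{-1}(S_j)=(a_j-a_{j-1})+a_ji$.

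For $O_{-1}\subseteq\Theta_k\cap\Theta_h$ it suffices to show $\sigma_{-1}\in\Theta_j$ for every $j$, i.e.\ that each $S_j$ is $\sigma_{-1}$-stable. The heart of $\sigma_{-1}$ is $\rep(P_n)=\langle S_0[1],S_1\rangle$ (Lemma~\ref{maclemma} with the simples $S_0[1]=E_0$, $S_1=E_1$), and from the displayed formula together with the monotonicity of $a_{j-1}/a_j$ and $a_{j+1}/a_j$ recorded after Corollary~\ref{maincor} one reads off that $j\mapsto\phi_{\sigma_{-1}}(S_j)$ is strictly increasing, with all values in an open interval of length $\leq 1$. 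For $j\geq 2$ the object $S_j$ is an indecomposable preprojective module all of whose proper subobjects have composition factors among the $S_i$ with $i<j$ and all of whose proper quotients involve only $S_i$ with $i>j$ and $S_0[1]=E_0$; by the monotonicity just noted each proper subobject has strictly smaller, and each proper quotient strictly larger, phase than $S_j$, so $S_j$ is stable. The case $j\leq -1$ (where a shift $S_j[1]$ is preinjective) is symmetric, and $S_0[1]=E_0$, $S_1=E_1$ are simple. Thus $\sigma_{-1}\in\bigcap_j\Theta_j$.

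For $\Theta_k\cap\Theta_h\subseteq O_{-1}$, fix $\sigma\in\Theta_k\cap\Theta_h$ with $k\neq h$. Acting by $\widetilde{\mathrm{GL}^+(2,\R)}$ we may assume the heart of $\sigma$ is $\mathcal{A}_k:=\langle S_k[1],S_{k+1}\rangle\cong\rep(P_n)$, with $S_k[1]$ the source simple and $S_{k+1}$ the sink simple; then $S_{k+j}\in\mathcal{A}_k$ for $j\geq1$, $S_{k-i}[1]\in\mathcal{A}_k$ for $i\geq0$, and $Z_\sigma(S_{k+j})=a_{j-1}Z_\sigma(S_k[1])+a_jZ_\sigma(S_{k+1})$. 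Write $d_k(\sigma)=\phi_\sigma(S_{k+1})-\phi_\sigma(S_k)>0$, positive on $\Theta_k$ by the $\cc_\E$-description. The mutation triangle $S_k\to S_{k+1}^{\oplus n}\to S_{k+2}$ gives a short exact sequence $0\to S_{k+1}^{\oplus n}\to S_{k+2}\to S_k[1]\to0$ in $\mathcal{A}_k$ exhibiting $S_{k+2}$ as a nonsplit extension of the two stable simples, so $S_{k+2}$ is stable iff $\phi_\sigma(S_{k+1})<\phi_\sigma(S_k[1])$, i.e.\ iff $d_k(\sigma)<1$. If instead $d_k(\sigma)\geq1$, i.e.\ $\arg Z_\sigma(S_{k+1})\geq\arg Z_\sigma(S_k[1])$, then for every $j\geq2$ the representation $S_{k+j}$ has the proper nonzero subobject supported at the sink vertex, a sum of copies of $S_{k+1}$ whose phase $\phi_\sigma(S_{k+1})$ is $\geq\arg Z_\sigma(S_{k+j})/\pi$, so $S_{k+j}$ is not stable; dually each $S_{k-i}[1]$ with $i\geq1$ has a proper quotient supported at the source vertex of phase $\leq\arg Z_\sigma(S_{k-i}[1])/\pi$, so $S_{k-i}$ is not stable. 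Hence $d_k(\sigma)\geq1$ forces $\sigma\notin\Theta_j$ for all $j\neq k$, and therefore $\Theta_k\cap\Theta_h\subseteq\{\,\sigma\in\Theta_k:d_k(\sigma)<1\,\}=\Theta_k\cap\Theta_{k+1}$. This last set is open, connected (in the $\cc_\E$-coordinates it is $\{m_k,m_{k+1}>0,\ 0<\phi_{k+1}-\phi_k<1\}$) and meets no wall $d_k\in\Z$, so every point of it has trivial stabilizer and it is a single $\widetilde{\mathrm{GL}^+(2,\R)}$-orbit; since it contains $\sigma_{-1}$ (we have $d_k(\sigma_{-1})\in(0,1)$ by the monotonicity above), it equals $O_{-1}$, and combining the two inclusions gives $\Theta_k\cap\Theta_h=O_{-1}$. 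For $n=1$, where $\stab(P_1)=\Theta_0\cup\Theta_1\cup\Theta_2$ and $S_3=S_0[1]$, each of the three pairwise intersections equals $\{S_0,S_1,S_2\ \text{all stable}\}=\{\sigma\in\Theta_0:d_0(\sigma)<1\}=O_{-1}$ by the same computation.

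The step I expect to be the real obstacle is the one in the third paragraph: showing that once the slope gap $d_k(\sigma)$ reaches $1$ the stability condition drops out of \emph{every} other $\Theta_j$, so that the overlap strip $\Theta_k\cap\Theta_{k+1}$ is simultaneously the overlap with all the $\Theta_j$. This is exactly where one must invoke Macr{\`{\i}}'s explicit coordinatization of $\cc_\E$ and the combinatorics of subrepresentations of the preprojective and preinjective modules $S_j$ of $P_n$; the remainder is bookkeeping with the $\widetilde{\mathrm{GL}^+(2,\R)}$-action and the $K$-theory recursion.
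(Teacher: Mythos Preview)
The paper itself does not prove this proposition; it is quoted from Macr{\`{\i}}~\cite[Proposition~4.4]{mac} without argument, so there is no proof in the paper to compare against. That said, your overall strategy---identify $O_{-1}$ with the strip $0<d_k(\sigma)=\phi_\sigma(S_{k+1})-\phi_\sigma(S_k)<1$ inside $\Theta_k$ and show this strip is a single free $\widetilde{\mathrm{GL}^+(2,\R)}$-orbit---is exactly the right one, and matches Macr{\`{\i}}'s.

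There is, however, a genuine gap in your reverse inclusion. You write ``Acting by $\widetilde{\mathrm{GL}^+(2,\R)}$ we may assume the heart of $\sigma$ is $\mathcal{A}_k=\langle S_k[1],S_{k+1}\rangle$'' and then test stability using subobjects in $\mathcal{A}_k$. But the $\widetilde{\mathrm{GL}^+(2,\R)}$-action preserves the integer part of $d_k(\sigma)$ (the reparametrisations $f$ satisfy $f(\phi+1)=f(\phi)+1$), and having both $S_k[1]$ and $S_{k+1}$ in the heart forces $d_k(\sigma)\in(0,2)$; so for $\sigma\in\Theta_k$ with $d_k(\sigma)\geq 2$ the reduction is impossible and your destabilising-subobject argument never applies. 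The fix is short. Either observe that for $d_k(\sigma)>1$ one may act so that the heart is $\langle S_k[p],S_{k+1}\rangle$ with $p\geq 2$, and since $\hom^1(S_k[p],S_{k+1})=\hom^{1-p}(S_k,S_{k+1})=0$ and $\hom^\bullet(S_{k+1},S_k)=0$ this heart is semisimple, so the only stable objects are shifts of $S_k$ and $S_{k+1}$; or avoid hearts entirely and use that for every $j\geq 2$ the nonzero morphisms $S_{k+1}\to S_{k+j}$ and $S_{k+j}\to S_k[1]$ force $\phi_\sigma(S_{k+1})\leq\phi_\sigma(S_{k+j})\leq\phi_\sigma(S_k)+1$ whenever $S_{k+j}$ is stable, giving $d_k(\sigma)\leq 1$ directly (and dually for $S_{k-i}$).

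A smaller point: in your proof of $O_{-1}\subseteq\bigcap_j\Theta_j$, the assertion about proper \emph{quotients} of $S_j$ for $j\geq 2$ is false---e.g.\ $S_2/S_1$ is a regular module when $n\geq 2$, not one of the $S_i$---and ``composition factors'' is not the right phrase (the Jordan--H\"older factors are always $E_0,E_1$). What is true, and is all you need, is that every proper \emph{submodule} of the preprojective $S_j$ decomposes as a direct sum of preprojectives $S_i$ with $1\leq i<j$; together with the phase monotonicity this already gives stability.
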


Applying the proof of Lemma \ref{mac319}, we have
\[
 \Theta_k\cong \cc_k :=
 \{(m_1,m_2,\phi_1,\phi_2)\in\mathbb{R}^4\,|\,
   m_i>0 \text{ and } \phi_1<\phi_2\}\,.
\]
Since it is connected and simply connected, $\stab(P_n)$ is connected
and simply connected by using of the Seifert-van Kampen theorem.

\begin{thm}
 \cite[Theorem 4.5]{mac}
 $\stab(P_n)$ is a connected and simply connected $2$-dimensional
 complex manifold.
\end{thm}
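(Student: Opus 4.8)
The plan is to reconstruct $\stab(P_n)$ from the charts $\Theta_k$ produced via Lemma~\ref{maclemma} and to read off the three assertions from their already-established properties. First, connectedness: by Proposition~\ref{macprop} we have $\stab(P_n)=\bigcup_k\Theta_k$, with three sets $\Theta_0,\Theta_1,\Theta_2$ when $n=1$ and a countable family when $n>1$; each $\Theta_k\cong\cc_k$ is nonempty, open, connected and simply connected by Lemma~\ref{mac319}; and by \cite[Proposition~4.4]{mac} every intersection $\Theta_k\cap\Theta_h$ of two distinct members equals the single orbit $O_{-1}$, so $O_{-1}\subset\Theta_j$ for every $j$ and all the $\Theta_k$ share this common nonempty set. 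Hence $\stab(P_n)$ is connected.

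Next, the complex structure. By Bridgeland's theorem (\cite[Theorem~1.2]{bri}) the central charge gives a local homeomorphism $\z:\stab(P_n)\to V$ into a linear subspace $V\subset\hom_\Z(K(\D(P_n)),\C)$. Since $K(\D(P_n))$ is free of rank $2$, the ambient group is $\cong\C^2$; and since $\z$ restricts to a local homeomorphism of the $4$-real-dimensional manifold $\Theta_k\cong\cc_k$, necessarily $V=\C^2$. Any space carrying a local homeomorphism to $\C^2$ inherits a canonical complex structure whose charts are local inverses of that map and whose transition maps are restrictions of the identity, hence biholomorphic; therefore $\stab(P_n)$ is a $2$-dimensional complex manifold (and $\z$ is holomorphic).

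It remains to prove simple connectivity, which I would do with the van Kampen theorem applied to the open cover $\{\Theta_k\}$. Each $\Theta_k$ is simply connected by Lemma~\ref{mac319}, and by \cite[Proposition~4.4]{mac} every intersection of two or more distinct members of the cover equals $O_{-1}$. The orbit $O_{-1}$ is a single $\widetilde{\mathrm{GL}^+(2,\R)}$-orbit, and the action being free it is homeomorphic to $\widetilde{\mathrm{GL}^+(2,\R)}\cong\R^4$, hence contractible, so in particular path-connected and simply connected. For $n=1$ this is the classical van Kampen theorem for the three open sets $\Theta_0,\Theta_1,\Theta_2$. For $n>1$, where the cover is countably infinite, one invokes the form of van Kampen valid for an arbitrary open cover all of whose finite intersections are path-connected; equivalently, the nerve of $\{\Theta_k\}$ is the full simplex on $\Z$ (every intersection being the nonempty $O_{-1}$) and hence contractible, and gluing simply connected opens along simply connected, path-connected intersections keeps $\pi_1$ trivial. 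Either way $\pi_1(\stab(P_n))=0$, completing the proof.

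The part I expect to demand the most care is precisely this last, infinite van Kampen argument for $n>1$: one must confirm that \emph{all} higher-order intersections, not merely the pairwise ones, are path-connected (here this is painless, since they all collapse to the single orbit $O_{-1}$ once \cite[Proposition~4.4]{mac} is available) and then use a version of the theorem beyond the textbook two- or three-set statement, e.g.\ through the fundamental groupoid or through contractibility of the nerve. Everything else is a direct quotation: the rank-$2$ computation of $K(\D(P_n))$, Bridgeland's local homeomorphism, Lemma~\ref{mac319}, Proposition~\ref{macprop}, and \cite[Proposition~4.4]{mac}.
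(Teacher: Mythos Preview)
Your proposal is correct and follows the same route as the paper: the paper simply records $\Theta_k\cong\cc_k$, notes that each $\cc_k$ is connected and simply connected, and invokes the Seifert--van Kampen theorem, leaving the details (contractibility of the common intersection $O_{-1}$, the infinite version of van Kampen for $n>1$, and the complex structure coming from Bridgeland's local homeomorphism) implicit. You have merely supplied those details explicitly, so there is nothing to correct.
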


To analyze how $\Theta_k$'s are glued to each other, we change coordinates
of $\cc_k$.

\begin{lem}
 \label{ctimesh}
 $\cc_k\cong\C\times\H$ where $\H=\{z\in\C \,|\, \im z>0\}$,
 the upper half plane.
\end{lem}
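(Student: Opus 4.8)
The plan is to make the complex structure on $\cc_k$ explicit and then exhibit the biholomorphism by hand. Recall that $\cc_k$ carries the complex structure transported from $\Theta_k\subset\stab(P_n)$ through the homeomorphism $\rho$ of (the proof of) Lemma~\ref{mac319}, and that this is the structure for which $\z\circ\rho^{-1}$ is a local biholomorphism onto an open subset of $\C^2$. Since $(S_k,S_{k+1})$ is a complete exceptional pair, $\{[S_k],[S_{k+1}]\}$ is a $\Z$-basis of $K(\D(P_n))$, so expressing a central charge through its values on this basis identifies $\z$ with a $\C$-linear isomorphism of $\C^2$; and by construction of $\rho$, a point $(m_1,m_2,\phi_1,\phi_2)\in\cc_k$ corresponds under $\rho^{-1}$ to the stability condition in which $S_k,S_{k+1}$ are stable of phases $\phi_1,\phi_2$ and with $|Z(S_k)|=m_1$, $|Z(S_{k+1})|=m_2$. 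Hence \textbf{Br4} gives
\[
 \z\circ\rho^{-1}(m_1,m_2,\phi_1,\phi_2)=\bigl(m_1e^{i\pi\phi_1},\,m_2e^{i\pi\phi_2}\bigr),
\]
so $w_j:=m_je^{i\pi\phi_j}$ ($j=1,2$) are local holomorphic coordinates on $\cc_k$ with values in $\C\setminus\{0\}$.

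Next I would introduce the globally defined functions $u_j:=\log m_j+i\pi\phi_j$ ($j=1,2$) on $\cc_k$. These are single-valued on all of $\cc_k$ (the real coordinates $m_j,\phi_j$ are global), and they are holomorphic because near any point $u_j$ is the branch of $\log w_j$ whose imaginary part is close to $\pi\phi_j$. I would then verify that
\[
 (u_1,u_2)\colon\cc_k\longrightarrow D:=\{(u_1,u_2)\in\C^2 \mid \im u_1<\im u_2\}
\]
is a biholomorphism. It is holomorphic by the previous sentence; it is bijective, with inverse sending $(u_1,u_2)$ to the point of $\cc_k$ having real coordinates $m_j=e^{\mathrm{Re}\,u_j}$, $\phi_j=\tfrac1\pi\im u_j$ (note that the defining inequality $\phi_1<\phi_2$ of $\cc_k$ corresponds exactly to $\im u_1<\im u_2$); and it is locally biholomorphic since in the $w$-coordinates it reads $(u_1,u_2)\mapsto(e^{u_1},e^{u_2})$, whose Jacobian never vanishes. (Alternatively, a holomorphic bijection between complex manifolds is automatically biholomorphic.)

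Finally, the $\C$-linear automorphism $(u_1,u_2)\mapsto(u_1,\,u_2-u_1)$ of $\C^2$ carries $D$ biholomorphically onto $\{(v,\tau)\in\C^2\mid\im\tau>0\}=\C\times\H$, and composing the two maps yields $\cc_k\cong D\cong\C\times\H$. The one place where care is needed—and the step I regard as the crux—is the transition from the local coordinates $w_j$ that are forced on us by $\z$ to the global holomorphic coordinates $u_j$: one must confirm that ``taking the logarithm'' is legitimate on the whole of $\cc_k$, which works precisely because $\cc_k$ remembers each phase $\phi_j$ as a genuine real number rather than only modulo $2$.
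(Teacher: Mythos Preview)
Your proof is correct, and the composite map you construct, $(m_1,m_2,\phi_1,\phi_2)\mapsto(u_1,\,u_2-u_1)=\bigl(\log m_1+i\pi\phi_1,\ \log(m_2/m_1)+i\pi(\phi_2-\phi_1)\bigr)$, is exactly the paper's map $\zeta_k$. The paper's own proof is a single sentence asserting that $\zeta_k$ is clearly a homeomorphism; your additional (and correct) verification that the map respects the complex structure inherited from $\stab(P_n)$ through $\z$ goes beyond what the lemma as stated or used requires, but does no harm.
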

\begin{proof}
 Define
 \[
  \zeta_k(m_1,m_2,\phi_1,\phi_2):=\left(\log(m_1)+i\pi\phi_1,\,%
\log\left(\frac{m_2}{m_1}\right)+i\pi(\phi_2-\phi_1)\right)\,.
 \]
 It is clear that $\zeta_k:\cc_k\rightarrow\C\times\H$ is a
 homeomorphism.
\end{proof}

Let us note the correspondence between elements in $\Theta_k$ and
$\C\times\H$.
For $z=a+bi\in\C$ and $w=c+di\in\H$, the corresponding stability
condition $\sigma=(Z,\P)\in\Theta_k$ is given by
\begin{equation}
 \label{equationofk}
 Z(S_k)=e^z,\,\,Z(S_{k+1})=e^{z+w},\,\,
 \phi_\sigma(S_k)=\frac{b}{\pi}\,\,\text{ and }\,\,
 \phi_\sigma(S_{k+1})=\frac{b+d}{\pi}.
\end{equation}
Conversely, for $\sigma=(Z,\P)\in\Theta_k$,
\begin{equation*}
  \begin{split}
   z&=\log\left|Z(S_k)\right|+i\pi\phi_\sigma(S_k)\,\text{ and } \\
   w&=\log\left|\frac{Z(S_{k+1})}{Z(S_k)}\right|
      +i\pi(\phi_\sigma(S_{k+1})-\phi_\sigma(S_k)).
  \end{split}
\end{equation*}

\begin{lem}
 The action of $\C$ on $\stab(P_n)$, described by Okada \cite{oka}
 (see Subsection~\ref{subsectionact}), preserves $\Theta_k$ for all $k$.
\end{lem}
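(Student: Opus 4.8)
The plan is to reduce the statement to the characterisation of $\Theta_k$ recorded in Lemma~\ref{mac4a}: a stability condition $\sigma=(Z,\P)$ belongs to $\Theta_k$ precisely when both $S_k$ and $S_{k+1}$ are stable in $\sigma$. Since the $\C$-action is part of the $\widetilde{\mathrm{GL}^+(2,\R)}$-action it already carries $\stab(P_n)$ into itself, so it is enough to show that it sends stable objects to stable objects. Applying this to $S_k$ and $S_{k+1}$ gives $z\cdot\sigma\in\Theta_k$ whenever $\sigma\in\Theta_k$, and running the same argument with $(-z)$ on $z\cdot\sigma$ (using $(-z)\cdot(z\cdot\sigma)=\sigma$) yields the reverse inclusion; hence $\Theta_k$ is preserved.

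First I would unwind Okada's formula $z\cdot(Z,\P)=(Z',\P')$, $Z'=e^z Z$, $\P'(\phi)=\P(\phi-\tfrac b\pi)$ for $z=a+bi$. The key observation is that the family of full additive subcategories $\{\P'(\phi)\}_{\phi\in\R}$ is literally the family $\{\P(\phi)\}_{\phi\in\R}$ reindexed by the shift $\phi\mapsto\phi-\tfrac b\pi$ of $\R$. In particular a nonzero object $E$ lies in $\P'(\phi)$ exactly when it lies in $\P(\phi-\tfrac b\pi)$, and it is simple in $\P'(\phi)$ exactly when it is simple in $\P(\phi-\tfrac b\pi)$, because simplicity is a property of the abelian category $\P(\psi)$ itself and does not depend on the label $\psi$. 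Therefore the set of stable objects of $z\cdot\sigma$ coincides with the set of stable objects of $\sigma$, which is what is needed.

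As a cross-check, and to make the phenomenon explicit, I would also verify the claim in the coordinates of Lemma~\ref{ctimesh}. If $\sigma\in\Theta_k$ corresponds under $\zeta_k$ to $(z_0,w_0)\in\C\times\H$, then \eqref{equationofk} gives $Z(S_k)=e^{z_0}$, $Z(S_{k+1})=e^{z_0+w_0}$, $\phi_\sigma(S_k)=\tfrac1\pi\im z_0$, $\phi_\sigma(S_{k+1})=\tfrac1\pi\im(z_0+w_0)$. Applying $z\cdot(-)$ multiplies the central charge by $e^z$ and raises the phase of every semistable object by $\tfrac b\pi$, so $z\cdot\sigma$ has $Z'(S_k)=e^{z+z_0}$, $Z'(S_{k+1})=e^{(z+z_0)+w_0}$, $\phi_{z\cdot\sigma}(S_k)=\tfrac1\pi\im(z+z_0)$ and $\phi_{z\cdot\sigma}(S_{k+1})=\tfrac1\pi\im\big((z+z_0)+w_0\big)$. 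Thus $z\cdot\sigma$ is again in the image of $\zeta_k^{-1}$, corresponding to $(z+z_0,\,w_0)$; so under $\Theta_k\cong\C\times\H$ the $\C$-action is simply translation in the first factor, and in particular preserves $\Theta_k$.

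I do not anticipate any serious obstacle. The only point demanding a moment of care is the invariance of the notion ``stable'' under reindexing of the slicing, namely the remark that $\P'(\phi)$ and $\P(\phi-\tfrac b\pi)$ are the same subcategory so that simplicity is transported verbatim; once this is in place, both the abstract argument and the coordinate computation are routine.
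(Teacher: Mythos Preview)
Your argument is correct and follows the same approach as the paper: the paper's proof is the single sentence ``The $\C$-action preserves stable objects,'' and you have simply unpacked this by invoking Lemma~\ref{mac4a} and explaining why reindexing the slicing leaves the set of stable objects unchanged. The coordinate verification you add is a harmless cross-check (and in fact anticipates formula~\eqref{actionofc}).
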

\begin{proof}
 The $\C$-action preserves stable objects.
\end{proof}

The $\C$-action on $\C\times\H$ via homeomorphism
$\Theta_k\cong\C\times\H$ is given by
\begin{equation}
 \label{actionofc}
 z'\cdot(z,w) = (z'+z,w).
\end{equation}
Therefore the orbit space is homeomorphic to $\H$.

\section{The case of one arrow}

In this section, we analyze $\stab(P_1)$.
Section~3 says
\[
 \stab(P_1)=\Theta_0\cup\Theta_1\cup\Theta_2
\]
where $\Theta_k$ are related to $(S_0,S_1)$, $(S_1,S_2)$, and
$(S_2,S_0[1])$, respectively.
The intersection $O_{-1}=\Theta_k\cap\Theta_h$ (for each $k\neq h$)
consists of all stability conditions in which $S_0$, $S_1$ and $S_2$ are
stable.
For each $\sigma=(Z,\P)\in O_{-1}$, since there is a distinguished
triangle
\[
 S_0 \longrightarrow S_1 \longrightarrow S_2
 \longrightarrow S_0[1],
\]
axioms of the stability condition induce
\begin{equation}
 \label{p11}
 Z(S_1)=Z(S_0)+Z(S_2)
\end{equation}
and
\begin{equation}
 \label{p12}
 \phi_\sigma(S_0)<\phi_\sigma(S_1)<\phi_\sigma(S_2)<
 \phi_\sigma(S_0)+1.
\end{equation}

Put $\Theta_k\cong\C_k\times\H_k$ ($k=0,1,2$) where $\C_k$ and $\H_k$
are copies of $\C$ and $\H$ (Lemma~\ref{ctimesh}).
Now we analyze how $\C_k\times\H_k$ are glued each other.

\begin{thm}
 \label{stabp1}
 Put
 \[
  \overline{\H}_k:=\{\,w\in\H\,|\, 0<\im w<\pi\,\}\ (k=0,1,2).
 \]
 Define
 $\varphi_k:\C_0\times\overline{\H}_0\rightarrow\C_k\times\overline{\H}_k$
 ($k=1,2$) as
 \begin{align*}
  \varphi_1(z,w) &:=
   \left(z+w,\,\log\frac{e^w-1}{e^w}\right) \quad\text{and} \\
  \varphi_2(z,w) &:=
   \left(z+\log(e^w-1),\,\log\frac{1}{1-e^w}\right)\,.
 \end{align*}
 Then
 \[
  \stab(P_1) \cong \left(\bigcup_{k=0}^{2}\C_k\times\H_k\right)
  \big/\sim
 \]
 where the equivalence relation is generated by
 $(z,w)\sim\varphi_k(z,w)$ for each $(z,w)\in\C_0\times\overline{\H}_0$
 and $k$.
\end{thm}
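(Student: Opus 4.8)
The plan is to combine the covering $\stab(P_1)=\Theta_0\cup\Theta_1\cup\Theta_2$ (Proposition~\ref{macprop}) with an explicit coordinate description of the pairwise intersections $\Theta_k\cap\Theta_h=O_{-1}$ (\cite[Proposition~4.4]{mac}). Concretely I would: (i) identify, inside each chart $\C_k\times\H_k\cong\Theta_k$ of Lemma~\ref{ctimesh}, the subset corresponding to $O_{-1}$; (ii) compute the transition homeomorphisms between these subsets; and (iii) conclude that $\stab(P_1)$ is the quotient of $\bigsqcup_{k=0}^{2}\C_k\times\H_k$ by the equivalence relation those transitions generate. Since $O_{-1}$ is a single honest subset of $\stab(P_1)$ and the transitions are all restrictions of the identity on it, the cocycle condition for the gluing holds automatically and no two points of a single chart get identified, so steps (i) and (ii) really do suffice.

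For step (i): $O_{-1}$ is, by the description of the intersections in Section~3/4, the locus of stability conditions in which $S_0$, $S_1$ and $S_2$ are all stable, so inside $\Theta_0$ it is the locus where $S_2$ is stable (for $S_0,S_1$ are stable throughout $\Theta_0$ by Lemma~\ref{mac4a}). Using the distinguished triangle $S_0\to S_1\to S_2\to S_0[1]$ and the analysis of \cite[Section~4]{mac}, $S_2$ is stable at a point of $\Theta_0$ precisely when $\phi_\sigma(S_0)<\phi_\sigma(S_1)<\phi_\sigma(S_0)+1$; the left inequality holds on all of $\Theta_0$, while by the definition of $\zeta_k$ in Lemma~\ref{ctimesh} together with \eqref{equationofk} one has $\im w=\pi(\phi_\sigma(S_1)-\phi_\sigma(S_0))$, so the right inequality reads $\im w<\pi$. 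Hence the image of $O_{-1}$ in $\C_0\times\H_0$ is exactly $\C_0\times\overline{\H}_0$. Since $(S_0,S_1,S_2)$ enter the triangle symmetrically and $\Theta_1,\Theta_2$ are built from $(S_1,S_2)$ and $(S_2,S_0[1])$, the same computation shows the image of $O_{-1}$ in $\C_k\times\H_k$ is $\C_k\times\overline{\H}_k$ for $k=1,2$ as well.

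For step (ii): take $\sigma\in O_{-1}$ with coordinates $(z,w)\in\C_0\times\overline{\H}_0$, so $Z(S_0)=e^{z}$ and $Z(S_1)=e^{z+w}$ by \eqref{equationofk}. From the triangle, $Z(S_2)=Z(S_1)-Z(S_0)=e^{z}(e^{w}-1)$ and $Z(S_0[1])=-e^{z}$, whence $Z(S_2)/Z(S_1)=\tfrac{e^{w}-1}{e^{w}}$ and $Z(S_0[1])/Z(S_2)=\tfrac{1}{1-e^{w}}$. Feeding these into the conversion formulas recalled in Section~3 (the inverse of $\zeta_k$), and reading off real parts as logarithms of moduli and imaginary parts as $\pi$ times the intrinsic phase differences $\phi_\sigma$, gives that the coordinates of $\sigma$ in $\C_1\times\H_1$ are $\bigl(z+w,\ \log\tfrac{e^{w}-1}{e^{w}}\bigr)=\varphi_1(z,w)$ and in $\C_2\times\H_2$ are $\bigl(z+\log(e^{w}-1),\ \log\tfrac{1}{1-e^{w}}\bigr)=\varphi_2(z,w)$. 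The branch bookkeeping is the delicate point: for $w\in\overline{\H}_0$ one has $\im e^{w}>0$, so $e^{w}-1$ also lies in the open upper half-plane (same imaginary part) and $1-e^{w}$ in the open lower half-plane; therefore the principal logarithms in $\varphi_1,\varphi_2$ are well defined and continuous, $\log(e^{w})=w$, the value $\arg(e^{w}-1)\in(0,\pi)$ strictly exceeds $\im w$, and a short check shows these imaginary parts equal $\pi(\phi_\sigma(S_2)-\phi_\sigma(S_1))$ and $\pi(\phi_\sigma(S_0[1])-\phi_\sigma(S_2))$, both in $(0,\pi)$, so that $\varphi_1,\varphi_2$ indeed land in $\C_k\times\overline{\H}_k$. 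One checks likewise that $\varphi_1,\varphi_2$ are homeomorphisms onto $\C_k\times\overline{\H}_k$, with inverse determined by $w=-\log(1-e^{w_1})$ (resp.\ the analogous formula), and that the remaining transition on $\Theta_1\cap\Theta_2$ equals $\varphi_2\circ\varphi_1^{-1}$. Combining (i)--(iii) with Proposition~\ref{macprop} then yields the claimed presentation.

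The main obstacle I anticipate is not conceptual but bookkeeping. First, one must pin down the equivalence ``$S_2$ stable in $\Theta_0$'' $\Longleftrightarrow$ ``$\im w<\pi$'': this rests on analysing the extension $S_1\to S_2\to S_0[1]$, ruling out other destabilizing subobjects, and confirming strict stability (not merely semistability) on the whole open strip while it fails on the boundary. Second, one must track the branches of the logarithms in step (ii) precisely enough to be sure $\varphi_k$ is a homeomorphism $\C_0\times\overline{\H}_0\to\C_k\times\overline{\H}_k$ reproducing the phases; everything else is routine.
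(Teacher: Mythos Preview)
Your approach is correct and is essentially the same as the paper's: identify $O_{-1}$ with $\C_k\times\overline{\H}_k$ via the phase inequality \eqref{p12}, then read off the transition maps from $Z(S_2)=Z(S_1)-Z(S_0)$ and the coordinate formulas \eqref{equationofk}. The paper's proof is considerably terser---it simply asserts that $O_{-1}$ corresponds to $\C_k\times\overline{\H}_k$ from \eqref{p12} and that the $\varphi_k$ are homeomorphisms---whereas you spell out the stability criterion for $S_2$ and the branch bookkeeping for the logarithms; this extra care is welcome but does not change the argument.
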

\begin{proof}
 Because of the inequality (\ref{p12}), the intersection
 $O_{-1}\subset\Theta_k$ corresponds to $\C_k\times\overline{\H}_k$ for
 each $k$.
 Let $(Z,\P)\in O_{-1}$ and $(z_k,w_k)\in\C_k\times\overline{\H}_k$
 ($k=0$, $1$, $2$) be elements corresponding to each other.
 The equations \eqref{equationofk} in Section~3 imply that
 \[
  e^{z_1}=Z(S_1)=e^{z_0+w_0} \quad\text{ and }\quad
  e^{z_1+w_1}=Z(S_2)=e^{z_0+w_0}-e^{z_0}
 \]
 and
 \[
  e^{z_2}=Z(S_2)=e^{z_0+w_0}-e^{z_0} \quad\text{ and }\quad
  e^{z_2+w_2}=Z(S_0[1])=-e^{z_0}
 \]
 (see Figure~\ref{zsiofp1}).
 Hence $\varphi_k$ are defined.
 It is clear that $\varphi_k$ are homeomorphisms.
\end{proof}

\begin{figure}[ht]
\begin{center}
 {\footnotesize
 \begin{picture}(80,70)(-40,-20)
  \put(0,-20){\line(0,1){60}}
  \put(-40,0){\line(1,0){80}}
  \thicklines
  \put(0,0){\vector(2,1){30}}
  \put(0,0){\vector(1,3){10}}
  \thinlines
  \dottedline{2}(0,0)(-20,15) \put(-20,15){\vector(-4,3){0}}
  \dottedline{2}(0,0)(-30,-15) \put(-30,-15){\vector(-2,-1){0}}
  \put(20,17){\makebox(0,0)[lb]{$Z(S_0)=e^{z_0}$}}
  \put(5,32){\makebox(0,0)[lb]{$Z(S_1)=e^{z_0+w_0}$}}
  \put(-10,17){\makebox(0,0)[rb]{$Z(S_2)$}}
  \put(-32,-15){\makebox(0,0)[rt]{$Z(S_0[1])$}}
  \put(0,42){\makebox(0,0)[b]{$\Theta_0$}}
 \end{picture}
 \qquad\quad
 \begin{picture}(80,70)(-40,-20)
  \put(0,-20){\line(0,1){60}}
  \put(-40,0){\line(1,0){80}}
  \thicklines
  \put(0,0){\vector(1,3){10}}
  \put(0,0){\vector(-4,3){20}}
  \thinlines
  \dottedline{2}(0,0)(30,15) \put(30,15){\vector(2,1){0}}
  \dottedline{2}(0,0)(-30,-15) \put(-30,-15){\vector(-2,-1){0}}
  \put(10,32){\makebox(0,0)[lb]{$e^{z_1}$}}
  \put(-15,17){\makebox(0,0)[b]{$e^{z_1+w_1}$}}
  \put(0,42){\makebox(0,0)[b]{$\Theta_1$}}
 \end{picture}
 \qquad\quad
 \begin{picture}(80,70)(-40,-20)
  \put(0,-20){\line(0,1){60}}
  \put(-40,0){\line(1,0){80}}
  \thicklines
  \put(0,0){\vector(-4,3){20}}
  \put(0,0){\vector(-2,-1){30}}
  \thinlines
  \dottedline{2}(0,0)(30,15) \put(30,15){\vector(2,1){0}}
  \dottedline{2}(0,0)(10,30) \put(10,30){\vector(1,3){0}}
  \put(-20,17){\makebox(0,0)[rb]{$e^{z_2}$}}
  \put(-30,-15){\makebox(0,0)[t]{$e^{z_2+w_2}$}}
  \put(0,42){\makebox(0,0)[b]{$\Theta_2$}}
 \end{picture}
 }
\end{center}
\caption{} \label{zsiofp1}
\end{figure}

\begin{thm}
 \label{thmofccn}
 Define
 $\psi_k:\overline{\H}_0\rightarrow\overline{\H}_k$ ($k=1,2$)
 as
 \[
  \psi_1(w) := \log\frac{e^w-1}{e^w}
  \quad\text{ and }\quad
  \psi_2(w) := \log\frac{1}{1-e^w}
 \]
 and define
 \[
  \cc_1:=\left(\bigcup_{k=0}^{2}\H_k\right)\big/\sim
 \]
 where the equivalence relation is generated by $w\sim\psi_k(w)$ for
 each $w\in\overline{\H}_0$ and $k$.
 Then $\stab(P_1)/\C \cong \cc_1$.
 (See Figure~\ref{c1} and Corollary~\ref{maincor}.)
\end{thm}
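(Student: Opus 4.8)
The plan is to descend the gluing description of $\stab(P_1)$ obtained in Theorem~\ref{stabp1} to the quotient by the $\C$-action, exploiting that the $\C$-action is compatible with all of the gluing data. Recall from \eqref{actionofc} that, under the identification $\Theta_k\cong\C_k\times\H_k$, the $\C$-action is $z'\cdot(z,w)=(z'+z,w)$. First I would record the two elementary facts this makes available: (i) this $\C$-action on $\C_k\times\H_k$ is free, and the projection $p_k:\C_k\times\H_k\to\H_k$, $(z,w)\mapsto w$, is an open map whose fibres are exactly the orbits, so it identifies $(\C_k\times\H_k)/\C$ with $\H_k$ (the slice $\{0\}\times\H_k$ meets each orbit once); (ii) the homeomorphism $\stab(P_1)\cong\bigl(\bigcup_{k}\C_k\times\H_k\bigr)/\!\sim$ of Theorem~\ref{stabp1} is $\C$-equivariant, where the right-hand side carries the action induced chart-by-chart by \eqref{actionofc} --- both sides being the restriction of the global $\widetilde{\mathrm{GL}^+(2,\R)}$-action, which preserves each $\Theta_k$.

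The key computational step is that each gluing homeomorphism $\varphi_k$ of Theorem~\ref{stabp1} is $\C$-equivariant and covers $\psi_k$ under the projections $p_0,p_k$. This is immediate from the formulas: $\varphi_1(z,w)=(z+w,\psi_1(w))$ and $\varphi_2(z,w)=(z+\log(e^w-1),\psi_2(w))$, so the second coordinate of $\varphi_k(z,w)$ is $\psi_k(w)$, independent of $z$, giving $p_k\circ\varphi_k=\psi_k\circ p_0$ on $\C_0\times\overline{\H}_0$; and the first coordinate has the shape $z+g_k(w)$ with $g_k$ depending only on $w$, whence $\varphi_k(z'+z,w)=z'\cdot\varphi_k(z,w)$. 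Since $\varphi_k$ is a homeomorphism $\C_0\times\overline{\H}_0\to\C_k\times\overline{\H}_k$ and the $p_k$ are open, $\psi_k:\overline{\H}_0\to\overline{\H}_k$ is a homeomorphism, matching the $\psi_k$ in the statement.

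It then remains to observe that the quotients commute in the appropriate sense: for a disjoint union of spaces carrying mutually compatible free $\C$-actions together with a $\C$-equivariant gluing relation $\sim$, one has a natural homeomorphism $\bigl((\bigsqcup_k X_k)/\!\sim\bigr)/\C\cong\bigl(\bigsqcup_k(X_k/\C)\bigr)/\bar\sim$, where $\bar\sim$ is the relation induced by $\sim$ on the orbit spaces. Here $X_k=\C_k\times\H_k$, $X_k/\C\cong\H_k$ via $p_k$, and by the previous paragraph $\bar\sim$ is generated by $w\sim\psi_k(w)$ for $w\in\overline{\H}_0$; thus the right-hand side is exactly $\cc_1$. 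Combining this with the equivariant homeomorphism of (ii) yields $\stab(P_1)/\C\cong\cc_1$.

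I expect the only point needing genuine care to be this last step --- checking that the two iterated quotients agree as \emph{topological} spaces, not merely as sets. This follows because every map in sight ($p_k$, the chart inclusions, and the quotient projections) is open, so a set in either iterated quotient is open iff its full preimage in $\bigsqcup_k\C_k\times\H_k$ is open; equivalently, one can build explicit local homeomorphisms between the two spaces using the global slices $\{0\}\times\H_k$. Everything else is bookkeeping with the three charts $k=0,1,2$, together with the observation (already in the proof of Theorem~\ref{stabp1}) that the region $O_{-1}$ corresponds in $\H_k$ precisely to $\overline{\H}_k$, so that the gluing is supported where claimed.
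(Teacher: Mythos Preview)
Your proposal is correct and follows the same approach as the paper: deduce the quotient description from Theorem~\ref{stabp1} together with the $\C$-action formula~\eqref{actionofc}. The paper's proof is a one-line appeal to these two ingredients, whereas you have spelled out the details (equivariance of $\varphi_k$, that $\varphi_k$ covers $\psi_k$, and the commutation of the two quotients); your expanded argument is faithful to the intended logic.
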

\begin{proof}
 The proof is immediately from Theorem \ref{stabp1} and the $\C$-action
 on $\C_k\times\H_k$, described in \eqref{actionofc}, Section 3.
\end{proof}

\section{The case of multiple arrows}

In this section, we analyze $\stab(P_n)$ for $n\geq 2$.
Section~3 says that, in $\D(P_n)$, there is a complete set of
exceptional objects
\[
 \{\,\dots, S_{-2},\, S_{-1},\, S_0,\, S_1,\, S_2,\,\dots\,\}
\]
where each adjacent pair $(S_k,S_{k+1})$ is the complete strong
exceptional pair and is the right mutation of $(S_{k-1},S_k)$.

\begin{df}
 \label{defofak}
 Let $n\geq 2$.
 Define a sequence of real numbers $\{a_k\}_{k\in\Z}$ as
 \begin{itemize}
  \item $a_0=0$, $a_1=1$, $a_k=na_{k-1}-a_{k-2}$ for $k\geq 2$, and
  \item $a_{-\ell}=-a_{\ell}$ for $\ell\geq 1$.
 \end{itemize}
\end{df}

\begin{lem}
 \label{eseqofpn}
 In $\D(P_n)$, there is a distinguished triangle
 \[
  S_{k-2} \longrightarrow S_{k-1}^{\oplus n} \longrightarrow S_k
 \longrightarrow S_{k-2}[1].
 \]
 Moreover, for a group homomorphism $Z:K(P_n)\rightarrow \C$,
 the following equation hold:
 \begin{equation}
  \label{pneq1}
  Z(S_k)=a_kZ(S_1)-a_{k-1}Z(S_0)
 \end{equation}
\end{lem}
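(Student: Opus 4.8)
The plan is to read everything off the distinguished triangle that \emph{defines} $S_k$ as a mutation, and then to bootstrap the one Hom-dimension that matters. Recall from Section~3 that each $(S_k,S_{k+1})$ is the right mutation of $(S_{k-1},S_k)$, so that $S_k=\mathcal{R}_{S_{k-1}}S_{k-2}$ for every $k\in\Z$. Unwinding the definition of the right mutation, this is precisely the distinguished triangle
\[
 S_{k-2}\longrightarrow\hom^\bullet(S_{k-2},S_{k-1})^{*}\otimes S_{k-1}\longrightarrow S_k\longrightarrow S_{k-2}[1].
\]
Since $(S_{k-2},S_{k-1})$ is a complete \emph{strong} exceptional pair, Lemma~\ref{mac41} tells us $\hom^\bullet(S_{k-2},S_{k-1})$ is concentrated in degree $0$ and $\hom^\bullet(S_{k-1},S_{k-2})=0$; hence the middle term of the triangle is $S_{k-1}^{\oplus d_{k-2}}$, where I write $d_j:=\dim_\C\hom^0(S_j,S_{j+1})$.

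The key step is to show $d_j=n$ for every $j$. Applying $\hom^\bullet(S_{k-1},-)$ to the triangle and using $\hom^\bullet(S_{k-1},S_{k-2})=0$ yields an isomorphism
\[
 \hom^\bullet(S_{k-1},S_k)\;\cong\;\hom^0(S_{k-2},S_{k-1})^{*}\otimes\hom^\bullet(S_{k-1},S_{k-1})\;\cong\;\hom^0(S_{k-2},S_{k-1})^{*},
\]
the last isomorphism because $S_{k-1}$ is exceptional. In particular the left-hand side sits in degree $0$ and $d_{k-1}=d_{k-2}$; running this for all $k$ in both directions gives $d_j\equiv d_0$. Finally $d_0=n$, since by the description in Section~3 $S_0[1]$ and $S_1$ are the simple representations of $P_n$ and hence $\hom^0(S_0,S_1)=\hom^1_{\D(P_n)}(S_0[1],S_1)=\ext^1(S_0[1],S_1)\cong\C^n$ because $P_n$ has $n$ arrows. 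Substituting $d_{k-2}=n$ back into the triangle gives the asserted
\[
 S_{k-2}\longrightarrow S_{k-1}^{\oplus n}\longrightarrow S_k\longrightarrow S_{k-2}[1]
\]
for all $k\in\Z$.

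For the numerical identity \eqref{pneq1}, I apply the central charge $Z$ to this triangle; additivity of $Z$ on distinguished triangles gives $n\,Z(S_{k-1})=Z(S_{k-2})+Z(S_k)$, that is $Z(S_k)=n\,Z(S_{k-1})-Z(S_{k-2})$. The sequence $\{a_k\}$ of Definition~\ref{defofak} obeys the same two-term recursion $a_k=na_{k-1}-a_{k-2}$ for all $k\in\Z$ (together with $a_0=0$, $a_1=1$ this is equivalent to $a_{-\ell}=-a_\ell$). Thus both $Z(S_k)$ and $a_kZ(S_1)-a_{k-1}Z(S_0)$ are solutions of one and the same linear recursion, and they coincide at $k=0$ (both equal $Z(S_0)$) and at $k=1$ (both equal $Z(S_1)$), hence for all $k\in\Z$ by induction in both directions. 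This proves \eqref{pneq1}.

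The step I expect to be the main obstacle is the constancy of $d_j$: one must be careful that the middle term of the mutation triangle really is $S_{k-1}^{\oplus d_{k-2}}$ with \emph{no homological shift} — this is exactly where Lemma~\ref{mac41} and strongness of the pair enter — and that applying $\hom^\bullet(S_{k-1},-)$ produces an honest isomorphism rather than merely a long exact sequence, which again relies on $\hom^\bullet(S_{k-1},S_{k-2})=0$. Once those two inputs are in hand, the remainder is a short diagram chase together with the induction in both directions on $k$.
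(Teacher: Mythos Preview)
Your argument is correct and follows essentially the same route as the paper: both read the triangle off the definition of $S_k=\mathcal{R}_{S_{k-1}}S_{k-2}$, apply $\hom^\bullet(S_{k-1},-)$ to propagate the Hom-computation along $k$, and then run the linear recursion for $Z$ against that of $\{a_k\}$. One small citation slip: the vanishing $\hom^\bullet(S_{k-1},S_{k-2})=0$ does not come from Lemma~\ref{mac41} (which for $j>i$ only says $\hom^k(S_j,S_i)$ is concentrated in degree~$1$) but from the fact that $(S_{k-2},S_{k-1})$ is an exceptional pair; with that correction your proof stands.
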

\begin{proof}
 It is well-known that
 \[
  \hom_{\D^b(P_n)}^j(S_0,S_1) \cong
  \begin{cases}
   \C^n & (j=0) \\
   0 & (\text{otherwise})
  \end{cases}.
 \]
 By the definition of the right mutation, there is a triangle
 \[
  S_0 \longrightarrow \hom_{D^b(P_n)}^\bullet(S_0,S_1)^*\otimes S_1
  =S_1^{\oplus n} \longrightarrow \mathcal{R}_{S_1}S_0=S_2.
 \]
 By applying $\hom_{\D(P_n)}^j(S_1,\quad)$, we obtain
 \[
  \hom_{\D(P_n)}^j(S_1,S_2) \cong
  \begin{cases}
   \C^n & (j=0) \\
   0 & (\text{otherwise})
  \end{cases}.
 \]

 Assume that the following condition hold:
 \[
  \hom_{\D(P_n)}^j(S_{k-2},S_{k-1})\cong
  \begin{cases}
   \C^n & (j=0) \\
   0 & (\text{otherwise})
  \end{cases}
 \]
 The definition of the right mutation induces the triangle
 \[
  S_{k-2} \longrightarrow
  \hom_{D^b(P_n)}^\bullet(S_{k-2},S_{k-1})^*\otimes S_{k-1}
  =S_{k-1}^{\oplus n} \longrightarrow
  \mathcal{R}_{S_{k-1}}S_{k-2}=S_k.
 \]
 By applying $\hom_{\D(P_n)}^j(S_{k-1},\quad)$, we obtain
 \[
  \hom_{\D(P_n)}^j(S_{k-1},S_k)\cong
  \begin{cases}
   \C^n & (j=0) \\
   0 & (\text{otherwise})
  \end{cases}.
 \]
 By induction, we obtain the triangle, which we want, for $k\geq 2$.
 For a group homomorphism $Z:K(P_n)\rightarrow\C$, the triangle induces
 \[
  Z(S_k)=nZ(S_{k-1})-Z(S_{k-2}).
 \]
 By induction, we obtain the equation~(\ref{pneq1}) for $k\geq 2$.

 To prove Lemma for $k<0$, it is enough to do similar, using the
 definition of the left mutation.
\end{proof}

The sequence $\{a_k\}_{k\geq 0}$ is described as follows;
if $n=2$,
\[
 a_0=0,\,\,a_1=1,\,\,a_2=2,\,\,\dots,\,\,a_k=k,\dots\,,
\]
and if $n>2$,
\[
 \begin{split}
  &a_0=0,\,\,a_1=1,\,\,a_2=n,\,\,a_3=n^2-1,\dots, \\
  &a_k=\frac{1}{\sqrt{n^2-4}}\left\{
  \left(\frac{n+\sqrt{n^2-4}}{2}\right)^k-
  \left(\frac{n-\sqrt{n^2-4}}{2}\right)^k\right\}
  ,\dots
 \end{split}
\]
The fractional sequences $\left\{\frac{a_{k+1}}{a_k}\right\}$ and
$\left\{\frac{a_k}{a_{k+1}}\right\}$ are convergent sequences:
\begin{equation}
\label{ak}
 \begin{split}
  &
  \frac{a_2}{a_1}>\frac{a_3}{a_2}>\dots>\frac{a_k}{a_{k-1}}
  >\frac{a_{k+1}}{a_k}>\cdots\xrightarrow{k\to\infty}
  \frac{n+\sqrt{n^2-4}}{2} \\
  &
  \frac{a_1}{a_2}<\frac{a_2}{a_3}<\dots<\frac{a_{k-1}}{a_k}
  <\frac{a_k}{a_{k+1}}<\cdots\xrightarrow{k\to\infty}
  \frac{n-\sqrt{n^2-4}}{2}
 \end{split}
\end{equation}

We now ready to analyze how $\Theta_k$'s are glued each other.
As we saw in Section~3, the space of stability conditions is a union of
$\Theta_k$'s:
\[
 \stab(P_n)\cong\bigcup_{k\in\Z}\Theta_k\,,
\]
where each $\Theta_k$ is the subset consisting of all stability
conditions in which $S_k$ and $S_{k+1}$ are stable.
$\Theta_k$ is homeomorphic to $\C_k\times\H_k$ where $\C_k$ and $\H_k$
are copies of $\C$ and $\H$ respectively.
The intersection $O_{-1}=\Theta_k\cap\Theta_h$ (for all $k\neq h$) is
the subset consisting of all stability conditions in which $S_k$ are
stable for all $k$.

\begin{thm}
 \label{stabpn}
 Define
 $\varphi_k:\C_0\times\overline{\H}_0\rightarrow\C_k\times\overline{\H}_k$
 as
 \begin{equation}
  \varphi_k(z,w):=\left(
   z+\log(a_ke^w-a_{k-1}),\,
   \log\frac{a_{k+1}e^w-a_k}{a_ke^w-a_{k-1}} \right)\,
 \end{equation}
 for each $k$.
 Then
 \[
  \stab(P_n)\cong
  \left(\bigcup_{k\in\Z}\C_k\times\H_k\right)\big/\sim
 \]
 where the equivalence relation is generated by
 $(z,w)\sim\varphi_k(z,w)$ for all
 $(z,w)\in\C_0\times\overline{\H}_0$ and $k$.
 Here, $\overline{\H}_k=\{\,w\in\H\,|\,0<\im w<1\,\}$.
\end{thm}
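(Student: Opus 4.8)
The plan is to mirror the proof of Theorem~\ref{stabp1}, the only essentially new ingredient being the arithmetic of the sequence $\{a_k\}$. By Proposition~\ref{macprop} we have $\stab(P_n)=\bigcup_{k\in\Z}\Theta_k$, and by the proposition following it $\Theta_k\cap\Theta_h=O_{-1}$ for all $k\ne h$, where $O_{-1}$ is exactly the set of stability conditions in which every $S_j$ is stable. Since the equivalence relation in the statement is generated by the single family $(z,w)\sim\varphi_k(z,w)$ attached to the chart $\C_0\times\H_0\cong\Theta_0$, transitivity identifies all pairwise overlaps (consistently with the transition maps of $\stab(P_n)$, because each $\varphi_k$ will be the transition from $\Theta_0$ to $\Theta_k$); so it suffices to (i) identify the locus $O_{-1}$ inside each chart $\C_k\times\H_k\cong\Theta_k$, and (ii) check that $\varphi_k$ is the resulting coordinate change and is a homeomorphism of the relevant open sets.

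For (i), let $\sigma=(Z,\P)\in O_{-1}$. Lemma~\ref{eseqofpn} provides distinguished triangles $S_{j-1}\to S_j^{\oplus n}\to S_{j+1}\to S_{j-1}[1]$ for all $j$; combining the $\hom$-vanishing of Lemma~\ref{mac41} with axiom Br1 and with the fact that a stable object is simple in its $\P(\phi)$ gives $\phi_\sigma(S_j)<\phi_\sigma(S_{j+1})<\phi_\sigma(S_j)+1$ for every $j$. In the chart $\Theta_k$ this reads $0<\im w_k=\pi\bigl(\phi_\sigma(S_{k+1})-\phi_\sigma(S_k)\bigr)<\pi$, i.e. $w_k\in\overline{\H}_k$. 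The reverse inclusion — that as soon as $S_k,S_{k+1}$ are stable and $\phi_\sigma(S_{k+1})-\phi_\sigma(S_k)<1$, every $S_j$ is automatically stable — is where the real work lies; I would prove it as in the one-arrow case and in Macr{\`{\i}}'s Section~4, by induction on $|j-k|$ using the mutation triangle $S_j\to S_{j+1}^{\oplus n}\to S_{j+2}\to S_j[1]$ together with the fact that $\arg\bigl(nZ(S_{j+1})-Z(S_j)\bigr)$ lies strictly between $\arg Z(S_j)$ and $\arg Z(S_j)+\pi$, so that $S_{j+2}$ is stable (the candidate destabilizing subobject $S_{j+1}^{\oplus n}\subset S_{j+2}[-1]$ has strictly smaller phase), and symmetrically for left mutations. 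This is the main obstacle. Once it is in hand, $O_{-1}$ corresponds to $\C_k\times\overline{\H}_k$ in the chart $\Theta_k$ for all $k$, and to $\C_0\times\overline{\H}_0$ when $k=0$.

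For (ii), take $\sigma\in O_{-1}$ with coordinates $(z_0,w_0)\in\C_0\times\overline{\H}_0$ and $(z_k,w_k)\in\C_k\times\overline{\H}_k$. From \eqref{equationofk} we have $e^{z_0}=Z(S_0)$, $e^{z_0+w_0}=Z(S_1)$, $e^{z_k}=Z(S_k)$, $e^{z_k+w_k}=Z(S_{k+1})$, and feeding \eqref{pneq1} into these yields
\begin{align*}
 e^{z_k}&=Z(S_k)=a_kZ(S_1)-a_{k-1}Z(S_0)=e^{z_0}\bigl(a_ke^{w_0}-a_{k-1}\bigr),\\
 e^{z_k+w_k}&=Z(S_{k+1})=a_{k+1}Z(S_1)-a_kZ(S_0)=e^{z_0}\bigl(a_{k+1}e^{w_0}-a_k\bigr).
\end{align*}
Hence $z_k=z_0+\log(a_ke^{w_0}-a_{k-1})$ and $w_k=\log\frac{a_{k+1}e^{w_0}-a_k}{a_ke^{w_0}-a_{k-1}}$, that is $(z_k,w_k)=\varphi_k(z_0,w_0)$; the branch of $\log$ is the principal one, which is consistent since $\im(z_k-z_0)=\pi\bigl(\phi_\sigma(S_k)-\phi_\sigma(S_0)\bigr)\in(-\pi,\pi)$ by step~(i).

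It remains to see that $\varphi_k\colon\C_0\times\overline{\H}_0\to\C_k\times\overline{\H}_k$ is a homeomorphism. For $w\in\overline{\H}_0$ one has $e^{w}\in\H$, hence $e^{w}\notin\R$, while $a_{k-1}/a_k\in\R$ when $a_k\ne0$ (and $a_0e^{w}-a_{-1}=a_1=1$ when $k=0$); so $a_ke^{w}-a_{k-1}$ never vanishes and $\varphi_k$ is well defined. Its $w$-component is $w\mapsto\log\bigl(M_k(e^{w})\bigr)$, where $M_k$ is the fractional linear transformation with matrix $\left(\begin{smallmatrix}a_{k+1}&-a_k\\ a_k&-a_{k-1}\end{smallmatrix}\right)$; by the recurrence $a_{k+1}=na_k-a_{k-1}$ of Definition~\ref{defofak} the determinant $a_k^2-a_{k+1}a_{k-1}$ is independent of $k$, hence equals $a_1^2-a_2a_0=1$, so the matrix lies in $\mathrm{SL}(2,\R)$ and $M_k$ maps $\H$ homeomorphically onto $\H$. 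Composing the homeomorphisms $\overline{\H}_0\xrightarrow{\exp}\H\xrightarrow{M_k}\H\xrightarrow{\log}\overline{\H}_k$ with the translation in the $z$-variable, $\varphi_k$ is a homeomorphism onto $\C_k\times\overline{\H}_k$ (and $\varphi_0=\mathrm{id}$, its matrix being the identity). By (i) and (ii), gluing the charts $\C_k\times\H_k$ along the maps $\varphi_k$ reproduces $\bigcup_{k}\Theta_k=\stab(P_n)$, as claimed.
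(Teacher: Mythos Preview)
Your proof is correct and follows the same route as the paper: identify $O_{-1}\subset\Theta_k$ with $\C_k\times\overline{\H}_k$, then read off the transition map $\varphi_k$ from \eqref{equationofk} and \eqref{pneq1}. You supply considerably more detail than the paper does---the branch-of-log discussion and the $\mathrm{SL}(2,\R)$ M\"obius argument showing that $\psi_k=\log\circ M_k\circ\exp$ is a homeomorphism $\overline{\H}_0\to\overline{\H}_k$ are both nice additions that the paper simply declares ``clear.''

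One remark on what you call ``the main obstacle,'' the reverse inclusion $\C_0\times\overline{\H}_0\subset O_{-1}$. Your inductive sketch via the mutation triangles works, but the paper (following Macr{\`\i}) sidesteps it entirely: Proposition~3.5 characterizes $O_{-1}$ as the $\widetilde{\mathrm{GL}^+(2,\R)}$-orbit of a single stability condition $\sigma_{-1}$, and since $\widetilde{\mathrm{GL}^+(2,\R)}$ acts simply transitively on pairs $\bigl(\phi_\sigma(S_0),\phi_\sigma(S_1)\bigr)$ with $0<\phi_\sigma(S_1)-\phi_\sigma(S_0)<1$ together with the positively oriented basis $\bigl(Z(S_0),Z(S_1)\bigr)$, the orbit is exactly $\C_0\times\overline{\H}_0$. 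This is what the paper means by ``it is clear (c.f.\ Lemma~\ref{eseqofpn}),'' and it avoids having to verify stability of each $S_j$ by hand. Your approach is more self-contained; theirs is quicker once Macr{\`\i}'s proposition is in hand.
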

\begin{proof}
 It is clear that $O_{-1}$ corresponds to $\C_k\times\overline{\H}_k$
 for each $k$ (c.f. Lemma~\ref{eseqofpn}).
 Let $(z,w)\in\C_0\times\overline{\H}_0$ and
 $(z',w')\in\C_k\times\overline{\H}_k$ be elements corresponding to
 $(Z,\P)\in O_{-1}$.
 From (\ref{equationofk}) in Section~3, we have $Z(S_0)=e^z$,
 $Z(S_1)=e^{z+w}$, $Z(S_k)=e^{z'}$, and $Z(S_{k+1})=e^{z'+w'}$.
 These equations and \eqref{pneq1} in Lemma~\ref{eseqofpn} imply
 \[
  e^{z'}=a_ke^{z+w}-a_{k-1}e^z \quad\text{and}\quad
  e^{w'}=\frac{a_{k+1}e^{z+w}-a_ke^z}{a_ke^{z+w}-a_{k-1}e^z} \,.
 \]
 Hence $\varphi_k$ are defined.
 It is clear that $\varphi_k$ are homeomorphisms.
\end{proof}

\begin{thm}
 \label{defofpsik}
 Define
 $\psi_k:\overline{\H}_0\rightarrow\overline{\H}_k$
 as
 \[
  \psi_k(w):=\log\frac{a_{k+1}e^w-a_k}{a_ke^w-a_{k-1}}
 \]
 for each $k$ and define
 \[
  \cc_n:=\left(\bigcup_{k\in\Z}\H_k\right)\big/\sim
 \]
 where the equivalence relation is generated by
 $w\sim\psi_k(w)$ for each $w\in\overline{\H}_0$ and $k$.
 Then $\stab(P_n)/\C \cong \cc_n$.
 (See Figure~\ref{cn} and Corollary~\ref{maincor}.)
\end{thm}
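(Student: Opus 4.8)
The plan is to obtain this as a formal consequence of Theorem~\ref{stabpn} and the description of the $\C$-action in \eqref{actionofc}, in exactly the way Theorem~\ref{thmofccn} followed from Theorem~\ref{stabp1} in the one-arrow case. First I would recall that the $\C$-action preserves every $\Theta_k$, and that under $\Theta_k\cong\C_k\times\H_k$ it is given by $z'\cdot(z,w)=(z'+z,w)$ (see \eqref{actionofc}); thus on each chart the action is free and the orbit projection is simply $(z,w)\mapsto w$, with quotient $\H_k$.

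The decisive observation is that every gluing map of Theorem~\ref{stabpn} has the form
\[
 \varphi_k(z,w)=\bigl(\,z+\log(a_ke^w-a_{k-1}),\ \psi_k(w)\,\bigr),
\]
so its second coordinate is $\psi_k(w)$, a function of $w$ alone, and its first coordinate shifts $z$ by a quantity not involving $z$. In particular each $\varphi_k$ is $\C$-equivariant, $\varphi_k(z'+z,w)=z'\cdot\varphi_k(z,w)$, and it intertwines the orbit projection $\C_k\times\H_k\to\H_k$ with $\psi_k$. Hence the $\C$-action descends to $\bigl(\bigcup_k\C_k\times\H_k\bigr)/\!\sim\;\cong\stab(P_n)$, and passing to $\C$-orbits collapses each chart $\C_k\times\H_k$ to $\H_k$ while turning the generating relation $(z,w)\sim\varphi_k(z,w)$ into $w\sim\psi_k(w)$. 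This gives
\[
 \stab(P_n)/\C\;\cong\;\Bigl(\bigcup_{k\in\Z}\C_k\times\H_k\Bigr)\big/\bigl(\sim,\ \C\bigr)\;\cong\;\Bigl(\bigcup_{k\in\Z}\H_k\Bigr)\big/\!\sim\;=\;\cc_n,
\]
and the well-definedness and homeomorphism property of each $\psi_k\colon\overline{\H}_0\to\overline{\H}_k$ is inherited from that of $\varphi_k$ in Theorem~\ref{stabpn}.

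There is no real obstacle: the whole content is the assertion that taking the $\C$-quotient commutes with the chart gluing. The single point that warrants care is verifying that no identifications among the $\H_k$ arise beyond those generated by the $\psi_k$; this is precisely what the $\C$-equivariance of the $\varphi_k$ and the freeness of the action on each chart guarantee. Concretely, one checks that the equivalence relation on $\bigcup_k\C_k\times\H_k$ generated jointly by the $\varphi_k$ and the $\C$-orbits restricts, on $\bigcup_k\H_k=\bigl(\bigcup_k\C_k\times\H_k\bigr)/\C$, to exactly the relation $\sim$ defining $\cc_n$.
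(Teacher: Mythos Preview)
Your proof is correct and follows exactly the approach in the paper: the paper's proof is a one-line appeal to Theorem~\ref{stabpn} together with the $\C$-action formula~\eqref{actionofc}, and you have simply written out the details of that deduction (the $\C$-equivariance of $\varphi_k$ and the resulting compatibility of quotienting with gluing).
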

\begin{proof}
 The proof is immediately from Theorem~\ref{stabpn} and the $\C$-action
 on $\C_k\times\H_k$, described in \eqref{actionofc}, Section~3.
\end{proof}

\section{Proof of main theorems}

In this section we study the covering map property of the local
homeomorphism $\z:\stab(P_n)\rightarrow\hom_\Z(K(P_n),\C)$.
Note that, as we mentioned in Section~3, the Grothendieck group $K(P_n)$
is a free abelian group generated by the isomorphism classes of simple
objects.
Thus we have $K(P_n)\cong\Z\langle [S_0],[S_1] \rangle$.
It is clear that $\hom_\Z(K(P_n),\C)\cong\C^2$ by mapping $Z$ to
$(Z(S_0),Z(S_1))$.

\begin{lem}
 The image of $\z:\stab(P_n)\rightarrow\hom_\Z(K(P_n),\C)$
 is homeomorphic to $\C^2\setminus\{(0,0)\}$.
\end{lem}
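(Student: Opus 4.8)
The plan is to establish the two inclusions separately, using the decomposition $\stab(P_n)=\bigcup_{k}\Theta_k$ (Proposition~\ref{macprop}), the homeomorphisms $\Theta_k\cong\C_k\times\H_k$ of Lemma~\ref{ctimesh} together with the explicit central charges \eqref{equationofk}, and the linear relation \eqref{pneq1}. Throughout, identify $\hom_\Z(K(P_n),\C)$ with $\C^2$ via $Z\mapsto(Z(S_0),Z(S_1))$, as observed just above the statement.

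\emph{The origin is not in the image.} Given $\sigma=(Z,\P)\in\stab(P_n)$, choose $k$ with $\sigma\in\Theta_k$; then $S_k$ is stable, so axiom \textbf{Br4} forces $Z(S_k)\neq 0$. By \eqref{pneq1} (and, for $n=1$, by \eqref{p11} together with $Z(S_0[1])=-Z(S_0)$), $Z(S_k)$ is an integral linear combination of $Z(S_0)$ and $Z(S_1)$, so these cannot both vanish and $\z(\sigma)\neq(0,0)$.

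\emph{The image is all of $\C^2\setminus\{(0,0)\}$ when $n\geq 2$.} First I would compute $\z(\Theta_k)$ explicitly. Under $\Theta_k\cong\C_k\times\H_k$ the stability condition attached to $(z,w)$ has $Z(S_k)=e^z$ and $Z(S_{k+1})=e^{z+w}$; since $\exp\colon\C\to\C\setminus\{0\}$ is onto and $\exp\colon\H\to\C\setminus\{0\}$ is onto (for $\rho e^{i\theta}$ take $\log\rho+i(\theta+2\pi)$), the pair $(Z(S_k),Z(S_{k+1}))$ attains every value in $(\C\setminus\{0\})^{2}$. Inverting \eqref{pneq1}, using the identity $a_k^{2}-a_{k-1}a_{k+1}=1$ (immediate by induction from $a_{k+1}=na_k-a_{k-1}$), gives $Z(S_k)=a_kZ(S_1)-a_{k-1}Z(S_0)$ and $Z(S_{k+1})=a_{k+1}Z(S_1)-a_kZ(S_0)$, whence
\[
 \z(\Theta_k)=\C^2\setminus(L_k\cup L_{k+1}),\qquad
 L_k:=\{(v_0,v_1)\in\C^2\mid a_kv_1-a_{k-1}v_0=0\}.
\]
Thus $\bigcup_k\z(\Theta_k)=\C^2\setminus\bigcap_k(L_k\cup L_{k+1})$. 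The key point is that the lines $L_k$, $k\in\Z$, are pairwise distinct: $L_k$ corresponds to the point $[a_k:a_{k-1}]\in\CP^1$, and the strict monotonicity in \eqref{ak} — with $a_{-\ell}=-a_\ell$ turning the negative-index ratios into the reciprocal family, which lies on the opposite side of $1$ — makes these projective points all different. Hence a nonzero $(v_0,v_1)$ lies on at most one $L_k$, so it lies outside $L_k\cup L_{k+1}$ for all but finitely many $k$ and therefore in some $\z(\Theta_k)$, whereas $(0,0)\in L_k$ for every $k$. So $\bigcap_k(L_k\cup L_{k+1})=\{(0,0)\}$, and the image is exactly $\C^2\setminus\{(0,0)\}$.

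\emph{The case $n=1$.} Here only $\Theta_0,\Theta_1,\Theta_2$ occur, built from $(S_0,S_1)$, $(S_1,S_2)$, $(S_2,S_0[1])$; using $Z(S_2)=Z(S_1)-Z(S_0)$ and $Z(S_0[1])=-Z(S_0)$ one computes, exactly as above, $\z(\Theta_0)=\{v_0\neq 0,\ v_1\neq 0\}$, $\z(\Theta_1)=\{v_1\neq 0,\ v_0\neq v_1\}$ and $\z(\Theta_2)=\{v_0\neq 0,\ v_0\neq v_1\}$, and a short case check on the vanishing of $v_0$, $v_1$, $v_0-v_1$ shows their union is $\C^2\setminus\{(0,0)\}$. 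Finally, the asserted homeomorphism is nothing but the identification of the subspace $\C^2\setminus\{(0,0)\}$ with itself. The routine parts (surjectivity of $\exp$ on $\H$, the determinant identity, the arithmetic of the $n=1$ case check) are painless; the one step needing genuine care is the pairwise distinctness of the infinitely many lines $L_k$ for $n\geq 2$, where \eqref{ak} is precisely the input required, together with the separate bookkeeping for $n=1$, where the covering family is finite so that the distinctness argument is unavailable.
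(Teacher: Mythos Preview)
Your argument is correct, and it takes a genuinely different route from the paper. The paper proceeds by direct construction: given $(z_0,z_1)\in\C^2\setminus\{(0,0)\}$, it exhibits an explicit Ext-exceptional pair and a stability function on the associated heart hitting that target, using only three pairs regardless of $n$ --- namely $(S_0,S_1)$ in the generic case $z_0,z_1\neq 0$, and $(S_1,S_2)$ respectively $(S_{-1},S_0)$ when $z_0=0$ respectively $z_1=0$. Your approach instead computes $\z(\Theta_k)$ for every $k$ as $\C^2\setminus(L_k\cup L_{k+1})$ and passes to the union, reducing the surjectivity to the pairwise distinctness of the lines $L_k$; for $n=1$ you handle the three charts by a finite case check. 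What the paper's method buys is economy --- three charts suffice uniformly --- and it avoids any appeal to the monotonicity in \eqref{ak}. What your method buys is that it is entirely phrased in the coordinates $\C_k\times\H_k$ already in place, it makes the exclusion of the origin explicit (the paper leaves this implicit), and the computation of $\z(\Theta_k)$ you carry out is exactly the map $g_k$ that the paper introduces immediately after the lemma, so your proof dovetails neatly with what follows.
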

\begin{proof}
 It is enough to define a stability function $Z$ on
 $\langle S_k[p_k],S_{k+1}[p_{k+1}]\rangle$, a heart generated by an
 Ext-exceptional pair, such that $Z(S_i)=z_i$ ($i=0,1$) for each
 $(z_0,z_1)\in\C^2\setminus\{(0,0)\}$.

 Suppose $z_0\neq 0$ and $z_1\neq 0$.
 We define $Z$ on $\langle S_0[p_0],S_1[p_1]\rangle$ as follows:
 We choose $\epsilon_i\in \{0,1\}$ ($i=0,1$) such that
 $(-1)^{\epsilon_i}z_i$ are both in
 $H=\{\,re^{i\pi\phi}\,|\,r>0,\,0<\phi\leq 1\,\}$.
 We define $p_0=2-\epsilon_0$, $p_1=-\epsilon_1$, and
 $Z(S_i[p_i])=(-1)^{\epsilon_i}z_i$.
 It is easy to see that $(S_0[p_0],S_1[p_1])$ is an Ext-exceptional pair
 and $Z(S_i)=z_i$ ($i=0,1$).

 Suppose $z_0=0$ and $z_1\neq 0$.
 We define $Z$ on $\langle S_1[p_1],S_2[p_2] \rangle$ as follows:
 Choose an $\epsilon\in\{0,1\}$ such that $(-1)^{\epsilon}z_1$ is in
 $H$.
 We define $p_1=2-\epsilon$ and $p_2=-\epsilon$, and define
 $Z(S_1[p_1])=(-1)^{\epsilon}z_1$ and $Z(S_2[p_2])=(-1)^{\epsilon}nz_1$.

 Suppose $z_0\neq 0$ and $z_1=0$.
 We can define $Z$ on $\langle S_{-1}[p_{-1}],S_0[p_0]\rangle$ as
 follows:
 Choose an $\epsilon\in\{0,1\}$ such that $(-1)^{\epsilon}z_0$ is in
 $H$.
 We define $p_{-1}=2-\epsilon$ and $p_0=-\epsilon$, and define
 $Z(S_{-1}[p_{-1}])=(-1)^{\epsilon}nz_0$ and
 $Z(S_0[p_0])=(-1)^{\epsilon}z_0$.

 Since there are triangles
 $S_0\rightarrow S_1^{\oplus n}\rightarrow S_2$ and
 $S_{-1}\rightarrow S_0^{\oplus n}\rightarrow S_1$,
 it is also easy to see that $Z(S_0)=z_0$ and $Z(S_1)=z_1$.
\end{proof}

Let $g_k$ be a map defined in the diagram:
\[
 \xymatrix{
  \Theta_k \ar[r]^-{f}_-{\cong} \ar[d]_{\z}
  & \C_k\times\H_k \ar@{..>}[d]_{g_k} \\
  \hom_\Z(K(P_n),\C) \ar[r]^-{h}_-{\cong} & \C^2
 }
\]
The following equalities are easy to see:
\begin{align*}
 g_k(z,w)&=\left(
  \frac{a_ke^{z+w}-a_{k+1}e^z}{a_{k-1}a_{k+1}-a_k^2},
  \frac{a_{k-1}e^{z+w}-a_ke^z}{a_{k-1}a_{k+1}-a_k^2}
 \right) \\
 g_k(z'+z,w)&=e^{z'}\cdot g_k(z,w)
 \quad (\text{$g_k$ commutes with $\C$-actions.}) \\
 g_k\circ\varphi_k(z,w)&=(e^z,e^{z+w})
\end{align*}
Therefore the diagram
\[
 \xymatrix{
  \C \ar[d]_{\exp} \ar[r]
  & \bigcup_{k\in\Z}\C_k\times\H_k/_\sim \ar[d]_{g} \ar[r]
  & \bigcup_{k\in\Z}\H_k/_\sim \ar[d]_{\chi_n} \\
  \C^\times \ar[r]
  & \C^2\setminus\{(0,0)\} \ar[r]^-{\pi}
  & \CP^1
 }
\]
is commutative and both rows are principal fiber bundles.
This shows the theorem:
\begin{thm}
 We obtain the commutative diagram
 \[
  \xymatrix{
   \C \ar[d]_{\exp} \ar[r] & \stab(P_n) \ar[d]_{\z} \ar[r]
   & \cc_n \ar[d]_{\chi_n} \\
   \C^\times \ar[r] & \C^2\setminus\{(0,0)\} \ar[r]^-{\pi}
   & \CP^1
  }
 \]
 where both rows are principal fiber bundles.
\end{thm}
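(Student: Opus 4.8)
The plan is to promote the locally defined maps $g_k$ to a single global map $g$ and then read off the bundle structure of each row.

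\textbf{Step 1: the formula for $g_k$.} Under $\Theta_k\cong\C_k\times\H_k$ a point has $Z(S_k)=e^z$ and $Z(S_{k+1})=e^{z+w}$, whereas $h\circ\z$ records $(Z(S_0),Z(S_1))$. Relation \eqref{pneq1} of Lemma~\ref{eseqofpn} gives $Z(S_k)=a_kZ(S_1)-a_{k-1}Z(S_0)$ and $Z(S_{k+1})=a_{k+1}Z(S_1)-a_kZ(S_0)$; the determinant $a_{k-1}a_{k+1}-a_k^2$ of this $2\times 2$ system equals $-1$ for every $k$ (a one-line induction from $a_k=na_{k-1}-a_{k-2}$, or a short computation with the closed form of $a_k$). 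Solving for $(Z(S_0),Z(S_1))$ yields precisely the stated formula for $g_k$, so that $h\circ\z=g_k\circ f$ on $\Theta_k$ and, in particular, $g_0(z,w)=(e^z,e^{z+w})$.

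\textbf{Step 2: the structural identities and the global map $g$.} Each of the identities $g_k(z'+z,w)=e^{z'}\,g_k(z,w)$ and $g_k\circ\varphi_k(z,w)=(e^z,e^{z+w})=g_0(z,w)$ on $\C_0\times\overline{\H}_0$ is a direct substitution into the formula for $g_k$ together with the formula for $\varphi_k$ in Theorem~\ref{stabpn}. The second identity says the $g_k$ are compatible with the relation $(z,w)\sim\varphi_k(z,w)$ that generates $\stab(P_n)\cong(\bigcup_k\C_k\times\H_k)/\!\sim$ (every identification factors through $O_{-1}\cong\C_0\times\overline{\H}_0$), so the $g_k$ glue to a continuous map $g\colon\stab(P_n)\to\C^2\setminus\{(0,0)\}$, which under the identifications is $h\circ\z$. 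The first identity says $g$ intertwines the $\C$-action \eqref{actionofc} on $\stab(P_n)$ with the scaling $\C^\times$-action on $\C^2\setminus\{(0,0)\}$ through $\exp\colon\C\to\C^\times$; since $\stab(P_n)/\C\cong\cc_n$ (Theorem~\ref{defofpsik}) and $(\C^2\setminus\{(0,0)\})/\C^\times=\CP^1$, the map $g$ descends to $\chi_n\colon\cc_n\to\CP^1$, and this makes both squares of the diagram commute, the left square being exactly the fiberwise $\exp$-equivariance of $g$.

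\textbf{Step 3: the rows are principal bundles.} The bottom row is the tautological principal $\C^\times$-bundle $\C^\times\to\C^2\setminus\{(0,0)\}\to\CP^1$. For the top row, the quotient map $q\colon\stab(P_n)\to\cc_n$ comes from the $\C$-action, which preserves each $\Theta_k$ and is free there, hence free, and which is well defined on $(\bigcup_k\C_k\times\H_k)/\!\sim$ because $\varphi_k(z'+z,w)=z'\cdot\varphi_k(z,w)$. Since each $\Theta_k$ is open and $\C$-invariant, $q(\Theta_k)\cong\H_k$ is open in $\cc_n$ and $q$ restricts over it to the trivial projection $\C_k\times\H_k\to\H_k$, so the top row is a locally trivial principal $\C$-bundle. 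Combined with Step~2, $g$ is then a morphism of principal bundles covering $\exp\colon\C\to\C^\times$ over the base map $\chi_n$, which is the assertion of the theorem. I do not expect a genuine obstacle here; the hardest part is really just bookkeeping, namely the never-vanishing denominator $a_{k-1}a_{k+1}-a_k^2=-1$ and the check that the local $g_k$ patch consistently, both of which come down to the single identity $g_k\circ\varphi_k=g_0$. The case $n=1$ runs in the same way, with Theorem~\ref{stabp1} and its two maps $\varphi_1,\varphi_2$ replacing $\{\varphi_k\}_{k\in\Z}$ (and $a_0=0$, $a_1=1$, $a_2=1$, $a_3=0$).
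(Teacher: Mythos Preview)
Your proposal is correct and follows essentially the same approach as the paper: define the local maps $g_k$ from the relation \eqref{pneq1}, verify the identities $g_k(z'+z,w)=e^{z'}g_k(z,w)$ and $g_k\circ\varphi_k=g_0$, and conclude that the diagram commutes with both rows principal bundles. You supply more detail than the paper does (the computation $a_{k-1}a_{k+1}-a_k^2=-1$, the explicit gluing of the $g_k$, and the local-triviality argument for the top row), but the strategy is identical.
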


Before proving main theorem, we describe $\chi_n$.
The composition of $g_k$ and $\pi$ is
\[
 \pi\circ g_k(z,w)=
 \left[
  \frac{a_ke^{z+w}-a_{k+1}e^z}{a_{k-1}a_{k+1}-a_k^2}\,:\,
  \frac{a_{k-1}e^{z+w}-a_ke^z}{a_{k-1}a_{k+1}-a_k^2}
 \right]
\]
for $(z,w)\in\C_k\times\H_k$.
Hence
\[
 \chi_n(w)=\left[a_ke^w-a_{k+1}:a_{k-1}e^w-a_k\right]
\]
for $w\in\H_k$.
Note that $\chi_n(w)=[1:e^w]$ if $w\in\H_0$.

\begin{proof}
[Proof of Theorem \ref{main1}]
\underline{$n=1$.} Note that
\begin{align*}
 g_0(z_0,w_0)&=(e^{z_0},e^{z_0+w_0}) \\
 g_1(z_1,w_1)&=(e^{z_1}-e^{z_1+w_1},e^{z_1}) \\
 g_2(z_2,w_2)&=(-e^{z_2+w_2},e^{z_2}-e^{z_2+w_2})
\end{align*}
where $g_k:\C_k\times\H_k\rightarrow\C^2$.
Hence
\[
 \chi_1(w)=
 \begin{cases}
  [1:e^w] & (\text{if}\ w\in\H_0) \\
  [1-e^w:1] & (\text{if}\ w\in\H_1) \\
  [e^w:e^w-1] & (\text{if}\ w\in\H_2)\,.
 \end{cases}
\]

The projective special linear group $\mathrm{PSL}(2,\C)$ acts on
$\CP^1$.
Let $G_1=\begin{bmatrix} 0&1 \\ -1&1 \end{bmatrix}\in\mathrm{PSL}(2,\C)$,
of which order is $3$.
It is clear that
\[
 G_1\cdot[1:e^w]=[e^w:e^w-1]
 \quad\text{and}\quad
 G_1^2\cdot[1:e^w]=[1-e^w:1],
\]
that is $\chi_1(\H_2)=G_1\cdot\chi_1(\H_0)$ and
$\chi_1(\H_1)=G_1^2\cdot\chi_1(\H_0)$.
It is also clear that
\[
 \begin{cases}
  G_1\cdot[1:0]=[0:1] \\
  G_1\cdot[0:1]=[1:1]
 \end{cases}
 \quad\text{and}\quad
 \begin{cases}
  G_1^2\cdot[1:0]=[1:1] \\
  G_1^2\cdot[0:1]=[1:0]
 \end{cases}.
\]
Therefore $\chi_1$ wraps each upper half plane $\H_k$ on $\CP^1$
such that
\begin{itemize}
 \item wraps $\H_0$ on $\CP^1$ around points $[0:1]$ and $[1:0]$,
 \item wraps $\H_1$ on $\CP^1$ around points $[1:1]$ and $[1:0]$, and
 \item wraps $\H_2$ on $\CP^1$ around points $[1:1]$ and $[0:1]$.
\end{itemize}
See Figure \ref{chi1} for detail.
Moreover it is immediately follows that $\chi_1$ is a covering map if it
is restricted to the inverse image of
\begin{equation}
\label{cp1chi1}
 \mathcal{CP}_1=\CP^1\setminus\{[1:0],\,[0:1],\,[1:1]\}.
\end{equation}

\underline{$n=2$.} Since $a_k=k$,
\[
 \chi_2(w)=[ke^w-(k+1):(k-1)e^w-k]
\]
if $w\in\H_k$.
Let $G_2=\begin{bmatrix} 0&1 \\ -1&2 \end{bmatrix}\in\mathrm{PSL}(2,\C)$,
the power of which is
\[
 G_2^k=
 \begin{bmatrix} -k+1&k \\ -k&k+1 \end{bmatrix}
\]
for each $k\in\Z$.
It is clear that
\begin{align*}
 G_2^{-k}\cdot [1:e^w]
 &=[(k+1)-ke^w:k+(-k+1)e^w] \\
 &=[ke^w-(k+1):(k-1)e^w-k],
\end{align*}
that is $\chi_2(\H_k)=G_2^{-k}\cdot\chi_2(\H_0)$.
It is also clear that
\[
 \begin{cases}
  G_2^{-k}\cdot[1:0]=[k+1:k] \quad \text{and} \\
  G_2^{-k}\cdot[0:1]=[-k:-k+1]=[k:k-1].
 \end{cases}
\]
Therefore $\chi_2$ wraps
\begin{itemize}
 \item $\H_0$ on $\CP^1$ around points $[0:1]$ and $[1:0]$ and
 \item $\H_k$ on $\CP^1$ around points $[k:k-1]$ and $[k+1:k]$.
\end{itemize}
(Figure~\ref{chi2}.) Note that
\begin{itemize}
 \item
$\displaystyle\lim_{k\to\infty}[k+1:k]=\lim_{k\to\infty}[k:k+1]=[1:1]$
      and
 \item $[1:1]$ is the fixed point.
\end{itemize}
It is immediately that $\chi_2$ is a covering map if it is restricted to
\begin{equation}
\label{cp1chi2}
 \mathcal{CP}_2=\CP^1\setminus\{[1:1],\,[k:k+1]\,|\,k\in\Z\}.
\end{equation}

\underline{$n>2$.} Let
$G_n=\begin{bmatrix} 0&1 \\ -1&n \end{bmatrix}\in\mathrm{PSL}(2,\C)$,
the power of which is
\[
 G_n^k=
 \begin{bmatrix} -a_{k-1}&a_k \\ -a_k&a_{k+1} \end{bmatrix}
\]
for all $k\in\Z$.
It is clear that
\[
 G_n^{-k}\cdot[1:e^w]=[a_ke^w-a_{k+1}:a_{k-1}e^w-a_k],
\]
that is $\chi_n(\H_k)=G_n^{-k}\cdot\chi_n(\H_0)$.
It is also clear that
\[
 \begin{cases}
  G_n^{-k}\cdot[1:0]=[a_{k+1}:a_k]
  \quad \text{and} \\
  G_n^{-k}\cdot[0:1]=[a_k:a_{k-1}].
 \end{cases}
\]
Therefore $\chi_n$ wraps
\begin{itemize}
 \item $\H_0$ on $\CP^1$ around points $[0:1]$ and $[1:0]$ and
 \item $\H_k$ on $\CP^1$ around points $[a_k:a_{k-1}]$ and
       $[a_{k+1}:a_k]$.
\end{itemize}
(Figure~\ref{chi2}.) Note that, from (\ref{ak}),
\begin{itemize}
 \item
$\displaystyle\lim_{k\to\infty}[a_{k+1}:a_k]=%
\left[1:\frac{n-\sqrt{n^2-4}}{2}\right]$,
 \item
$\displaystyle\lim_{k\to\infty}[a_k:a_{k+1}]=%
\left[1:\frac{n+\sqrt{n^2-4}}{2}\right]$,
 \item $\displaystyle\left[1:\frac{n\pm\sqrt{n^2-4}}{2}\right]$ are the
       fixed points, and
 \item $G_n$ preserves the arc
$\displaystyle\left\{ [1:\lambda] \,\left|\, \frac{n-\sqrt{n^2-4}}{2}\leq%
\lambda\leq\frac{n+\sqrt{n^2-4}}{2}\right.\right\}$.
\end{itemize}
Let $\alpha=[1:\lambda]$ in the arc and let $U$ be any neighborhood of
$\alpha$ in $\CP^1$.
There is the connected component $V\subset\chi_n^{-1}(U)$ such that
$V\in\H_0$ and $V$ is the neighborhood of $\log\lambda$ in the boundary
of $\overline{\H_0}$.
The above notations show the $\chi_n|_V$ is not homeomorphism.
Though $\chi_n$ is a covering map if it is restricted to
\begin{equation}
\label{cp1chin}
 \mathcal{CP}_n=\CP^1\setminus\left\{
  \begin{array}{l}
   [a_k:a_{k+1}]\,\,(k\in\Z), \\
   \text{arc from %
   {\small $\left[1:\frac{n-\sqrt{n^2-4}}{2}\right]$} to
   {\small $\left[1:\frac{n+\sqrt{n^2-4}}{2}\right]$}}
  \end{array}
 \right\}
\end{equation}
\end{proof}

\begin{proof}
[Proof of Corollary~\ref{main2}]
 This is a corollary to Theorem \ref{main1}.
 The inverse images of (\ref{cp1chi1}), (\ref{cp1chi2}), and
 (\ref{cp1chin}) via $\pi:\C^2\setminus\{(0,0)\}\rightarrow\CP^1$ are
 given as follows:
 \begin{align*}
  \pi^{-1}(\mathcal{CP}_1)&=
  \C^2\setminus\{z_1=0\}\cup\{z_2=0\}\cup\{z_1=z_2\}, \\
  \pi^{-1}(\mathcal{CP}_2)&=
  \C^2\setminus
  \{z_2=z_1\}\cup\{kz_2=(k+1)z_1\,|\,k\in\Z\}, \\
  \pi^{-1}(\mathcal{CP}_n)&=
  \C^2\setminus
  \{a_kz_2=a_{k+1}z_1\,|\,k\in\Z\} \\
  &\qquad\qquad \cup
  \left\{z_2=\lambda z_1\,\left|\,\lambda\in\R,\,
  \frac{n-\sqrt{n^2-4}}{2}\leq\lambda\leq\frac{n+\sqrt{n^2-4}}{2}
  \right.\right\}.
 \end{align*}
 Then the map $\z:\stab(P_n)\rightarrow\C^2$ is a covering map if
 it is restricted to each of these subspaces.
\end{proof}

\bibliographystyle{alpha}

\begin{thebibliography}{Mac07}
\bibitem[Bon89]{bon}
A.~I. Bondal.
\newblock Representations of associative algebras and coherent sheaves.
\newblock {\em Izv. Akad. Nauk SSSR Ser. Mat.}, 53(1):25--44, 1989.
\bibitem[Bri07]{bri}
Tom Bridgeland.
\newblock Stability conditions on triangulated categories.
\newblock {\em Ann. of Math. (2)}, 166(2):317--345, 2007.
\bibitem[Bri08]{bric}
Tom Bridgeland.
\newblock Stability conditions on {$K3$} surfaces.
\newblock {\em Duke Math. J.}, 141(2):241--291, 2008.
\bibitem[Bri09]{brib}
Tom Bridgeland.
\newblock Stability conditions and {K}leinian singularities.
\newblock {\em Int. Math. Res. Not. IMRN}, (21):4142--4157, 2009.
\bibitem[BT11]{bt}
Christopher Brav and Hugh Thomas.
\newblock Braid groups and {K}leinian singularities.
\newblock {\em Math. Ann.}, 351(4):1005--1017, 2011.
\bibitem[CB92]{boe}
William Crawley-Boevey.
\newblock Exceptional sequences of representations of quivers.
\newblock In {\em Proceedings of the {S}ixth {I}nternational {C}onference on
  {R}epresentations of {A}lgebras ({O}ttawa, {ON}, 1992)}, volume~14 of {\em
  Carleton-Ottawa Math. Lecture Note Ser.}, page~7, Ottawa, ON, 1992. Carleton
  Univ.
\bibitem[Dou02]{douc}
Michael~R. Douglas.
\newblock Dirichlet branes, homological mirror symmetry, and stability.
\newblock In {\em Proceedings of the {I}nternational {C}ongress of
  {M}athematicians, {V}ol. {III} ({B}eijing, 2002)}, pages 395--408, Beijing,
  2002. Higher Ed. Press.
\bibitem[IUU10]{iuu}
Akira Ishii, Kazushi Ueda, and Hokuto Uehara.
\newblock Stability conditions on {$A_n$}-singularities.
\newblock {\em J. Differential Geom.}, 84(1):87--126, 2010.
\bibitem[Kra07]{kra}
Henning Krause.
\newblock Derived categories, resolutions, and brown representability.
\newblock In {\em Interactions between homotopy theory and algebra}, volume 436
  of {\em Contemp. Math.}, pages 101--139. Amer. Math. Soc., Providence, RI,
  2007.
\bibitem[Mac07]{mac}
Emanuele Macr{\`{\i}}.
\newblock Stability conditions on curves.
\newblock {\em Math. Res. Lett.}, 14(4):657--672, 2007.
\bibitem[Oka06]{oka}
So~Okada.
\newblock Stability manifold of {${\Bbb P}^1$}.
\newblock {\em J. Algebraic Geom.}, 15(3):487--505, 2006.
\bibitem[Rin94]{rin}
Claus~Michael Ringel.
\newblock The braid group action on the set of exceptional sequences of a
  hereditary {A}rtin algebra.
\newblock In {\em Abelian group theory and related topics ({O}berwolfach,
  1993)}, volume 171 of {\em Contemp. Math.}, pages 339--352. Amer. Math. Soc.,
  Providence, RI, 1994.
\bibitem[ST01]{st}
Paul Seidel and Richard Thomas.
\newblock Braid group actions on derived categories of coherent sheaves.
\newblock {\em Duke Math. J.}, 108(1):37--108, 2001.
\bibitem[Tho06]{tho}
R.~P. Thomas.
\newblock Stability conditions and the braid group.
\newblock {\em Comm. Anal. Geom.}, 14(1):135--161, 2006.
\end{thebibliography}

\end{document}